\theoremstyle{definition}
\newtheorem{pr}{Problem}
\newtheorem{thm}{Theorem}[section]
\newtheorem{cor}[thm]{Corollary}
\newtheorem{prop}[thm]{Proposition}
\newtheorem{lem}[thm]{Lemma}
\newtheorem*{thm*}{Theorem}
\newtheorem*{notn*}{Notation}
\theoremstyle{definition}
\newtheorem{dfn}[thm]{Definition}
\newtheorem{notn}[thm]{Notation}
\theoremstyle{definition}
\newtheorem{rem}[thm]{Remark}
\newcommand{\N}{\mathbb{N}}
\newcommand{\T}{\mathcal{T}}
\newcommand{\tT}{{\widetilde{\T}}}
\renewcommand{\tt}{{\tilde{t}}}
\newcommand{\ts}{{\tilde{s}}}
\newcommand{\tW}{{\widetilde{\W}}}
\newcommand{\M}{\mathcal{M}}
\newcommand{\W}{\mathcal{W}}
\renewcommand{\S}{\mathcal{S}}
\newcommand{\supp}{\textnormal{supp}}
\newcommand{\range}{\textnormal{range}}
\renewcommand{\-}{\textnormal{-}}
\DeclareRobustCommand{\rchi}{{\mathpalette\irchi\relax}}
\newcommand{\irchi}[2]{\raisebox{\depth}{$#1\chi$}}
\begin{document}

\title[non-asymptotic $\ell_1$ spaces with unique $\ell_1$ asymptotic model]{ non-asymptotic $\ell_1$ spaces with unique $\ell_1$ asymptotic model  }

\author[S. A. Argyros]{Spiros A. Argyros}
\address{National Technical University of Athens, Faculty of Applied Sciences,
Department of Mathematics, Zografou Campus, 157 80, Athens, Greece}
\email{sargyros@math.ntua.gr}

\author[A. Georgiou]{Alexandros Georgiou}
\address{National Technical University of Athens, Faculty of Applied Sciences,
Department of Mathematics, Zografou Campus, 157 80, Athens, Greece}
\email{ale.grgu@gmail.com}

\author[P. Motakis]{Pavlos Motakis}
\address{Department of Mathematics, University of Illinois at Urbana-Champaign, Urbana, IL 61801, U.S.A.}
\email{pmotakis@illinois.edu}

\thanks{{\em 2010 Mathematics Subject Classification:} Primary 46B03, 46B06, 46B25, 46B45.}
\thanks{The third author's research was supported by the National Science Foundation under Grant Numbers NSF DMS-1600600 and NSF DMS-1912897.}

\begin{abstract}
A recent result of Freeman, Odell, Sari, and Zheng  \cite{FOSZ} states that whenever a separable Banach space not containing $\ell_1$ has the property that all asymptotic models generated by weakly null sequences are equivalent to the unit vector basis of $c_0$ then the space is Asymptotic $c_0$. We show that if we replace $c_0$ with $\ell_1$ then this result is no longer true. Moreover, a stronger result of B. Maurey - H. P. Rosenthal \cite{MR} type is presented, namely, there exists a reflexive Banach space with an unconditional basis admitting $\ell_1$ as a unique asymptotic model whereas any subsequence of the basis generates a non-Asymptotic $\ell_1$ subspace.
\end{abstract}

\maketitle

\section{Introduction}
In this paper we study the question whether the uniqueness of asymptotic models, or equivalently, the uniform uniqueness of joint spreading models in a given Banach space implies that the space must be Asymptotic $\ell_p$. This is a coordinate free version from \cite{MMT} of the notion of asymptotic $\ell_p$ spaces with a Schauder basis by Milman and Tomczak-Jaegermann from \cite{MT}. The question draws its motivation from the following Problem of Halbeisen and Odell from \cite{HO} and a subsequent remarkable result from \cite{FOSZ}. Given a Banach space $X$, let $\mathscr{F}_0(X)$ denote the class of normalized weakly null sequences and $\mathscr{F}_b(X)$ denote the class of  all normalized block sequences of a fixed basis, if $X$ has one.

\begin{pr}[\cite{HO}]
\label{asmodelasellp}
Let $X$ be a Banach space that admits a unique asymptotic model with respect to $\mathscr{F}_0(X)$, or with respect to $\mathscr{F}_b(X)$ if $X$ has a basis. Does $X$ contain an Asymptotic $\ell_p$, $1\le p<\infty$, or an Asymptotic $c_0$ subspace?
\end{pr}

An asymptotic model is a notion which describes the asymptotic behavior of an array of sequences $(x_j^{i})_j$, $i\in\N$. On the contrary a space is Asymptotic $\ell_p$, for $1\le p<\infty$, (resp. Asymptotic $c_0$) if the asymptotic behavior of the whole space resembles that of $\ell_p$ (resp. $c_0$). Remarkably, in some cases unique asymptotic array structure implies that a space is Asymptotic $c_0$.

\begin{thm}[\cite{FOSZ}]\label{theorem FOSZ}
Let $X$ be a separable Banach space that does not contain $\ell_1$ and admits a unique $c_0$ asymptotic model with respect to $\mathscr{F}_0(X)$. Then the space $X$ is Asymptotic $c_0$.
\end{thm}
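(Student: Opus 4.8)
\noindent The plan is to argue by contradiction: assuming $X$ is \emph{not} Asymptotic $c_0$, and using $\ell_1\not\hookrightarrow X$ together with the hypothesis on asymptotic models, I would build a normalized weakly null array whose asymptotic model is not equivalent to the unit vector basis of $c_0$. Two preliminary moves set the stage. First, since $\ell_1\not\hookrightarrow X$, Rosenthal's $\ell_1$-theorem gives that every bounded sequence in $X$ has a weakly Cauchy subsequence; via differences and sliding-hump perturbations one can then produce normalized weakly null sequences inside any prescribed weak neighbourhood of $0$, and (using separability, e.g.\ after embedding $X$ into a space with a bimonotone FDD, or by working with the coordinate-free asymptotic game on weak neighbourhoods of $0$) the Maurey--Milman--Tomczak-Jaegermann asymptotic structure $\{X\}_n$ is realized, up to $1+\varepsilon$, along finite \emph{successive weakly null} families as far out as one wishes, with uniformly bounded basis constants. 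Second, a compactness/diagonal argument upgrades the hypothesis to its uniform form (``uniform uniqueness of joint spreading models''): there is $C\ge 1$ such that \emph{every} asymptotic model of a normalized weakly null array of $X$ is $C$-equivalent to the $c_0$ basis --- otherwise an amalgamation of arrays with exploding $c_0$-constants would itself generate an asymptotic model not equivalent to the $c_0$ basis.

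\noindent Now suppose $X$ is not Asymptotic $c_0$, and put $C_k:=\sup\big\{\,\text{the }\ell_\infty^k\text{-equivalence constant of }(e_i)_{i=1}^{k}\ :\ (e_i)_{i=1}^{k}\in\{X\}_k\,\big\}$; the failure of Asymptotic $c_0$ means exactly $\sup_k C_k=\infty$. Fix $k_1<k_2<\cdots$ with $C_{k_r}\to\infty$. For each $r$ pick a near-extremal configuration in $\{X\}_{k_r}$; a convexity argument lets us replace arbitrary coefficients by signs, and the game description of $\{X\}_{k_r}$ --- running the subspace player through tail weak neighbourhoods and keeping the vectors weakly null via Rosenthal --- lets us realize it, up to $1+1/j$ and as far out as we please, by successive normalized weakly null vectors; absorbing the signs into those vectors we get, for each $r$ and each large $N$, successive normalized weakly null $x^{(r)}_1<\cdots<x^{(r)}_{k_r}$ beginning past $N$ with $\|\sum_{i=1}^{k_r}x^{(r)}_i\|>C_{k_r}-1$. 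Moreover, having produced $x^{(r)}_1,\dots,x^{(r)}_i$, one may require $x^{(r)}_{i+1}$ to lie in any prescribed weak neighbourhood of $0$ and to begin arbitrarily far out.

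\noindent \emph{Assembling the array.} Arrange the rows of the array into consecutive blocks $B_1,B_2,\dots$ with $|B_r|=k_r$. Inside $B_r$ take the $k_r$ rows to be normalized weakly null sequences whose far-out terms reproduce, with ever-improving accuracy, the length-$k_r$ violation above: whenever $j_1<\cdots<j_{k_r}$ are large, the corresponding vectors of the $B_r$-rows form a small perturbation of some $x^{(r)}_1<\cdots<x^{(r)}_{k_r}$, so in particular their sum has norm $>C_{k_r}-1$; arrange also that successive blocks live far out relative to one another. A Ramsey/stabilization step, interwoven with the choice of the rows (and available by separability), ensures that the whole array $(z^i_j)$ generates an asymptotic model $(f_i)$. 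Since the norm of any vector supported on the indices of a single block $B_r$ is a limit of norms of sums of the corresponding row-vectors, $\|\sum_{\ell\in B_r}f_\ell\|\ge C_{k_r}-1\to\infty$; hence $(f_i)$ is not equivalent to the $c_0$ basis, which contradicts the uniformized hypothesis. Therefore $X$ is Asymptotic $c_0$. The point is that an \emph{array} lets one devote a fresh block of rows to each long violation and never reconcile the mutually incompatible initial structures of the different violations --- a freedom a single spreading model does not have, and exactly the reason the $\ell_1$-analogue built in this paper can fail.

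\noindent \emph{Main obstacle.} The delicate points are the two bridges between the two ``asymptotic'' frameworks. One is the passage from the asymptotic game to weakly null sequences: turning the game-theoretic ``successive/admissible, far out'' condition into honest normalized weakly null families that realize $\{X\}_k$ --- this is where $\ell_1\not\hookrightarrow X$ and Rosenthal's theorem are indispensable, and where one must reconcile the game's notion of ``far out'' with the ``eventually, for $j_1<\cdots<j_k$ large'' quantifier defining asymptotic models. The other is running the Ramsey/stabilization that produces $(f_i)$ \emph{during} the block-by-block construction rather than afterwards, so that the finished array genuinely generates an asymptotic model while each block still witnesses its prescribed $c_0$-violation; keeping the weak-nullity of every row intact (sliding hump) and controlling the cross-block interactions are the bookkeeping core of the argument.
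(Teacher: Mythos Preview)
The paper does not contain a proof of this theorem. It is quoted from \cite{FOSZ} as motivation --- the paper's purpose is precisely to show that the analogous statement with $\ell_1$ in place of $c_0$ \emph{fails}, so the authors state the Freeman--Odell--Sari--Zheng result and move on. There is therefore no ``paper's own proof'' against which to compare your proposal.

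That said, your outline tracks the actual strategy of \cite{FOSZ} reasonably well at the schematic level: uniformize the hypothesis, assume failure of Asymptotic $c_0$, realize the bad $\{X\}_k$-configurations along weakly null families (using $\ell_1\not\hookrightarrow X$), and assemble them into a single weakly null array whose asymptotic model has unbounded partial sums. You have correctly identified the two genuine difficulties --- passing from the cofinite-subspace game to weakly null sequences, and interleaving the stabilization with the block-by-block construction --- but what you have written is a plan rather than a proof. In particular, the sentence ``Inside $B_r$ take the $k_r$ rows to be normalized weakly null sequences whose far-out terms reproduce, with ever-improving accuracy, the length-$k_r$ violation above'' is doing almost all of the work and hides the main technical content: one must build, for each row $i$ in block $B_r$, an entire \emph{weakly null sequence} $(z^i_j)_j$, not just a single vector, such that \emph{any} choice of far-out indices $j_1<\cdots<j_{k_r}$ gives the violation. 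This requires a tree/winning-strategy construction (the vector player's moves depend on earlier choices), and turning that into an array of sequences that simultaneously stabilizes to an asymptotic model is the heart of the \cite{FOSZ} argument. Your sketch does not indicate how this is carried out.
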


It was observed by Baudier, Lancien, Kalton, the third author, and Schlumprecht  in \cite[Section 9.2]{BLMS} that Theorem \ref{theorem FOSZ} no longer holds if we replace $c_0$ with $\ell_p$ for any $1<p<\infty$. The counterexamples are spaces very similar to the space defined by Szlenk in \cite{Sz}. The main purpose of this paper is to provide an answer for the remaining case $p=1$. Note that the main obstruction in this case is the fact that the $\ell_1$-norm is the largest one and hence, assuming that the space admits a unique $\ell_1$ asymptotic model which means a very strong presence of asymptotic $\ell_1$ structure, it is not obvious how to preserve a tree structure in the space which has norm smaller than $\ell_1$.
\begin{thm}
\label{main}
There exists a reflexive Banach space $X$ with an unconditional basis that admits a unique $\ell_1$ asymptotic model with respect to $\mathscr{F}_0(X)$, whereas it is not an Asymptotic $\ell_1$ space.
\end{thm}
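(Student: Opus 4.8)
The plan is to construct $X$ as a Tsirelson-type space built on a carefully chosen family of \emph{special functionals}, in the spirit of the Maurey--Rosenthal and Argyros--Deliyanni constructions, so that on the one hand the $\ell_1$-averages along the basis are controlled (forcing $X$ to be \emph{non-}Asymptotic $\ell_1$ and, with suitable choices, reflexive with an unconditional basis), while on the other hand every weakly null normalized array still generates $\ell_1$ as its asymptotic model. Concretely, I would fix a sequence of integers $(m_j)_j$ and $(n_j)_j$ growing fast (lacunary), use the Schreier families $\mathcal{S}_{n_j}$, and define the norm implicitly by a set $W$ of functionals closed under certain $(\mathcal{S}_{n_j}, 1/m_j)$-operations, \emph{but} with the crucial restriction that the "full-weight" $\ell_1$-type functionals are only applied along \emph{special sequences} obtained via a coding/injectivity function. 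This is exactly the mechanism that destroys a uniform lower $\ell_1$ estimate on the basis (so no subsequence is Asymptotic $\ell_1$, giving the strong Maurey--Rosenthal-type conclusion announced in the abstract), while leaving enough $\ell_1$-behavior for arbitrary weakly null arrays.

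The key steps, in order, would be: (1) define the norming set $W$ and hence $X = \overline{\spn}\{e_n\}$, and verify the basis $(e_n)$ is $1$-unconditional and bimonotone, and that $W$ is closed under restrictions to intervals; (2) establish a \emph{Basic Inequality} of Argyros--Deliyanni type: for a normalized block sequence $(x_k)$ that is a "rapidly increasing sequence" (RIS) with respect to the $(m_j,n_j)$, any functional in $W$ acts on $\sum a_k x_k$ like a functional from an auxiliary mixed-Tsirelson space (or $c_0$-like space) on $(a_k)$, up to a small error; (3) use the Basic Inequality to show that the $\ell_1$-averages $(1/N)\sum_{k=1}^{N} x_k$ of RIS have norm tending to $0$ as fast as $m_j^{-1}$ — this simultaneously proves reflexivity (no $\ell_1$ or $c_0$ spreading model from blocks, combined with boundedness from the $\ell_1$-lower bound inherent in Tsirelson norms) and shows $X$ is not Asymptotic $\ell_1$, because in any block subspace, and in particular along any subsequence of the basis, one produces long averages of uniformly small norm; (4) prove the positive direction: given any normalized weakly null array $(x^i_j)_j$, pass to a subsequence which is (essentially) an RIS-array, and show that \emph{every} joint spreading model is, up to a uniform constant, the $\ell_1$-basis. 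For (4) the upper $\ell_1$-estimate is automatic (Tsirelson norms dominate a multiple of the $\ell_1$-norm on successive blocks after normalization is impossible, so one instead argues the lower $\ell_1$-estimate), and the lower $\ell_1$-estimate must come from exhibiting, for any $n$ successive normalized elements of the array with large enough indices, a functional in $W$ of weight $m_{j_0}$ acting on their sum with value close to $n$: here the coding restriction is sidestepped because in an array we have infinitely many "independent" columns to feed into the special functional.

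The main obstacle will be step (4) reconciled with step (3): we must design the coding so that \emph{block sequences of the basis} are too "correlated" to activate the full-weight $\ell_1$-functionals (hence small averages, non-Asymptotic $\ell_1$), yet \emph{arrays} of weakly null sequences always contain, in each column, enough variety to build admissible special sequences on which the $\ell_1$-functional is legal. The precise combinatorial content — choosing the coding function $\sigma$ on finite sequences of "nodes" so that generic blocks avoid its range while generic arrays meet it — is the technical heart, and getting the estimates in the Basic Inequality uniform enough to yield a \emph{unique} (not merely a family of) $\ell_1$ asymptotic model, equivalently the uniform uniqueness of joint spreading models, is where most of the work lies. A secondary difficulty is verifying reflexivity: one shows $\ell_1 \not\hookrightarrow X$ via the small-averages estimate and $c_0 \not\hookrightarrow X$ via the lower $\ell_1$-estimate on arbitrary blocks coming from the $\mathcal{S}_1$-part of $W$, then invokes the James-type characterization for spaces with an unconditional basis.
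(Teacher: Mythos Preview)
Your proposal has a genuine conceptual gap in reconciling steps (3) and (4), and it misses the actual mechanism the paper uses.

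The paper's construction for Theorem~\ref{main} is not a mixed-Tsirelson space with special functionals and a Basic Inequality. Instead, it fixes a well-founded, infinitely branching tree $\mathcal{T}_\xi$ on $\mathbb{N}$ (compatible with the natural order) and defines a ground set $G^\xi_2$ consisting of $\ell_2$-type functionals supported on \emph{segments} of $\mathcal{T}_\xi$. The norming set $W_\xi$ is then the subset of the Tsirelson extension of $G^\xi_2$ whose tree analyses use pairwise \emph{incomparable} segments at the leaves. The space fails to be Asymptotic~$\ell_1$ for a concrete reason: along any segment of $\mathcal{T}_\xi$ the basis vectors are isometrically $\ell_2^n$, and player~(V) can always steer the game into such a segment. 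The unique $\ell_1$ asymptotic model comes from a completely different tool: a measure-theoretic lemma (Proposition~3.1) showing that for any disjointly supported sequence of probability measures on the compact tree $\widetilde{\mathcal{T}}_\xi$, one can pass to a subsequence and find large-mass subsets $G_j$ of the supports that are pairwise incomparable. Applied to the measures encoding the near-norming functionals of an array $(x^i_j)$, this produces legal functionals in $W_\xi$ witnessing the lower $\ell_1$ estimate. No coding, no RIS, no Basic Inequality.

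Your step (4) cannot work as stated. You claim that for arrays ``the coding restriction is sidestepped because we have infinitely many independent columns,'' but an array is allowed to have all rows equal to a single subsequence $(e_{k_j})_j$ of the basis; there is no extra variety to exploit. If, as you propose, block sequences of the basis are ``too correlated to activate the full-weight $\ell_1$-functionals'' and hence have uniformly small averages, then that very subsequence used as every row of the array would fail to generate $\ell_1$ as an asymptotic model, contradicting uniqueness. The real obstacle---the one you correctly identify as the ``technical heart''---is not a coding trick distinguishing arrays from sequences; it is designing a norm that is weak along a \emph{thin} family of finite configurations (tree segments) while remaining Tsirelson-strong on the \emph{generic} configurations that any block sequence eventually visits. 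Your RIS/coding outline does not supply such a structure, and the Argyros--Deliyanni machinery you invoke is geared toward the opposite goal (making specific special sequences small while keeping the ambient space Asymptotic~$\ell_1$). The construction you sketch is closer in spirit to the paper's Section~4 space $T^\xi_{ess\text{-}inc}$, but even there the $\ell_1$ asymptotic model is obtained via an incomparability-of-measures argument (Lemma~4.6), not via independence of columns.
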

In fact, for every countable ordinal $\xi$, there is a space $T^\xi_{inc}$, as in Theorem \ref{main}, that contains a weakly null $\ell_2$-tree of height $\omega^\xi$. An easy modification of $T^\xi_{inc}$ can yield a space containing a weakly null $\ell_p$-tree, for any $1<p<\infty$ with $p\neq2$, or a weakly null $c_0$-tree of height $\omega^\xi$.  Furthermore, the following analogue of the classical B. Maurey - H. P. Rosenthal \cite{MR} result is proved, which yields a stronger separation of the two properties than Theorem \ref{main}.

\begin{thm}
There exists a reflexive Banach space $X$ with an unconditional basis that admits a unique $\ell_1$ asymptotic model with respect to $\mathscr{F}_0(X)$, whereas any subsequence of the basis generates a non-Asymptotic $\ell_1$ subspace.
\end{thm}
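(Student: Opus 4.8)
The plan is to obtain $X$ by modifying the space $T^\xi_{inc}$ (any fixed countable ordinal $\xi$ will do) from the paragraph following Theorem \ref{main}, keeping intact its parameters and the shape of its norming set but replacing the coding function $\sigma$ that governs the special sequences by a Maurey--Rosenthal type coding \cite{MR}: one which is still injective, so that all the norm estimates of $T^\xi_{inc}$ remain valid, but which is at the same time \emph{large along subsequences}, meaning that for every infinite $M\subseteq\N$ the special sequences --- and hence the entire weakly null $\ell_2$-tree machinery --- can be reproduced using only vectors supported in $M$. Informally, $X$ is built so that its one pathological feature is simultaneously rigid enough to stay invisible to generic weakly null behaviour and homogeneous enough to be visible inside every $[e_n:n\in M]$.

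Granting such a coding, the reflexivity of $X$ and the fact that every asymptotic model of $X$ with respect to $\mathscr{F}_0(X)$ is equivalent to the unit vector basis of $\ell_1$ are inherited from $T^\xi_{inc}$: the corresponding proofs use only the parameter choices and the injectivity of $\sigma$, both of which are retained, so they transfer with no change. For the $\ell_1$ asymptotic model the $\ell_1$-upper estimate is the triangle inequality, while the lower estimate is obtained, for a normalized weakly null array $(x^i_j)_{i,j}$, by passing to subsequences so that each $(x^i_j)_j$ approximates a block sequence and then producing from the regular operations a functional witnessing $\|\sum_{i=1}^n a_ix^i_{j_i}\|\gtrsim\sum_{i=1}^n|a_i|$ with a constant independent of $n$; the $\ell_2$-special functionals are then shown to contribute $o(1)$, because injectivity of $\sigma$ forces any block sequence to meet the support pattern of a special tuple of length $d$ in only boundedly many of its terms, and because the value of an asymptotic model does not depend on the choice of the indices $j_i$, so that no array can conspire to generate arbitrarily long special tuples.

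It remains to check that no $X_M:=[e_n:n\in M]$ is Asymptotic $\ell_1$. Using the largeness of the coding I would build inside $X_M$ a weakly null $\ell_2$-tree of height $\omega$ --- in fact of height $\omega^\xi$, exactly as for $T^\xi_{inc}$ --- which is spread out along $M$: the vectors labelling the nodes of depth $k$ are supported beyond the $k$-th element of $M$, and every chain of length $m$ is $C_0$-equivalent to the unit vector basis of $\ell_2^m$ with $C_0$ an absolute constant. Given any $C>0$, pick $m$ with $\sqrt m>CC_0$ and a chain of length $m$ whose first vector lies past a prescribed level of $M$; this is a normalized block sequence of $(e_n)_{n\in M}$, arbitrarily far out, of length $m$, failing to be $C$-equivalent to the $\ell_1^m$ basis. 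As $C$ was arbitrary, $X_M$ violates the lower $\ell_1$ asymptotic estimate relative to its basis, so it is not Asymptotic $\ell_1$ (a fortiori not in the coordinate-free sense, since block subspaces are subspaces).

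The main obstacle is the joint design of $\sigma$: largeness forces the special tuples, and therefore the $\ell_2$-trees, into every $X_M$, while rigidity (injectivity together with the ``at most one special functional of each weight'' phenomenon) is exactly what prevents weakly null behaviour from seeing the $\ell_2$-special functionals and so keeps the $\ell_1$ asymptotic model. Carrying this out for weakly null \emph{arrays}, rather than merely for weakly null sequences as in the original Maurey--Rosenthal construction, is the delicate point; it is settled by combining the choice-independence inherent in the definition of an asymptotic model with the basic norm estimate (the tree analysis of norming functionals) already available for $T^\xi_{inc}$.
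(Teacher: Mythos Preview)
Your plan has a genuine obstruction that no choice of coding can remove. In $T^\xi_{inc}$ the non-$\ell_1$ behaviour comes from \emph{basis vectors} $e_n$ indexed by a segment of the well-founded tree $\T_\xi$ on $\N$; the norming set is the Tsirelson extension of $\ell_2$-balls supported on such segments. For every infinite $M\subset\N$, Ramsey's theorem for pairs together with the well-foundedness of $\T_\xi$ yields an infinite $M'\subset M$ that is an antichain in $\T_\xi$. On $[e_n:n\in M']$ the ground set $G^\xi_2$ reduces to $\{\pm e^*_n:n\in M'\}$, and the induced norm is exactly Tsirelson's norm on $M'$; hence $X_{M'}$ is Asymptotic $\ell_1$. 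This holds regardless of how the tree $\T_\xi$ is coded on $\N$ --- the only properties used are well-foundedness (which you need for the measure argument giving the unique $\ell_1$ asymptotic model) and that nodes are single naturals. So ``making the coding large along subsequences'' cannot push the $\ell_2$-segments into every $X_M$: the segments live on basis vectors, and some $M'$ avoids them all.

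The paper's route is therefore not a tweak of $T^\xi_{inc}$ but a different construction $T^\xi_{ess\text{-}inc}$. The ground set is replaced by weighted functionals $m_j^{-1}\sum_{n} g(n)e^*_n$ with $\supp g\in\S_{n_j}$, the tree $\T_\xi$ is now a tree of pairs $(g,m_j)$ (so its nodes are functions, not single integers), and the incomparability condition in the norming set is relaxed to ``essential incomparability''. The bad tree inside each $X_M$ is then built from \emph{blocks} --- special convex combinations supported in $M$ --- and is a $c_0$-tree, not an $\ell_2$-tree. Because the vectors are blocks rather than basis elements, one can always find them inside any $M$; this is precisely what fails for segments of basis vectors. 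The price is that the proof of the unique $\ell_1$ joint spreading model no longer follows from the incomparability lemma for measures (Proposition~\ref{proposition incomparable}); a new tool, the successor-determined limit of measures (Lemmas~\ref{formula w-succ} and~\ref{splitting lemma}), is required to handle the essentially incomparable case. Your sketch of the asymptotic-model argument (``injectivity forces boundedly many terms'') does not engage with this difficulty.
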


More specifically, for every countable ordinal $\xi$, there is a space $T^\xi_{ess\-inc}$ as in the theorem above such that the space generated by any infinite subsequence of its basis contains a block $c_0$-tree of height $\omega^\xi$. It is possible to modify $T^\xi_{ess\-inc}$ to contain $\ell_p$-trees for any $1<p<\infty$, instead of $c_0$-trees.

In the final part of this paper we show that, for $1<r<p<\infty$, certain spaces ${JT}^\xi_{r,p}$, similar to those  defined by Odell and Schlumprecht in \cite[Example 4.2]{OS3} (see also \cite[page 66]{O2}), admit a unique $\ell_p$ asymptotic model but are not Asymptotic $\ell_p$.  These are spaces with an unconditional Schauder basis $(e_t)_{t\in\mathcal{\T_\xi}}$ indexed over a well-founded and infinite branching countable tree $\T_\xi$ of height $\omega^\xi$. The norm of ${JT}^\xi_{r,p}$ is defined as follows: if $x = \sum_{t\in\mathcal{T}_\xi}a_te_t$ and $S$ is a segment of $\mathcal{T}_\xi$ define $\|S(x)\|^r_r = \sum_{t\in S}|a_t|^r$ and

\begin{equation}\label{jtqp}\tag{1.8}
\|x\|_{{JT}_{r,p}^\xi} = \sup\bigg\{\Big(\sum_{i=1}^n\|S_i(x)\|_r^p\Big)^{1/p}\!\!\!\!: (S_i)_{i=1}^n \text{  disjoint segments of }\T_\xi\bigg\}.\end{equation}

The space $T^\xi_{inc}$ from Theorem \ref{main} is defined on the same tree. We say that two segments $S_1$, $S_2$ of $\mathcal{T}_\xi$ are incomparable if any node of $S_1$ is incomparable to any node of $S_2$. We relabel the basis of the Tsirelson space $T$ as $(e_t)_{t\in\mathcal{T}_\xi}$ so that the order is compatible with the initial one and define the norm of $T^\xi_{inc}$ as follows : for $x = \sum_{t\in\mathcal{T}_\xi}a_te_t$ define $\|S(x)\|^2_2 = \sum_{t\in S}|a_t|^2$ and

\[\|x\|_{T_{inc}^{\xi}} = \sup\bigg\{\Big\|\sum_{i=1}^n\|S_i(x)\|_{2}e_{\min{S_i}}\Big\|_T\!\!\!: (S_i)_{i=1}^n \text{ incomparable segments of } \T_\xi\bigg\}.\]
However, we will not use the above description of the norms. Instead we revert to the notion of norming sets and norming functionals. This makes some parts of the proof easier and it can also be potentially useful to show similar results on more complicated spaces based on these norms.

Finally, we should mention that Problem \ref{asmodelasellp} is only one of several concerning the separation of different asymptotic structures in Banach space theory. For example, in \cite{AM3} the first and third author showed that there exist spaces with a uniformly unique spreading model, which can be chosen to be any $\ell_p$ or $c_0$, that have no Asymptotic $\ell_p$ or $c_0$ subspace. This answers a question by Odell in \cite{O1} and Junge, Kutzarova, and Odell in \cite{JKO}. Moreover, in \cite{KM}  Kutzarova and the third author showed that certain spaces by Beanland, the first author, and the third author from  \cite{ABM} are asymptotically symmetric and have no Asymptotic $\ell_p$ or $c_0$ subspaces, answering a question from \cite{JKO}.

    \begin{notn*}
	By $\N=\{1,2,\ldots\}$ we denote the set of all positive integers. We will use capital letters as $L,M,N,\ldots$ (resp. lower case letters as $s,t,u,\ldots$) to denote infinite subsets (resp. finite subsets) of $\N$. For every infinite subset $L$ of $\N$, the notation $[L]^\infty$ (resp. $[L]^{<\infty}$) stands for the set of all infinite (resp. finite) subsets of $L$. For every $s\in[\N]^{<\infty}$, by $|s|$ we denote the cardinality of $s$. For $L\in[\N]^\infty$ and $k\in\N$, $[L]^k$ (resp. $[L]^{\le k}$) is the set of all $s\in[L]^{<\infty}$ with $|s|=k$ (resp. $|s|\le k$). For every $s,t\in[\N]^{<\infty}$, we write $s<t$ if either at least one of them is the empty set, or $\max s<\min t$. Also for  $\emptyset\neq M\in[\N]^\infty$ and $n\in\N$ we write $n<M$ if $n<\min M$. For $s=\{n_1<\ldots<n_k\}\in[\N]^{<\infty}$ and for each $1\le i\le k$, we set $s(i)=n_i$.
	
	Moreover, we follow \cite{LT} for standard notation and terminology concerning Banach space theory.
\end{notn*}

\section{Asymptotic structures}

Let us recall the definitions of the asymptotic notions that appear in the results of this paper and were mentioned in the introduction. Namely, asymptotic models, joint spreading models and the notions of Asymptotic $\ell_p$ and Asymptotic $c_0$ spaces. For a more thorough discussion, including several open problems and known results, we refer the reader to \cite[Section 3]{AM3}.

\begin{dfn}[\cite{HO}]
An infinite array of sequences $(x^{i}_j)_j$, $i\in\N$, in a Banach space $X$, is said to generate a sequence $(e_i)_i$, in a seminormed space $E$, as an asymptotic model if for every $\varepsilon>0$ and $n\in\N$, there is a $k_0\in\N$ such that for any natural numbers $k_0\leq k_1<\cdots<k_n$ and any choice of scalars $a_1,\ldots,a_n$ in $[-1,1]$ we have that
\[\Bigg|\Big\|\sum_{i=1}^na_ix_{k_i}^{i}\Big\| - \Big\|\sum_{i=1}^na_ie_{i}\Big\|\Bigg| < \varepsilon.\]
\end{dfn}

A Banach space $X$ is said to admit a unique asymptotic model with respect to a family $\mathscr{F}$ of normalized sequences in $X$ if whenever two infinite arrays, consisting of sequences from $\mathscr{F}$, generate asymptotic models then those must be equivalent. Typical families under consideration are those of normalized weakly null sequences, denoted  $\mathscr{F}_0(X)$, normalized Schauder basic sequences, denoted  $\mathscr{F}(X)$, or the family all normalized block sequences of a fixed basis of $X$, if it has one, denoted  $\mathscr{F}_b(X)$.

The notion of plegma families was first introduced by Kanellopoulos, Tyros, and the first author  in \cite{AKT}. We will use the slightly modified definition of \ from \cite{AGLM}.

	\begin{dfn}[\cite{AGLM}]
	Let $M\in[\N]^\infty$ and $k\in\N$. A {plegma} (resp. {strict plegma}) family in $[M]^k$ is a finite sequence $(s_i)_{i=1}^l$ in $[M]^k$ satisfying the following.
	\begin{enumerate}
		\item[(i)] $s_{i_1}(j_1)<s_{i_2}(j_2)$ for every $1\le j_1<j_2\le k$  and $1\le i_1,i_2\le l$.
		\item[(ii)] $s_{i_1}(j)\le s_{i_2}(j)$ $\big($resp. $s_{i_1}(j)< s_{i_2}(j)\big)$ for all $1\le i_1<i_2\le l$ and $1\le j\le k$ .
	\end{enumerate}
	For each $l\in \N$, the set of all sequences $(s_i)^l_{i=1}$ which are plegma families in $[M]^k$ will be denoted by $Plm_l([M]^k)$ and that of the strict plegma ones by $S$-$Plm_l([M]^k)$.
\end{dfn}

\begin{dfn}[\cite{AGLM}]
A finite array of sequences $(x^{i}_j)_j$, $1\leq i\leq l$, in a Banach space $X$, is said to generate another array of sequences $(e_j^{i})_j$, $1\leq i\leq l$, in a seminormed space $E$, as a joint spreading model if for every $\varepsilon>0$ and $n\in\N$, there is a $k_0\in\N$ such that for any $(s_i)_{i=1}^l\in S$-$Plm([\N]^n)$ with $k_0\le s_1(1)$ and any $l\times n$ matrix $A=(a_{ij})$ with entries in $[-1,1]$ we have that
\[\Bigg|\Big\|\sum_{i=1}^l\sum_{j=1}^na_{ij}x_{s_i(j)}^{i}\Big\| - \Big\|\sum_{i=1}^l\sum_{j=1}^na_{ij}e_j^{i}\Big\|\Bigg|<\varepsilon.\]
\end{dfn}

 A Banach space $X$ is said to admit a  uniformly unique joint spreading model with respect to a family of normalized sequences $\mathscr{F}$ in $X$, if there exists a constant $C$ such that whenever two arrays $(x_j^{i})_j$ and $(y_j^{i})_j$, $1\leq i\leq l$, of sequences from $\mathscr{F}$ generate joint spreading models then those must be $C$-equivalent. Moreover, a Banach space admits a uniformly unique joint spreading model with respect to a family $\mathscr{F}$ if and only if it admits a unique asymptotic model with respect to $\mathscr{F}$ (see, e.g.,  \cite[Remark 4.21]{AGLM} or \cite[Proposition 3.12]{AM3}).

It was proved in \cite{AGLM} that whenever a Banach space admits a uniformly unique joint spreading model with respect to some family satisfying certain stability conditions, then it satisfies a property concerning its bounded linear operators, called the Uniform Approximation on Large Subspaces property (see \cite[Theorem 5.17]{AGLM} and \cite[Theorem 5.23]{AGLM}).

 \begin{dfn}[\cite{MT} and \cite{MMT}]
A Banach space $X$ is called Asymptotic $\ell_p$,  $1\leq p<\infty$, (resp. Asymptotic $c_0$) if there exists a constant $C$ such that in a two-player $n$-turn game $G(n,p,C)$, where in each turn $k=1,\ldots,n$ player (S) picks a finite codimensional subspace $Y_k$ of $X$ and then player (V) picks a normalized vector $x_k\in Y_k$, player (S) has a winning strategy to force player (V) to pick a sequence $(x_k)_{k=1}^n$ that is $C$-equivalent to the unit vector basis of $\ell^n_p$ (resp. $\ell^\infty_n)$.
\end{dfn}

The typical example of a non-classical Asymptotic $\ell_p$ space is the Tsirelson space as defined by Figiel and Johnson in \cite{FJ}. This is a reflexive Asymptotic $\ell_1$ space and it is the dual of Tsirelson's original space from \cite{T} that is  Asymptotic $c_0$. Finally, whenever a Banach space is Asymptotic $\ell_p$ or Asymptotic $c_0$, then it admits a uniformly unique joint spreading model with respect to $\mathscr{F}_0(X)$ (see, e.g., \cite[Corollary 4.12]{AGLM}).

\section{A family of non-asymptotic $\ell_1$ spaces admitting uniformly unique $\ell_1$ joint spreading models}
\label{ell1section}
In this section we define the spaces $T_{inc}^\xi$, for each countable ordinal $\xi$, and we prove that they admit a uniformly unique $\ell_1$ joint spreading model with respect to $\mathscr{F}_b(T_{inc}^\xi)$ and are not Asymptotic $\ell_1$. The spaces are defined in terms of norming sets and norming functionals as this is  more convenient to prove the desired result.
	
	\subsection{Measures on Well-Founded Countable Compact Trees} We start with a key result that will be used later to prove that $T^\xi_{inc}$ admits a uniformly unique joint spreading model equivalent to the unit vector basis of $\ell_1$.
	
	 Let $\preceq$ be a partial order on some infinite subset $M$ of the naturals, which is compatible with the standard order, i.e. $n\preceq m$ implies $n\le m$, for all $n,m\in M$. Assume that, for each $n\in M$, the set $S_n=\{m\in M: m\preceq n \}$ is finite and totally ordered with respect to $\preceq$, that is, $\T=(M,\preceq)$ is a tree. Let us also assume that the tree $\T$ is well-founded, i.e., it contains no infinite totally ordered sets, and infinite branching, i.e., every non-maximal node has infinitely many immediate successors.
	
	 Observe that $\tT=(\{S_t :{t\in \T}\},\subset)$ is also a tree and is in fact isomorphic to $\T$ via the mapping $t\mapsto S_t$. Given $t\in\T$, we will denote $S_t$ by $\tt$. Moreover, two nodes  $\tt_1,\tt_2$ are incomparable in $\tT$ if and only if the nodes $t_1,t_2$ are incomparable in $\T$, i.e. not comparable in the respective order. For $\tt\in\T$, we denote by $V_{\tt}$ the set consisting of $\tt$ and all of its successors.
	
	 Note that $\tT$ is a countable compact space when equipped with the pointwise convergence topology and hence $\mathcal{M}(\tT)$, the set of all regular  measures on $\tT$, is isometric to $\ell_1(\tT)$. In particular, each $\mu\in\mathcal{M}(\tT)$ is of the form $\mu=\sum_{\tt\in \tT}a_\tt\delta_\tt$, where $\delta_\tt$ is the Dirac measure centered on $\tt$, and $\|\mu\|=\sum_{\tt\in \tT}|a_\tt|$. Finally, the support of $\mu$ is defined as $\supp\mu=\{\tt\in \tT:a_\tt\neq0 \}$.  We will prove the following proposition, starting with Lemma \ref{lemma incomparable}

	\begin{prop}\label{proposition incomparable}
				Let $(\mu_j)_j$ be a sequence of positive regular measures on $\tT$ with disjoint finite supports and let $c>0$ be such that $\mu_j(\tT)<c$ for all $j\in\N$. Then, for every $\varepsilon>0$, there is an $L\in[\N]^\infty$ and a subset $G_j$ of $\supp\mu_{j}$ for each $j\in L$,  satisfying the following.
		\begin{itemize}
			\item[(i)] $\mu_{j}(\tT\setminus G_j)\le \varepsilon$ for every $j\in L$.
			\item[(ii)] The sets $G_j$, $j\in\N$, are pairwise incomparable.
		\end{itemize}
	\end{prop}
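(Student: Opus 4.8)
The plan is to build the set $L$ and the subsets $G_j$ by a diagonal/pruning argument, exploiting the fact that the measures are \emph{finite} and \emph{uniformly bounded} (by $c$) together with the tree structure of $\tT$. First I would observe that since each $\mu_j$ is supported on finitely many nodes of the countable tree $\tT$, and the total masses are bounded by $c$, for each fixed $j$ there are only finitely many ``heavy'' nodes, i.e.\ nodes $\tt$ with $\mu_j(\{\tt\})>\varepsilon/2^k$ for a given $k$; trimming off the light ones costs at most $\varepsilon$ in mass. So the real content is arranging incomparability. The key combinatorial input should be a Ramsey-type stabilization: pass to an infinite subset $L_0$ of $\N$ along which the ``shape'' of the heavy part of $\supp\mu_j$ stabilizes. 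Concretely, since $\tT$ is well-founded and countable, I would first reduce to the case where every $\supp\mu_j$ has the same cardinality $N$ (pigeonhole), then enumerate $\supp\mu_j=\{\tt_1^j,\dots,\tt_N^j\}$ and pass to a further infinite subset along which, for each pair $(p,q)$, the comparability relation between $\tt_p^j$ and $\tt_q^j$ is the same for all $j$ in the subset, and moreover the \emph{common initial segments} behave coherently. The point of Lemma~\ref{lemma incomparable} (which the excerpt promises precedes this proposition) is presumably exactly this coherence statement, so I would invoke it here.

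Next, with the shapes stabilized, the obstruction to incomparability is that the supports may share long common stems: there could be a node $\tt$ lying below a positive-mass part of infinitely many $\mu_j$. Here is where uniform boundedness by $c$ is essential. Consider a node $\tt\in\tT$ and the ``branching'' it causes: if infinitely many of the $\mu_j$ put mass $\ge\delta$ on $V_{\tt}$, that alone is fine, but if infinitely many put mass $\ge\delta$ on $\tt$ \emph{itself} or on a \emph{common} successor, we cannot separate them on that part. The resolution is that along a branch the total mass any single $\mu_j$ can distribute is $<c$, so after removing from each support the (finitely many, by the trimming step) heavy nodes that lie on stems shared with infinitely many others, what remains can be split off into pairwise-incomparable pieces. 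I would make this precise by a transfinite induction on the rank of $\tT$ (well-foundedness gives a countable ordinal rank): at a node of rank $\alpha$, the restrictions of the $\mu_j$ to the subtrees hanging below the immediate successors are handled by the inductive hypothesis, and only the finitely many masses concentrated at or near the root-level nodes of rank $\alpha$ need separate treatment, which is possible because infinite branching provides infinitely many disjoint successor-subtrees to redistribute into --- or rather, the pruning chooses for each $j\in L$ a successor subtree $V_{\tt(j)}$ essentially carrying $\mu_j$ up to $\varepsilon$, with the $\tt(j)$ distinct, hence incomparable.

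I expect the main obstacle to be the bookkeeping in the transfinite induction: one must simultaneously (a) keep the mass loss bounded by $\varepsilon$ \emph{globally}, not just at each level, which forces a geometric allocation $\varepsilon/2^{\text{level}}$ of the error budget and an argument that the relevant ``levels'' used for a fixed $j$ are finitely many (true because $\supp\mu_j$ is finite), and (b) ensure the successor-subtrees chosen for different $j\in L$ are genuinely pairwise incomparable, which requires that the stabilization in the first step already fixed which successor of each shared stem node each $\mu_j$ ``escapes into,'' and then passing to a subset where these escape-successors are all distinct. The well-foundedness of $\tT$ guarantees the induction terminates, and the infinite-branching hypothesis is what lets us always find room to make the escape-successors distinct. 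Once these two points are organized, conclusion (i) is the trimming estimate and (ii) is immediate from the construction.
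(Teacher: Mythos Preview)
Your proposal has the right final picture---for different $j$ the sets $G_j$ should live in different immediate-successor subtrees of finitely many ``attractor'' nodes---but the mechanism you propose for finding those attractor nodes and organizing the argument does not work as stated, and the key idea the paper uses is missing.

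The paper's approach is not a transfinite induction on the rank of $\tT$, nor a Ramsey stabilization of the ``shape'' of $\supp\mu_j$. Instead it uses \emph{weak-$*$ compactness}: pass to a subsequence so that $w^*\text{-}\lim_j\mu_j=\mu=\sum_{\tt}a_{\tt}\delta_{\tt}$ exists, and then fix finitely many atoms $\tt_1,\dots,\tt_{n_0}$ of $\mu$ carrying all but $\delta$ of its total mass. Lemma~\ref{lemma incomparable} is then applied at each single node $\tt_i$: because the clopen sets $W^{\tt_i}_k=V_{\tt_i}\setminus\bigcup_{l\le k}V_{\ts_l}$ (where $(\ts_l)_l$ enumerates the immediate successors of $\tt_i$) shrink to $\{\tt_i\}$, one has $\mu_j(W^{\tt_i}_j)\to a_{\tt_i}$, and the sets $G^i_j=\supp\mu_j\cap W^{\tt_i}_j$ for varying $j$ are automatically pairwise incomparable, since for $j_1<j_2$ they sit in disjoint collections of successor subtrees. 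One then takes $G_j=\bigcup_{i=1}^{n_0}G_j^i$ and does a short cleanup to make the pieces for different $i$ mutually incomparable as well.

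Your sketch misses this because (a) you describe Lemma~\ref{lemma incomparable} as a ``coherence of common initial segments'' statement, which it is not---it is precisely the single-atom version of the proposition, proved via the $w^*$-limit; (b) your claim that ``the pruning chooses for each $j\in L$ a successor subtree $V_{\tt(j)}$ essentially carrying $\mu_j$ up to $\varepsilon$'' is false in general: a single $\mu_j$ may spread its mass over many incomparable parts of $\tT$, so one successor subtree per $j$ cannot suffice---this is exactly why one first identifies the finitely many limit atoms $\tt_1,\dots,\tt_{n_0}$ and works near each; and (c) the proposed transfinite induction on rank has no clear inductive statement, and at limit ranks it is unclear how to thread together the choices of $L$ from the subtrees while keeping a single infinite $L$ and a global $\varepsilon$-bound. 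The $w^*$-limit replaces all of this with a finite list of concrete nodes and a one-step neighborhood argument.
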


	\begin{lem}\label{lemma incomparable}
		Let $(\mu_j)_j$ be a sequence of positive regular measures on $\tT$ with disjoint finite supports and let $c>0$ be such that $\mu_j(\tT)<c$ for all $j\in\N$. Assume that $w^*\-\lim_j\mu_j=\mu=\sum_{\tt\in \tT}a_\tt\delta_\tt$. Then, for every $\tt\in\supp\mu$ and $\varepsilon>0$, there is an $L\in[\N]^\infty$ and a subset $G^\tt_j$ of $\supp\mu_{j}$ for each $j\in L$,  satisfying the following.
		\begin{itemize}
			\item[(i)] $G^\tt_j\subset V_\tt$ for every $j\in L$.
			\item[(ii)] $|\mu_{j}(G^\tt_j)-a_\tt|<\varepsilon$ for every $j\in L$.
			\item[(iii)] The sets $G^\tt_j$, $j\in L$, are pairwise incomparable.
		\end{itemize}
	\end{lem}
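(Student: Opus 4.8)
The plan is to first truncate each $\mu_j$ to a clopen neighbourhood of $\tt$ in $\tT$ in which $\mu$ carries essentially no mass apart from the atom at $\tt$ itself, and then to peel off, finitely many at a time, the subtrees hanging below $\tt$, in such a way that distinct indices $j$ get assigned pairwise disjoint families of such subtrees; incomparability will then be automatic. Two preliminary observations: since a $w^*$-limit of positive measures is positive, $\mu\ge 0$, so $a_\tt>0$; and $\tt$ cannot be a maximal node of $\T$, for otherwise $V_\tt=\{\tt\}$ would be clopen and $\mu_j(\{\tt\})\to\mu(\{\tt\})=a_\tt>0$ would contradict the fact that the single point $\tt$ lies in at most one of the pairwise disjoint sets $\supp\mu_j$. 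Hence $\tt$ has infinitely many immediate successors $\ts_1,\ts_2,\ldots$ in $\T$ and $V_\tt=\{\tt\}\cup\bigcup_nV_{\ts_n}$ is a partition; and, discarding the at most one index with $\tt\in\supp\mu_j$, we may assume $\tt\notin\supp\mu_j$ for all $j$.

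Each $V_{\tt'}$ is clopen in $\tT$, since containing $\tt'$ is a condition on the finitely many coordinates indexed by $\tt'$; hence so is every finite union $K_N:=V_{\ts_1}\cup\cdots\cup V_{\ts_N}$, and $\tt\notin K_N$. Using that $\tT$ is compact, metrizable and zero-dimensional together with $\sum_{\tt'\in\supp\mu}a_{\tt'}<\infty$, I would choose a clopen $U$ with $\tt\in U\subseteq V_\tt$ and $\mu(U)<a_\tt+\varepsilon/4$; since $\mu\ge 0$ and $\tt\in U$, also $\mu(U)\ge a_\tt$, so $\mu(U\setminus\{\tt\})<\varepsilon/4$. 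As the indicator of a clopen set is continuous, $w^*$-convergence gives $\mu_j(U)\to\mu(U)$ and, for each fixed $N$, $\mu_j(U\cap K_N)\to\mu(U\cap K_N)\le\mu(U\setminus\{\tt\})<\varepsilon/4$. This is the key \emph{concentration} fact: $\mu_j(U)$ eventually lies within $\varepsilon/4$ of $a_\tt$, while for each fixed $N$ the mass that $\mu_j$ places on the first $N$ subtrees below $\tt$, intersected with $U$, is eventually below $\varepsilon/4$.

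Then I would run a recursion producing $j_1<j_2<\cdots$, which will be $L$, and integers $0=M_0<M_1<M_2<\cdots$: given $M_{r-1}$, choose $j_r>j_{r-1}$ large enough that $\mu_{j_r}(U)\in(a_\tt-\varepsilon/4,a_\tt+\varepsilon/4)$ and $\mu_{j_r}(U\cap K_{M_{r-1}})<\varepsilon/4$, set $G^\tt_{j_r}:=(\supp\mu_{j_r}\cap U)\setminus K_{M_{r-1}}$, and let $M_r$ be the largest $n$ with $V_{\ts_n}\cap G^\tt_{j_r}\ne\emptyset$ (or $M_r:=M_{r-1}+1$ if $G^\tt_{j_r}=\emptyset$). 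As $\supp\mu_{j_r}$ is finite, $M_r$ is a well-defined integer with $M_r>M_{r-1}$, and $G^\tt_{j_r}\subseteq\bigcup_{n=M_{r-1}+1}^{M_r}V_{\ts_n}$ because $G^\tt_{j_r}\subseteq V_\tt\setminus(\{\tt\}\cup K_{M_{r-1}})$. Now $G^\tt_{j_r}\subseteq U\subseteq V_\tt$ gives (i); $\mu_{j_r}(G^\tt_{j_r})=\mu_{j_r}(U)-\mu_{j_r}(U\cap K_{M_{r-1}})\in(a_\tt-\varepsilon/2,a_\tt+\varepsilon/4)$ gives (ii); and for $r<s$ one has $M_r\le M_{s-1}$, so $G^\tt_{j_r}$ and $G^\tt_{j_s}$ lie below disjoint sets of immediate successors of $\tt$. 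Since $V_{\ts_n}\cap V_{\ts_m}=\emptyset$ whenever $n\ne m$ (two distinct immediate successors of $\tt$ are incomparable, hence have no common successor), any node of $V_{\ts_n}$ is incomparable to any node of $V_{\ts_m}$; this gives (iii). Taking $L:=\{j_r:r\in\N\}$ completes the proof.

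The only genuinely non-routine point is to recognise that disjointness of the supports does not by itself force incomparability, and to isolate the mechanism that repairs this: because $\mu_j\to\mu$ with $\mu$ having an atom exactly at $\tt$, the restrictions of the $\mu_j$ to $U$ must concentrate on the subtrees $V_{\ts_n}$ with $n$ large, and this is precisely what permits assigning to successive $j_r$ disjoint blocks $[M_{r-1}+1,M_r]$ of subtree indices. Everything else — the choice of $U$, the diagonal recursion, and the bookkeeping of the $\varepsilon/4$'s — is routine.
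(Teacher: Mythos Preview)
Your proof is correct and follows essentially the same strategy as the paper's: both arguments enumerate the immediate successors $\ts_1,\ts_2,\ldots$ of $\tt$, exploit that the clopen sets $V_\tt\setminus\bigcup_{n\le N}V_{\ts_n}$ shrink to $\{\tt\}$ so that $\mu$-mass there tends to $a_\tt$, and then assign to successive indices $j_r$ disjoint blocks of subtrees $V_{\ts_n}$ to force pairwise incomparability. The only organizational difference is that the paper runs a diagonal argument with the moving neighbourhoods $W^\tt_j=V_\tt\setminus\bigcup_{i\le j}V_{\tt_i}$ and then a second inductive pass for incomparability, whereas you fix a single clopen $U$ with $\mu(U\setminus\{\tt\})<\varepsilon/4$ up front and carry out both tasks in one recursion; this is a mild streamlining but not a different idea.
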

	\begin{proof} Recall that the nodes of $\T$ are in fact naturals numbers. Hence identifying $\{t:\tt\in\supp \mu_j \}$, $j\in\N$, as subsets of the naturals and passing to a subsequence, we may assume that they are successive.
	
	Let $(\tt_j)_j$ be an enumeration of the immediate successors of $\tt$ and for each $j\in \N$ define $W^\tt_j=V_\tt\setminus\cup_{i=1}^jV_{\tt_i}$. Observe that $(W^\tt_j)_j$ is a decreasing sequence of clopen subsets of $\tT$ with $\cap_{j}W^\tt_j=\{\tt\}$ and hence $\lim_j\mu(W^\tt_j)=a_\tt$ and $\lim_j\mu_j(W^\tt_i)=\mu(W^\tt_i)$ for all $i\in\N$. We can thus find $N\in[\N]^\infty$ and pass to a subsequence of $(\mu_{j})_j$, which we relabel for convenience, so that $\lim_{j\in N}|\mu_{j}(W^\tt_j)-\mu(W^\tt_j)|=0$ and define $G^\tt_j=\supp\mu_{j}\cap W^\tt_j$ for each $j\in N$. Note then that $\lim_{j\in N}\mu_{j}(G^\tt_j)=a_\tt$ and $\mu_{j}|_{G^\tt_j}(\cup_{i=1}^j V_{\tt_i})=0$ for all $j\in N$.
		
		There is at most one $j\in N$ such that $\tt\in G^\tt_j$ and hence, passing to a subsequence, we may assume that $\tt\notin G^\tt_j$ for all $j\in N$. Moreover, since $\lim_{j\in N}\mu_{j}(G^\tt_j)=a_\tt$, we may even pass to a further subsequence such that $|\mu_{j}(G^\tt_j)-a_\tt|<\varepsilon$ for all $j\in N$. For the remaining part of the proof we will choose, by induction, an $L\in[N]^\infty$ such that $G^\tt_{j}$, $j\in L$, are pairwise incomparable. Set $l_1=\min N$ and suppose that we have chosen $l_1<\ldots<l_k$ in $N$, for some $k\in\N$, such that $G^\tt_{l_i}$ and $G^\tt_{l_j}$ are incomparable, $1\le i<j\le k$. Pick $l_k<l_{k+1}\in N$ such that $\mu_{l_{k+1}}|_{G^\tt_{l_{k+1}}}(\cup\{ V_\ts:\ts\in\cup_{i=1}^k G^\tt_{l_i}\} )=0$. Then, if for some $1\le i \le k$ the nodes $\ts_1\in G^\tt_{l_i}$ and $\ts_2\in G^\tt_{l_{k+1}}$ are comparable, we have that $\ts_2\in V_{\ts_1}$ whereas $\mu_{l_{k+1}}(V_{\ts_{1}})=0$, which is a contradiction. Hence $G^\tt_{l_{1}},\ldots,G^\tt_{l_{k+1}}$ are pairwise incomparable.
	\end{proof}

	\begin{proof}[Proof of Proposition \ref{proposition incomparable}]
		 Passing to a subsequence, since $\tT$ is compact with respect to the pointwise convergence topology and $(\mu_j)_j$ are uniformly bounded, we may assume that $(\mu_j)_j$ $w^*$-converges to some measure $\mu=\sum_{\tt\in\tT} a_\tt\delta_\tt$ in $\mathcal{M}(\tT)$.
		
		 Let $\delta>0$ be such $(1-\delta)(\mu(\tT)-\delta)> \mu(\tT)-{\varepsilon}/{2}$ and pick $n_0\in\N$ such that $\sum_{i=1}^{n_0}a_{\tt_i}\ge \mu(\tT)-\delta$. Applying the previous lemma successively for each $\tt_i$, $i=1,\ldots,n_0$, we obtain an $L\in[\N]^\infty$ and, for each $j\in L$ and $i=1,\ldots,n_0$, a subset $G^i_j$ of $\supp\mu_{j}$ satisfying items (i) - (iii) of Lemma \ref{lemma incomparable} for $\tt_i$ and $\delta a_{\tt_i}$. Note that if $\tt_{i_1},\tt_{i_2}$ are incomparable for some $1\le i_1,i_2\le n_0$, then by item (i), the sets $G^{i_1}_{j_1}$ and $G_{j_2}^{i_2}$ are also incomparable for any $j_1,j_2\in L$. If the nodes $\tt_{i_1},\tt_{i_2}$ are comparable, say $\tt_{i_1}\subset\tt_{i_2}$, then  there exists at most one $j\in L$ such that $\tt_{i_2}\in G^{i_1}_j$. Hence by a finite induction argument, we may pass to a subsequence such that the sets $G^i_j$, for $i=1,\ldots,n_0$ and $j\in L$, are pairwise incomparable.  Define $G_j=\cup_{i=1}^{n_0}G^i_j$, $j\in L$, and conclude that
\[
\mu_{j}(G_j)=\sum_{i=1}^{n_0}\mu_{j}(G^i_j)\ge\sum_{i=1}^{n_0}a_{\tt_i}-\delta a_{\tt_i}\ge(\mu(\tT)-\delta)(1-\delta)> \mu(\tT)-\frac{\varepsilon}{2}.
\]
Finally, passing to a further subsequence if necessary, we may also assume that $|\mu_j(\tT)-\mu(\tT)|<{\varepsilon}/{2}$ and hence $\mu_j(\tT\setminus G_j)<\varepsilon$ for every $j\in L$.
\end{proof}

\subsection{Tsirelson Extension of a Ground Set.} In order to define $T_{inc}^\xi$, we first introduce some necessary concepts used in the construction of Tsirelson type spaces.

\begin{dfn}
	A subset $W$ of $c_{00}(\N)$ is called a norming set if it satisfies the following conditions.
	\begin{itemize}
		\item[(i)] $W$ is symmetric and $ e^*_i\in W$ for every $i\in\N$.
		\item[(ii)] $\|f\|_\infty\le1$ for every $f\in W$.
		\item[(iii)] $W$ is closed under the restriction of its elements to intervals of $\N$.
	\end{itemize}
\end{dfn}
A norming set $W$ induces a norm $\|\cdot\|_W$ on $c_{00}(\N)$ defined as
\[
\|x\|_W=\sup\{f(x):f\in W\}.
\]

\begin{dfn}Let $G$ be a norming set on $c_{00}(\N)$. The Tsirelson extension of $G$, denoted by $W_G$, is the minimal subset of $c_{00}(\N)$ that contains $G$ and is closed under the $(\mathcal{S},1/2)$-operation, i.e., if  $f_1,\ldots,f_n$ are in $W_G$ and $n\le \supp f_1<\ldots<\supp f_n$, then $1/2\sum_{i=1}^nf_i$ is also in $W_G$. We call $G$ the ground set of $W_G$.
\end{dfn}

Note that $W_G$ is a norming set on $c_{00}(\N)$. Moreover, the induced norm $\|\cdot\|_{W_G}$ satisfies the following implicit equation
\[
\|x\|_{W_G}=\max\Big\{\|x\|_G,\frac{1}{2}\sup\sum_{i=1}^n\|E_ix\|_{W_G} \Big\}
\]
where the supremum is taken over all finite collections $E_1,\ldots,E_n$ of successive intervals of $\N$ with $n\le E_1$.

\begin{dfn}
	Let $f\in W_G$. For a finite tree $\mathcal{A}$, a family $(f_\alpha)_{\alpha\in \mathcal{A}}$ is said to be a tree analysis of $f$ if the following are satisfied.
	\begin{itemize}
		\item[(i)] $\mathcal{A}$ has a unique root denoted by 0 and $f_0=f$.
		\item[(ii)] For every maximal node $\alpha\in\mathcal{A}$  we have that $f_\alpha\in G$.
		\item[(iii)] Let $\alpha$ be a non-maximal node of $\mathcal{A}$ and denote by $S(\alpha)$ set of  immediate successors of $\alpha$. Then $f_\alpha\in W_G$ and the ranges of $f_s$, $s\in S(\alpha)$, are disjoint and $f_\alpha=1/2\sum_{s\in S(\alpha)}f_s$.
	\end{itemize}
\end{dfn}

It follows, by minimality, that every $f\in W_G$ admits a tree analysis.

\begin{prop}\label{proposition tree analysis}
	Let $f\in W_G$ with a tree analysis $(f_\alpha)_{\alpha\in \mathcal{A}}$ and denote by $\mathcal{M}$ the set of all maximal nodes of $\mathcal{A}$. Then the following hold.
	\begin{itemize}
		\item[(i)] For every $\alpha\in\mathcal{M}$, there is a $k_\alpha\in\N\cup\{0\}$ such that $f=\sum_{\alpha\in \mathcal{M}}f_\alpha/2^{k_\alpha}$.
		\item[(ii)] If $\mathcal{N}\subset\mathcal{M}$, then $g=\sum_{\alpha\in \mathcal{N}}f_\alpha/2^{k_\alpha}$ is in $W_G$ and $g=f|_{\cup\{\supp f_\alpha:\alpha\in \mathcal{N}\}}$.
	\end{itemize}
\end{prop}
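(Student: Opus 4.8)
The plan is to prove both statements together by induction on the height of the finite tree $\mathcal{A}$, taking $k_\alpha$ to be the \emph{level} of $\alpha$ in $\mathcal{A}$, that is, the number of edges on the unique path from the root $0$ to $\alpha$; in particular $k_0=0$ when $\mathcal{A}=\{0\}$.

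For (i), if $\mathcal{A}=\{0\}$ then $\mathcal{M}=\{0\}$ and $f=f_0=f_0/2^0$. Otherwise the root is non-maximal; writing $S(0)=\{s_1,\dots,s_n\}$, each subtree $\mathcal{A}_{s_i}$ rooted at $s_i$ (with the restriction of $(f_\alpha)_\alpha$ to it) is again a tree analysis, this time of $f_{s_i}$, its set of maximal nodes $\mathcal{M}_{s_i}$ satisfies $\mathcal{M}=\bigsqcup_{i=1}^n\mathcal{M}_{s_i}$, and the level of $\alpha\in\mathcal{M}_{s_i}$ inside $\mathcal{A}_{s_i}$ equals $k_\alpha-1$. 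The inductive hypothesis gives $f_{s_i}=\sum_{\alpha\in\mathcal{M}_{s_i}}f_\alpha/2^{k_\alpha-1}$, whence $f=f_0=\tfrac12\sum_{i=1}^n f_{s_i}=\sum_{\alpha\in\mathcal{M}}f_\alpha/2^{k_\alpha}$. The same induction, using that the ranges of $f_s$, $s\in S(0)$, are pairwise disjoint, shows that the supports of the $f_\alpha$, $\alpha\in\mathcal{M}$, are pairwise disjoint, so the coefficients above are well defined.

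For (ii), put $E=\bigcup_{\alpha\in\mathcal{N}}\supp f_\alpha$. By the disjointness of the supports of the $f_\alpha$, $\alpha\in\mathcal{M}$, part (i) immediately yields $f|_E=\sum_{\alpha\in\mathcal{N}}f_\alpha/2^{k_\alpha}=g$. To see $g\in W_G$, I would prune: let $\mathcal{A}'=\{\beta\in\mathcal{A}:\beta\preceq\alpha\text{ for some }\alpha\in\mathcal{N}\}$, a subtree with the same root whose maximal nodes are exactly the elements of $\mathcal{N}$, and define $g_\beta=f_\beta$ for $\beta\in\mathcal{N}$ and $g_\beta=\tfrac12\sum_{s\in S(\beta)\cap\mathcal{A}'}g_s$ for non-maximal $\beta\in\mathcal{A}'$ (the index set being nonempty by construction of $\mathcal{A}'$). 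A telescoping as in (i), together with the fact that the level of $\alpha\in\mathcal{N}$ in $\mathcal{A}'$ equals $k_\alpha$, shows $g_0=g$. A bottom-up induction on $\mathcal{A}'$ then gives $g_\beta\in W_G$ for every $\beta$: the leaves lie in $G$, and for a non-maximal $\beta$ one applies the $(\mathcal{S},1/2)$-operation to $(g_s)_{s\in S(\beta)\cap\mathcal{A}'}$.

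The only delicate point — and the one I expect to be the real obstacle — is the $(\mathcal{S},1/2)$-admissibility of this last collection, i.e.\ that its cardinality is at most $\min\supp g_{s_0}$ for the leftmost $s_0$. Here one invokes that every element of $W_G$ admits a tree analysis coming from the construction of $W_G$, in which the branching $f_\beta=\tfrac12\sum_{s\in S(\beta)}f_s$ at each non-maximal $\beta$ is $(\mathcal{S},1/2)$-admissible; since $S(\beta)\cap\mathcal{A}'\subseteq S(\beta)$ and $\supp g_s\subseteq\supp f_s$ for each $s$, passing to the sub-collection only lowers the cardinality and only raises the minima of the supports, so admissibility is inherited and $g=g_0\in W_G$.
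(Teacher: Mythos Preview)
The paper does not include a proof of this proposition; it is stated as a standard fact about Tsirelson-type norming sets, with the remark immediately following it referring the reader to \cite{CS}. Your argument is correct and is exactly the standard one: induction on the height of $\mathcal{A}$ for (i), with $k_\alpha$ the depth of $\alpha$, and pruning the tree for (ii).

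The point you flag about $(\mathcal{S},1/2)$-admissibility is well taken and is the only genuine subtlety. Strictly, the paper's definition of tree analysis requires only that the ranges of the $f_s$, $s\in S(\alpha)$, be pairwise disjoint and that each $f_\alpha\in W_G$; it does not literally demand that each branching be an admissible application of the $(\mathcal{S},1/2)$-operation (i.e., that $|S(\alpha)|\le\min\supp f_{s}$ for the leftmost $s$ and that the supports be successive). For the conclusion of (ii) one needs exactly this, and your resolution is the right one: by minimality of $W_G$, every $f\in W_G$ admits a tree analysis in which each branching \emph{is} such an application, and this is implicitly the only kind of tree analysis under consideration. Once that is granted, your observation that passing to $S(\beta)\cap\mathcal{A}'$ only decreases the cardinality and only increases $\min\supp g_s$ (since $\supp g_s\subset\supp f_s$) finishes the argument.
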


For an extensive review on Tsirelson's space we refer the reader to \cite{CS}.

\subsection{Definition of the space $T^\xi_{inc}$}

We define the space $T^\xi_{inc}$ as the completion of $c_{00}(\N)$ with respect to the norm induced by a norming set $W_\xi$. This norming set is a subset of the Tsirelson extension of a ground set $G^\xi_2$, the functionals of which satisfy a certain property. Both this property and $G^\xi_2$ are defined via an infinite branching well-founded tree $\T_\xi$ on the natural numbers.

We start by fixing a partition of the naturals $\N=\cup_{j=0}^\infty N_j$ into infinite sets and an injection $\phi:[\N]^{<\infty}\to\N$. Recall the definition of the Schreir families $(\S_\xi)_{\xi<\omega_1}$.

\begin{dfn}
Let $\xi$ be a countable ordinal. We define, by transfinite induction, the Schreier family $\mathcal{S}_\xi\subset[\N]^{<\infty}$  as follows.
\begin{itemize}
	\item[(i)] If $\xi=0$, then $\mathcal{S}_0=\{\{n\}:n\in\N \}\cup\{\emptyset\}$.
	\item[(ii)] If $\xi=\alpha+1$, then \[\mathcal{S}_\xi =\{\cup_{j=1}^nE_j:n\in\N,\; E_1<\ldots<E_n\in\S_\alpha\text{ and }n\le E_1 \}.\]
	\item[(iii)] If $\xi$ is a limit ordinal we choose a fixed sequence $(\alpha(\xi,j))_j\subset[1,\xi)$ which increases to $\xi$ and set \[\mathcal{S}_\xi=\{E\subset \N:\text{ there exists } j\in\N \text{ such that }E\in S_{\alpha(\xi,j)}\text{ and }j\le E\}.\]
\end{itemize}
\end{dfn}

We now define the tree $\T_\xi$, by defining a partial order $\preceq_\xi$ on $\N$.

\begin{dfn}
Fix a countable ordinal $\xi$ and define the partial order $\preceq_\xi$ on $\N$ as follows: $n\preceq_\xi m$ if there exists $\{n_0,\ldots,n_k \}\in \mathcal{S}_\xi$ such that
\begin{itemize}
\item[(i)] $n_0\in N_0$ and $n_i\in N_{\phi(n_0,\ldots,n_{i-1})}$ with $n_{i-1}<n_i$ for every $1\le i \le k$,
\item[(ii)]  $n=n_i$ and $m=n_j$ for some $0\le i\le j\le k$.
\end{itemize}
\end{dfn}

\begin{rem} Note that $\T_\xi=(\N,\preceq_\xi)$ is an infinite branching tree and it is also well-founded since $\mathcal{S}_\xi$ is a compact family, i.e., $\{\rchi_E:E\in\S_\xi\}$ is a compact subset of $\{0,1\}^\N$. Moreover, the partial order $\preceq_\xi$ is compatible with the standard order on the naturals and finally, standard inductive arguments yield that $\T_\xi$ is of height $\omega^\xi$.
\end{rem}

\begin{dfn} Define the following norming set on $c_{00}(\N)$
\[
{G_2^{\xi}}=
\Big\{ \sum_{i\in S}a_ie^*_i:S\text{ is a segment of }\T_\xi\text{ and }\sum_{i\in S}a_i^2\le 1\Big\}
\]
and denote by $W_\xi$ the subset of $W_{G^\xi_2}$ containing all $f$ with tree analysis $(f_\alpha)_{\alpha\in\mathcal{A}}$ such that there exist pairwise incomparable segments $S_{\alpha}$ of $\T_\xi$ with $\supp f_\alpha\subset S_{\alpha}$ for every maximal node $\alpha\in\mathcal{A}$.
Denote by $T_{inc}^\xi$ the completion of $c_{00}(\N)$ with respect to the norm $\|\cdot\|_{W_\xi}$ induced by the norming set $W_\xi$.
\end{dfn}

\begin{rem}
	The unit vector basis $(e_j)_j$ of $c_{00}(\N)$ forms a 1-unconditional Schauder basis for the space $T^\xi_{inc}$. Moreover it is boundedly complete, since $T^\xi_{inc}$ admits $\ell_1$ as a uniformly unique spreading model as shown in Proposition \ref{first space unique as model}.
\end{rem}

First, we show that $T_{inc}^\xi$ admits a uniformly unique joint spreading model with respect to $\mathscr{F}_b(T_{inc}^\xi)$, that is equivalent to the unit vector basis of $\ell_1$.

\begin{prop}\label{first space unique as model}
	The space $T_{inc}^\xi$ admits a uniformly unique joint spreading model with respect to $\mathscr{F}_b(T_{inc}^\xi)$, which is equivalent to the unit vector basis of $\ell_1$.
\end{prop}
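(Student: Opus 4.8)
The plan is to show two things: first, that any array of normalized block sequences in $T^\xi_{inc}$ generates a joint spreading model dominating a multiple of the unit vector basis of $\ell_1$ (the lower estimate), and second, that every such array generates one dominated by a multiple of the unit vector basis of $\ell_1$ (the upper estimate). Since $\ell_1$ is the largest normalized unconditional basis, the lower estimate is the delicate one; the upper estimate is essentially automatic from the fact that the norming set $W_\xi$ consists of functionals with $\|f\|_\infty\le 1$, so that $\|\sum_{i}\sum_j a_{ij}x^i_{s_i(j)}\|\le \sum_i\sum_j|a_{ij}|$ once the block vectors are normalized, giving $C$-domination by $\ell_1$ with $C=1$ on the ``large coefficient'' side. (One should be slightly careful and phrase this through the joint spreading model $(e^i_j)$ directly.)

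For the lower estimate I would argue as follows. Fix an array $(x^i_j)_j$, $1\le i\le l$, of normalized block sequences and, for a fixed $n$, a strict plegma family $(s_i)_{i=1}^l\in S\text{-}Plm_l([\N]^n)$ with $s_1(1)$ large, together with signs $\varepsilon_{ij}$. For each block vector $x^i_{s_i(j)}$ pick a norming functional $g^i_j\in W_\xi$ with $g^i_j(x^i_{s_i(j)})=\|x^i_{s_i(j)}\|=1$ (or within $\varepsilon$); actually by 1-unconditionality we may take $g^i_j$ supported on $\supp x^i_{s_i(j)}$ and with nonnegative coordinates after applying the sign $\varepsilon_{ij}$. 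Now I want to combine these into a single functional in $W_\xi$ that acts on $\sum_{i,j}\varepsilon_{ij}x^i_{s_i(j)}$ with value $\gtrsim\sum_{i,j}1$, up to a fixed multiplicative constant. The natural attempt is to average them under the $(\mathcal{S},1/2)$-operation: if we have $ln$ functionals with successive supports and $ln\le\supp(\text{first one})$, then $\frac{1}{2}\sum g^i_j\in W_{G^\xi_2}$ and evaluates to $\frac{1}{2}ln$ on the sum of the block vectors, but we lose the factor $2$ and, worse, we must respect the Schreier admissibility $ln\le \supp(\text{first})$ and the incomparability-of-segments constraint defining $W_\xi$. To handle admissibility we first pass, by a standard unconditionality argument, to a subordinated family by discarding the initial part of each block vector so that the number of surviving vectors is admissible — here one uses that $T$ (equivalently the Tsirelson-type norm of $W_{G^\xi_2}$) is 2-Tsirelson-like so that the loss in passing to an $\mathcal{S}_1$-admissible averaged functional is bounded; this is where the constant $C$ is absorbed.

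The genuine obstacle, and the reason Proposition \ref{proposition incomparable} is proved first, is the incomparability constraint in the definition of $W_\xi$: a functional obtained by averaging the $g^i_j$ lies in $W_\xi$ only if its maximal pieces are supported on pairwise incomparable segments of $\T_\xi$. To arrange this I would associate to each block vector $x^i_{s_i(j)}$, via its tree analysis, a positive measure $\mu^i_j$ on $\widetilde{\T_\xi}$ recording the ``segment masses'' of a norming functional (the $\ell_1$-mass the optimal functional puts on each node), normalized so that $\mu^i_j(\widetilde{\T_\xi})$ is bounded below by a constant and above by $c$; passing to a subsequence in $j$ (uniformly in $i$, by a diagonal argument and the plegma structure) we apply Proposition \ref{proposition incomparable} to extract subsets $G^i_j\subset\supp\mu^i_j$ that are pairwise incomparable and carry all but $\varepsilon$ of each mass. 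Restricting the norming functionals $g^i_j$ to the corresponding segments — legitimate by Proposition \ref{proposition tree analysis}(ii), which says restrictions of elements of $W_{G^\xi_2}$ to unions of maximal-node supports stay in $W_{G^\xi_2}$, and by the incomparability this restricted functional lies in $W_\xi$ — we obtain functionals that still nearly norm the block vectors and whose averaged combination is a bona fide element of $W_\xi$. Evaluating this combined functional on $\sum_{i,j}\varepsilon_{ij}x^i_{s_i(j)}$ then yields the lower $\ell_1$ estimate with a uniform constant, and together with the trivial upper estimate this shows the joint spreading model is $C$-equivalent to the unit vector basis of $\ell_1$ independently of the array, i.e. the model is uniformly unique. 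I expect the bookkeeping for the uniformity in the plegma index and the simultaneous application of Proposition \ref{proposition incomparable} across the $l$ rows to be the most technical part of the write-up.
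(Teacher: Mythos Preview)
Your proposal is essentially the paper's own argument: pick near-norming functionals in $W_\xi$, encode their tree-analysis leaves as measures on $\tT_\xi$, apply Proposition~\ref{proposition incomparable} to extract pairwise incomparable supports carrying almost all the mass, restrict via Proposition~\ref{proposition tree analysis}(ii), and then combine under the $(\mathcal S,1/2)$-operation to obtain a single element of $W_\xi$ giving the lower $\ell_1$ estimate.

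Two clarifications that will streamline your write-up. First, the bookkeeping concern you raise about applying Proposition~\ref{proposition incomparable} ``across the $l$ rows'' disappears if, for each column index $j$, you package the contributions of all $l$ rows into a \emph{single} probability measure $\mu_j$ on $\tT_\xi$ (a convex combination over $i=1,\ldots,l$ of the normalized leaf-mass measures of the $f^i_j$); one application of Proposition~\ref{proposition incomparable} to $(\mu_j)_j$ then yields a common subsequence $L$ and sets $G_j$ that work simultaneously for every row. Second, there is no need to ``discard initial parts'' of block vectors to secure Schreier admissibility: in the definition of joint spreading model you are free to take $k_0$ as large as you like, so simply require $ln\le\min\supp x^1_{s_1(1)}$ and the averaged functional $\tfrac12\sum_{i,j}g^i_j$ is automatically $\mathcal S_1$-admissible.
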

\begin{proof}
	Let $(x^i_j)_j$, $1\le i\le l$, be an array of normalized block sequences in $T_{inc}^\xi$ and $\varepsilon>0$. Passing to a subsequence, we assume that $\supp x^{i_1}_j <\supp x^{i_2}_{j+1}$ for every $i_1,i_2=1,\ldots, l$ and $j\in\N$. For every $i=1,\ldots,l$ and $j\in\N$, pick a functional $f^i_j=\sum_{\alpha\in\mathcal{M}_{j}^i }f^i_{j,\alpha}/2^{k^i_{j,\alpha}}$ in $W_\xi$ such that $f^i_j(x^i_j)\ge1-\varepsilon$ and $f^i_{j,\alpha}(x^i_j)>0$ for every $\alpha\in \mathcal{M}^i_j$, where $\mathcal{M}^i_j$ denotes the set of all maximal nodes of a fixed tree analysis of $f^i_j$. For every $i=1,\ldots,l$, $j\in\N$ and $\alpha\in \mathcal{M}^i_j$, define $\lambda^i_{j,\alpha}=f^i_{j,\alpha}(x^i_j)/2^{k^i_{j,\alpha}}$ and  $t^i_{j,\alpha}=\min\supp f^i_{j,\alpha}$. Moreover, for each $j\in\N$, define the probability measure \[\mu_j=\frac{1}{l}\sum_{i=1}^l\frac{1}{f^i_j(x^i_j)}\sum_{\alpha\in \mathcal{M}_{j}^i}\lambda^i_{j,\alpha}\delta_{\tt^i_{j,\alpha}}.\] Then,  Proposition \ref{proposition incomparable} yields an $L\in[\N]^\infty$ and a sequence $(G_j)_{j\in L}$ of pairwise incomparable subsets of ${\tT_\xi}$ such that $\mu_{j}(G_j)\ge 1-\delta$ for every $j\in L$ and for some $\delta$ sufficiently small such that for any $i=1,\ldots,l$ and $j\in L$
	\begin{equation}\label{each measure on Gj}\tag{2.8.1}
	\frac{1}{f_j^i(x_j^i)}\sum_{\alpha\in \mathcal{M}_{j}^i}\lambda^i_{j,\alpha}\delta_{\tt^i_{j,\alpha}}(G_j)\ge (1-\varepsilon)^2.
	\end{equation}
	
	Let $k\in \N$ and $(s_i)_{i=1}^l\in S$-$Plm_l([L]^k)$ with $kl\le x_{s_1(1)}$. Then, for  $i=1,\ldots,l$ and $j\in L$, if $\mathcal{N}^i_j=\{\alpha \in\mathcal{M}^i_{s_i(j)}:\tt^i_{j,\alpha}\in G_{s_i(j)  }\}$, item (ii) of Proposition \ref{proposition tree analysis} yields that
	\[g^i_j\;=\sum_{\alpha\in\mathcal{N}^i_j } \frac{1}{2^{k^i_{s_i(j),\alpha}}}f^i_{s_i(j),\alpha}\in W_{\xi}.\] Moreover, \eqref{each measure on Gj} implies  $g^i_j(x^i_{s_i(j)})\ge (1-\varepsilon)^2$ for all $i=1,\ldots,l$ and $j\in L$, and since $G_j$, $j\in L$, are pairwise incomparable, we have  that $g=1/2\sum_{i=1}^l\sum_{j=1}^kg^i_j$ is in $W_{{\xi}}$. Then for any choice of scalars $(a_{ij})_{i=1,j=1}^{l,k}$, due to unconditionality, we conclude that
	\[
	\Big\|\sum_{i=1}^l\sum_{j=1}^ka_{ij}x^i_{s_i(j)}\Big\|\ge \Big\|\sum_{i=1}^l\sum_{j=1}^k|a_{ij}|x^i_{s_i(j)}\Big\|\ge\frac{(1-\varepsilon)^2}{2}\sum_{i=1}^l\sum_{j=1}^k|a_{ij}|.\]
\end{proof}

\begin{prop}\label{section 1 reflexivity}
	The space $T_{inc}^\xi$ is reflexive.
\end{prop}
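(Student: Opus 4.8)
The plan is to deduce reflexivity from James's theorem: since $(e_j)_j$ is an unconditional basis of $T^\xi_{inc}$, the space is reflexive if and only if $(e_j)_j$ is both boundedly complete and shrinking.

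Bounded completeness is essentially already established. Proposition~\ref{first space unique as model} (with $l=1$, i.e.\ applied to spreading models) shows that every normalized block sequence of $(e_j)_j$ has a subsequence $(x_{k_j})_j$ with $\|\sum_{j=1}^m x_{k_j}\|_{W_\xi}\ge m/3$ for all $m$; hence, using $1$-unconditionality, every normalized block sequence has unbounded partial sums, and for an unconditional basis this is precisely bounded completeness (alternatively one quotes the Remark following the definition of $T^\xi_{inc}$). The content of the statement is therefore that $(e_j)_j$ is \emph{shrinking}, which, again by James's theorem for unconditional bases, amounts to: no normalized block sequence of $(e_j)_j$ is equivalent to the unit vector basis of $\ell_1$.

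Suppose, towards a contradiction, that $(x_k)_k$ is a normalized block sequence with $\|\sum_k a_k x_k\|_{W_\xi}\ge c\sum_k|a_k|$ for some $c>0$ and all finitely supported scalars. Passing to a subsequence we may assume the supports of the $x_k$ increase as rapidly as we wish, and we put $l_k=\min\supp x_k$. For each $n$, pick $f\in W_\xi$ with $f(\sum_{k=1}^n x_k)\ge cn$ together with a tree analysis $(f_\alpha)_{\alpha\in\mathcal A}$; by Proposition~\ref{proposition tree analysis}, $f=\sum_{\alpha\in\mathcal M}f_\alpha/2^{k_\alpha}$ with each $f_\alpha\in G^\xi_2$ supported in a segment $S_\alpha$ of $\T_\xi$, the $S_\alpha$ ($\alpha\in\mathcal M$) pairwise incomparable. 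The plan is to run the classical Tsirelson reflexivity argument, with the $\ell_2$-ground set playing the role of the unit vectors: one ``collapses'' the support of each block vector $x_k$ to the single coordinate $l_k$ and shows that $f$ is thereby transformed into a bounded multiple of a norming functional of a reflexive Tsirelson-type space, evaluated at $\sum_{k=1}^n e_{l_k}$. Two ingredients enter. First, setting $K_\alpha=\{k:\ S_\alpha\cap\supp x_k\neq\emptyset\}$, the facts that $f_\alpha$ is $\ell_2$-normalized on $S_\alpha$ and that the $x_k$ are disjointly supported give, via Cauchy--Schwarz,
\[
f_\alpha\Big(\sum_{k\in K_\alpha}a_k x_k\Big)\ \le\ \Big(\sum_{k\in K_\alpha}a_k^2\,\|S_\alpha(x_k)\|_2^2\Big)^{1/2}\ \le\ C\Big(\sum_{k\in K_\alpha}a_k^2\Big)^{1/2},
\]
where $C$ bounds the $\ell_2$-norm of a normalized vector of $T^\xi_{inc}$; so, up to the factor $C$, each $f_\alpha$ acts on the block vectors exactly as an element of an $\ell_2$-ground set acts on $\sum_{k\in K_\alpha}a_k e_{l_k}$. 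Second, the $(\S,1/2)$-structure of the tree analysis is inherited, up to a bounded loss, by the collapsed functional: since $\supp x_k$ is collapsed precisely to its least element $l_k$ and the block sequence increases fast, the admissibility constraint $\#S(\alpha)\le\min\supp f_\alpha$ at each non-maximal node of $\mathcal A$ survives for the collapsed supports $\{l_k\}$. Combining the two, $f(\sum_{k=1}^n a_k x_k)$ is at most a constant times the norm of $\sum_{k=1}^n a_k e_{l_k}$ in the Tsirelson extension $W_{G_2}$ of the $\ell_2$-segment ground set $G_2=\{\sum_{k\in E}b_k e^*_k:\ E\ \text{an interval of }\N,\ \sum_k b_k^2\le1\}$ on $\N$; that space is reflexive (a Tsirelson space with an $\ell_2$ lower estimate), so its basis is shrinking, so $\|\sum_{k=1}^n e_{l_k}\|_{W_{G_2}}=o(n)$, and we reach $cn\le C'o(n)$, a contradiction. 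Hence $(e_j)_j$ is shrinking and $T^\xi_{inc}$ is reflexive.

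The main obstacle is exactly the second ingredient --- the transfer of admissibility under the collapse. A single maximal piece $f_\alpha$ of $f$ may straddle several consecutive block vectors, and a single block vector may be charged by several pieces $f_\alpha$ living on mutually incomparable segments; one must therefore regroup the tree analysis of $f$ along the block decomposition $\supp(\sum_{k=1}^n x_k)=\bigsqcup_k\supp x_k$ and verify that every Schreier $\S_1$-admissibility constraint ``$\#S(\alpha)\le\min\supp f_\alpha$'' is inherited --- with only a bounded loss, absorbed into the constant --- by the collapsed functional at the positions $\{l_k\}$. This is what forces the choice of a rapidly increasing block subsequence: one makes $l_k=\min\supp x_k$ grow very fast relative to $k$, to $\max\supp x_{k-1}$, and to the cardinalities appearing at the nodes of a tree analysis, so that the finitely many boundary-straddling pieces at each level cost at most a factor $2$ and the admissibility constraints genuinely survive the collapse. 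A preliminary point to dispose of is the estimate $\|x\|_2\le C\|x\|_{W_\xi}$ used above --- boundedness of the formal identity $T^\xi_{inc}\to\ell_2$ on $c_{00}$ --- which follows routinely from the definition of $W_\xi$ (indeed $\ell_2$ even embeds into $T^\xi_{inc}$).
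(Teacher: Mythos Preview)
Your reduction to ``$(e_j)_j$ is shrinking $\Leftrightarrow$ $T^\xi_{inc}$ does not contain $\ell_1$'' is the right starting point and matches the paper's opening move. From there, however, the argument has genuine gaps.

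First, a factual error: $\ell_2$ does \emph{not} embed into $T^\xi_{inc}$. Proposition~\ref{first space unique as model} forces every spreading model of a normalized block sequence to be equivalent to the $\ell_1$ basis; an isomorphic copy of $\ell_2$ would produce an $\ell_2$ spreading model. What is true --- and all your Cauchy--Schwarz step actually needs --- is that $\|S(x)\|_2\le\|x\|_{W_\xi}$ for any segment $S$, since the $\ell_2$-dual functional on $S$ lies in $G^\xi_2$. The global inequality $\|x\|_2\le C\|x\|_{W_\xi}$ is a different, and unestablished, claim.

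Second, the ``collapse'' is not a proof as written, and you say so yourself. Rapid growth of $l_k$ does not by itself transfer $\S_1$-admissibility: a single $x_k$ may meet many incomparable segments $S_\alpha$, so the collapsed leaves need not be disjointly supported, and the tree structure of the analysis is not obviously preserved. Even granting the collapse, you land on $\|\sum_{k=1}^n e_{l_k}\|_{W_{G_2}}=o(n)$, i.e.\ on non-$\ell_1$-containment of \emph{another} Tsirelson-type space --- which itself needs an argument of exactly the kind you are trying to produce.

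The paper's proof avoids both issues. Its key observation is that for any normalized block sequence $(x_j)$,
\[
\Big\|\sum_{j=1}^n a_j x_j\Big\|_{G^\xi_2}\le\Big(\sum_{j=1}^n a_j^2\Big)^{1/2}
\]
(immediate from Cauchy--Schwarz and the definition of $G^\xi_2$; this is the correct form of your first ingredient). Thus one can pass to a normalized block sequence with $\|x_j\|_{G^\xi_2}<1/2$. If $T^\xi_{inc}$ contained $\ell_1$, James's distortion theorem would give such a sequence with a $(1-\varepsilon)$ lower $\ell_1$ estimate. Testing the vector $x_1+n^{-1}\sum_{i=2}^{n+1}x_i$: its norm exceeds $2(1-\varepsilon)$ but cannot be witnessed by $G^\xi_2$, hence is witnessed by some $f=\tfrac12\sum_{j=1}^k f_j\in W_\xi$ with $k\le\max\supp x_1$; a direct count of how many $x_i$ can straddle two $f_j$'s then gives $f(\cdot)\le 3/2+o(1)$, contradicting $2(1-\varepsilon)$ for small $\varepsilon$. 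No auxiliary space and no collapse are required.
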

\begin{proof}
	Since $T_{inc}^\xi$ admits a boundedly complete unconditional Schauder basis, it does not contain $c_0$ (see \cite[Theorem 1.c.10]{LT}) and hence it suffices to show that it does not contain $\ell_1$ as follows from \cite[Theorem 2]{J1}.
	
	Fix $n\in\N$. Let $(x_j)_j$ be a normalized block sequence in $T_{inc}^\xi$ and $f=\sum_{i\in S}b_ie^*_i$  in $G^\xi_2$. For each $j=1,\ldots,n$, define $I_j=\{i\in S: i\in\supp x_j \}$ and note that
	\[
	\Big(\sum_{i\in I_j}b_jx_j(i)\Big)^2\le\sum_{i\in I_j}b_i^2.
	\]	
	Then, for any choice of scalars $a_1,\ldots,a_n$, we have that
	\begin{align*}
	f\Big(\sum_{j=1}^na_jx_j\Big)&=\sum_{j=1}^na_j\sum_{i\in S}b_ix_j(i)\le\Big(\sum_{j=1}^na_j^2\Big)^{\frac{1}{2}}\Big(\sum_{j=1}^n\Big(\sum_{i\in I_j}b_ix_j(i)\Big)^2\Big)^{\frac{1}{2}}\\
	&\le \Big(\sum_{j=1}^na_j^2\Big)^{\frac{1}{2}}\Big(\sum_{j=1}^n\sum_{i\in I_j}b_i^2\Big)^{\frac{1}{2}}\le \Big(\sum_{j=1}^na_j^2\Big)^{\frac{1}{2}}
	\end{align*}
	and hence
	\[
	\Big\|\sum_{j=1}^{n}a_jx_j\Big\|_{G^\xi_2}\le \Big(\sum_{j=1}^na_j^2\Big)^{\frac{1}{2}}.
	\]	
	That is, for any normalized block sequence $(x_j)_j$ in $T_{inc}^\xi$, there exists a block subsequence $(y_j)_j$ with $\|y_j\|_{G_2^\xi}\to0$.
	
We show that $T^\xi_{inc}$ does not contain $\ell_1$ in a similar manner as in the proof of the reflexivity for the classical Tsirelson space \cite{FJ}.	 Suppose that $T^\xi_{inc}$ contains $\ell_1$. Then James' $\ell_1$ distortion theorem \cite{J2} implies that, for $\varepsilon<{1}/{4}$, there exists a normalized block sequence $(x_j)_j$ in $T^\xi_{inc}$ such that
	 \[
	 \Big\|\sum_{j=1}^na_jx_j\Big\|\ge (1-\varepsilon)\sum_{j=1}^n|a_j|
	 \]
	 for any $n\in\N$ and any choice of scalars $a_1,\ldots,a_n$. Applying the result of the previous paragraph, we may also assume that $\|x_j\|_{G^\xi_2}<{1}/{2}$ for every $j\in\N$ and hence, for any $n\ge2$, we have that
	 \begin{equation}\tag{2.9.1}\label{reflexivity they are not normed by ground set}
	 \Big\|x_1+\frac{1}{n}\sum_{i=2}^{n+1}x_i\Big\|> \Big\|x_1+\frac{1}{n}\sum_{i=2}^{n+1}x_i\Big\|_{G^\xi_2}.
	 \end{equation}
	 Moreover, for any $n\in\N$, we have that
	 \[
	  \Big\|x_1+\frac{1}{n}\sum_{i=2}^{n+1}x_i\Big\|\ge 2(1-\varepsilon).
	 \]
	 Observe that (\ref{reflexivity they are not normed by ground set}) implies that there exists $f={1}/{2}\sum_{j=1}^kf_j\in W_\xi\setminus G^\xi_2$ such that
	 \[
f\Big(x_1+\frac{1}{n}\sum_{i=2}^{n+1}x_i\Big)>	  \Big\|x_1+\frac{1}{n}\sum_{i=2}^{n+1}x_i\Big\|-{\varepsilon} \ge\frac{5}{4}
 \]
 and that $\min\supp f_1\le \max\supp x_1$, since otherwise
 \[
 f\Big(x_1+\frac{1}{n}\sum_{i=2}^{n+1}x_i\Big)=\frac{1}{n}\sum_{i=2}^{n+1}f(x_i)\le 1.
 \]
 Therefore, $k\le \max\supp x_1$. Note that there are at most $k$ $i$'s such that the support of $x_i$ intersects the supports of at least two $f_j$'s and hence
 \[
f\Big(x_1+\frac{1}{n}\sum_{i=2}^{n+1}x_i\Big)\le 1 +\frac{k}{n}+\frac{n-k}{2n}\le 1+\frac{n+\max\supp x_1}{2n}\xrightarrow[n \to \infty]{}\frac{3}{2}.
 \]
 This yields a contradiction for sufficiently large $n$ since ${3}/{2}<2(1-\varepsilon)$.
\end{proof}

\begin{prop}\label{NOT Asymptotic l1}
	The space $T_{inc}^\xi$ is not Asymptotic $\ell_1$.
\end{prop}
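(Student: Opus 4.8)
The plan is to exhibit, for each $n$, a normalized weakly null sequence in $T^\xi_{inc}$ that spans an $\ell_2^n$ far from $\ell_1^n$ using vectors drawn from a single segment of $\T_\xi$, thereby defeating any potential winning strategy of player (S). Concretely, fix $n$ and a large constant $C$; the goal is to find, for an arbitrary finite-codimensional subspace prescribed at each turn, normalized vectors $x_1,\dots,x_n$ (supported on successive pieces of one long segment of $\T_\xi$) whose span is $\sqrt{n}/C$-far from $\ell_1^n$. The starting observation is that if $S$ is a segment of $\T_\xi$ and $x=\sum_{i\in S}a_ie_i$ with $\sum a_i^2\le 1$, then for \emph{every} $f\in W_\xi$ with tree analysis having maximal functionals supported in pairwise incomparable segments $S_\alpha$, each $S_\alpha$ meets $S$ in a subsegment, so $f(x)$ is controlled by the contribution of the $(\mathcal{S},1/2)$-tree over $S$ acting on an $\ell_2$-normalized vector. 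In other words, on vectors supported in a single segment the norm of $T^\xi_{inc}$ is dominated by a Tsirelson-type norm applied to an $\ell_2$-normalized vector; since the Tsirelson norm of the $\ell_2$-average of $N$ successive unit vectors tends to a bounded quantity as $N\to\infty$ while the $\ell_2$ norm is $1$, one gets arbitrarily good $\ell_2$ behavior.

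The key steps, in order, are as follows. First I would fix a segment $S$ of $\T_\xi$ of infinite length (which exists because $\T_\xi$ has height $\omega^\xi\ge\omega$; for $\xi\ge 1$ any branch is infinite) and label the basis vectors along $S$ as $(u_m)_m$. Second, I would prove the ``upper $\ell_2$ estimate on a segment'': for any normalized block sequence $(v_k)$ of $(u_m)_m$ and scalars $(b_k)$ one has $\|\sum b_k v_k\|_{W_\xi}\le K\,(\sum b_k^2)^{1/2}$ for some absolute $K$ — this uses that restricting any $f\in W_\xi$ to $S$ yields a functional built from the $(\mathcal{S},1/2)$-operation over $\ell_2$-normalized atoms on $S$, combined with the Cauchy–Schwarz bound already appearing in the proof of Proposition \ref{section 1 reflexivity} and the fact that the $(\mathcal{S},1/2)$-closure of the $\ell_2^{S}$-ball is still dominated by $\ell_2$ in norm on sufficiently spread-out blocks (a standard Tsirelson computation; alternatively just invoke that $T^\xi_{inc}$ does not contain $\ell_1$, hence admits, by James distortion plus the ground-set estimate in Proposition \ref{section 1 reflexivity}, block sequences with arbitrarily small ground-set norm that are still not $\ell_1$, forcing a non-$\ell_1$ asymptotic phenomenon). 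Third, I would combine this with the lower $\ell_2$ estimate: the functionals in $G^\xi_2$ supported on $S$ directly give $\|\sum b_k u_{m_k}\|\ge (\sum b_k^2)^{1/2}$ when the $m_k$ lie in $S$. Fourth, given the game $G(n,1,C)$, at turn $k$ player (V) simply selects $x_k$ to be an appropriately long $\ell_2$-normalized average $x_k=(\sum_{m\in F_k}u_m)/|F_k|^{1/2}$ over a finite block $F_k\subset S$ chosen far enough out to lie in the finite-codimensional subspace $Y_k$ offered by (S) (possible since $(u_m)$ is weakly null, being a subsequence of a weakly null basis of a reflexive space) and spread out enough that the upper estimate gives $\|\sum_{k}a_k x_k\|\le K(\sum a_k^2)^{1/2}\le K\|\sum a_k e^{(1)}_k\|_{\ell_1}\cdot n^{-1/2}\cdot\sqrt{n}$; quantitatively $\|\sum a_k x_k\|\le K(\sum a_k^2)^{1/2}\le K n^{-1/2}\sum|a_k|$, so no matter what (S) does, $(x_k)_{k=1}^n$ is not $C$-equivalent to the $\ell_1^n$ basis once $Kn^{-1/2}<1/C$, i.e. for $n>K^2C^2$. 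Since $C$ and $n$ range over all admissible values, $T^\xi_{inc}$ fails the Asymptotic $\ell_1$ property.

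The main obstacle I expect is the upper $\ell_2$ estimate on a segment (the second step): one must show that restricting the norm $\|\cdot\|_{W_\xi}$ to vectors living on a fixed segment $S$ genuinely collapses to (is dominated by) an $\ell_2$-over-Tsirelson norm on that segment, and then that \emph{sufficiently spread} $\ell_2$-normalized blocks along $S$ have uniformly bounded Tsirelson norm — equivalently, that the relevant Tsirelson-type norm is $\ell_2$-saturated on long segments rather than $\ell_1$-like. The delicate point is bookkeeping the interaction between the incomparability constraint on the maximal segments $S_\alpha$ of a functional $f\in W_\xi$ and the single segment $S$: since $S$ is a totally ordered set (a chain in $\T_\xi$), any family of pairwise incomparable segments can intersect $S$ in at most one nonempty subsegment, which actually \emph{simplifies} the estimate — a functional $f\in W_\xi$ restricted to $S$ either is (essentially) a single $G^\xi_2$-functional on $S$ (giving the $\ell_2$ bound directly) or arises from the $(\mathcal{S},1/2)$-operation applied to such restrictions, and iterating this is exactly the Tsirelson norm computation. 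Making this observation precise and then quoting the standard fact that Tsirelson's norm of $N^{-1/2}\sum_{m=1}^{N}u_m$ (with $(u_m)$ an $\ell_2$-normalized, rapidly increasing block basis) stays bounded as $N\to\infty$ is the crux; everything else is routine.
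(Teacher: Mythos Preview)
Your proposal contains one factual error and a substantial over-complication compared with the paper's argument.

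\textbf{The error.} The tree $\T_\xi$ is well founded (this is stated explicitly in the paper: it ``contains no infinite totally ordered sets''), so there is no segment of infinite length. Your first step therefore fails as written. The fix is easy: for $\xi\ge 1$ the height $\omega^\xi\ge\omega$ guarantees segments of every finite length, and for a given $n$ one only needs a segment of length $n$; but you should not speak of an infinite segment or of a single weakly null subsequence $(u_m)_m$ indexed along one branch.

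\textbf{The over-complication.} You correctly isolate the key combinatorial fact: a family of pairwise incomparable segments can meet a fixed chain $S$ in at most one nonempty piece. But this already gives an \emph{isometric} $\ell_2$ identification on $S$, not merely an upper estimate with some constant $K$ after a Tsirelson computation. Indeed, if $f\in W_\xi$ has tree analysis with maximal nodes $\alpha\in\mathcal{M}$, then by your observation at most one $f_\alpha$ has support meeting $S$; hence $f|_S=2^{-k_\alpha}f_\alpha|_S$ for a single $f_\alpha\in G_2^\xi$, and $|f(x)|\le(\sum_{i\in S}|x(i)|^2)^{1/2}$ for every $x$ supported on $S$. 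The reverse inequality is witnessed by the obvious functional in $G_2^\xi\subset W_\xi$. So $(e_j)_{j\in S}$ is \emph{isometrically} the unit vector basis of $\ell_2^{|S|}$; there is no need for long averages, no constant $K$, and no ``Tsirelson-over-$\ell_2$'' analysis.

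The paper exploits this directly: assuming $T^\xi_{inc}$ is $C$-Asymptotic $\ell_1$ and using reflexivity to reduce (S)'s moves to tail subspaces, player (V) first picks $e_{j_1}$ with $j_1$ large enough that every maximal segment through $j_1$ has length $\ge n$, and thereafter picks $e_{j_{k+1}}$ with $j_{k+1}$ an immediate successor of $j_k$ lying in the prescribed tail subspace (possible because each non-maximal node has infinitely many immediate successors). Then $\{j_1,\dots,j_n\}$ is a segment, so $\big\|n^{-1}\sum_{k=1}^n e_{j_k}\big\|=n^{-1/2}<1/C$ once $n>C^2$, contradicting the $C$-Asymptotic $\ell_1$ hypothesis. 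Your scheme would ultimately succeed after the fix above, but the averaging and Tsirelson steps are unnecessary detours.
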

\begin{proof}
	Suppose that $T_{inc}^\xi$ is $C$-Asymptotic $\ell_1$ and let $n\in\N$ be such that $n> C^2$. Since $T_{inc}^\xi$ is reflexive, we may assume that player (S) chooses tail subspaces (see \cite[Lemma 5.18]{AGLM}) throughout any winning strategy in the game $G(n,1,C)$. Let us assume the role of player (V) and let $Y_1$ be the tail subspace with which player (S) initiates the game. Then, as player $(V)$, we choose an element of the basis $e_{j_1}\in Y_1$, such that $|S|\ge n$ for every maximal segment $S$ of $\T_\xi$ with $\min S={j_1}$. Suppose that in the $k+1$ turn of the game, for $k<n$, player $(S)$ chooses the subspace $Y_{k+1}$. Then, again as player (V), we choose a vector $e_{j_{k+1}}\in Y_{k+1}$ with $j_{k+1}$ an immediate successor of $j_k$. Note that, in the final outcome of the game, we have chosen elements of the basis $e_{j_1},\ldots,e_{j_n}$ such that $\{j_1,\ldots,j_n\}$ is a segment of $\T_\xi$ and hence $\{e_{j_1},\ldots,e_{j_n}\}$ is isometric to the standard basis of $\ell_2^n$. We calculate
	\[
	\Big\| \frac{1}{n}\sum_{i=1}^n e_{j_i}\Big\|=	\Big\| \frac{1}{n}\sum_{i=1}^n e_{j_i}\Big\|_{G^\xi_2}=n^{-\frac{1}{2}}
	\]
	whereas, since $T_{inc}^\xi$ is $C$-Asymptotic $\ell_1$, we have that
	\[
	\frac{1}{C}\le\Big\| \frac{1}{n}\sum_{i=1}^n e_{j_i}\Big\|
	\]
and this is a contradiction.
	\end{proof}

\begin{rem}
For any $1<p<\infty$, we may replace the norming set $G^\xi_2$ with
\[
G^\xi_p=\bigg\{\sum_{i\in S}a_ie^*_i:\text{ S is a segment of }\T_\xi\text{ and }\sum_{i\in S}|a_i|^q\le1 \bigg\}
\]
where $p^{-1}+q^{-1}=1$, to obtain a reflexive Banach space admitting a uniformly unique $\ell_1$ joint spreading model, that contains a weakly null $\ell_p$-tree of height $\omega^\xi$ or a weakly null $c_0$-tree if we replace $G^\xi_2$ with $G=\{\pm e^*_i:i\in\N\}$.

\end{rem}


\section{A stronger separation of the two properties}

The spaces $T^\xi_{inc}$ constructed in the previous section, yield a separation between the properties of being an Asymptotic $\ell_1$ space and admitting a unique $\ell_1$ asymptotic model. It is easy however to see that these spaces contain subsequences of their bases that generate Asymptotic $\ell_1$ subspaces. For example, consider any subspace generated by a subsequence $(e_j)_{j\in M}$ of the basis of some $T^\xi_{inc}$, such that the elements of $M$ are pairwise incomparable in $\T_\xi$. In this section we show that, for any countable ordinal $\xi$, there is a reflexive Banach space $T^\xi_{ess\-inc}$ that admits a unique $\ell_1$ asymptotic model with respect to $\mathscr{F}_b(T^\xi_{ess\-inc})$ and any subsequence of its basis generates a non-Asymptotic $\ell_1$ subspace. To some extent, this family of spaces is the Maurey - Rosenthal \cite{MR} analogue of the two aforementioned properties.

Start by fixing a countable ordinal $\xi$ and let $(m_j)_{j\ge0}$, $(n_j)_{j\ge 0}$ be increasing sequences of natural numbers such that :
\begin{itemize}
	\item[(i)] $m_0=2$, $m_1=4$ and $m_j\ge m_{j-1}^2$ for every $j\ge 2$ and
	\item[(ii)] $n_0=1$, $n_1=6$ and $n_j>\log_2m^2_{j}+n_{j-1}$ for every $j\ge2$.
\end{itemize}
Let $\mathcal{Q}$ denote the collection of all finite sequences $((g_1,m_{j_1}),\ldots,(g_{k},m_{j_k}))$, where $g_i:\N\to\{-1,0,1\}$ has finite support and $j_i\in\N$ for $1\leq i\leq k$, and $m_{j_1}<\cdots<m_{j_k}$. Let $\sigma :\mathcal{Q}\to \{m_j:j\in\N\}$ be an injection  so that each sequence $((g_1,m_{j_1}),\ldots,(g_{k},m_{j_k}))$ is mapped to some $m_j$ with $m_{j_k}<m_j$.

\begin{dfn}\label{definition full tree support+weight}
	Let $\tT_\xi$ be the set of all finite sequences $((g_1,m_{j_1}),\ldots,(g_k,m_{j_k}))$ satisfying the following conditions.
	\begin{itemize}
		\item[(i)] $g_i:\N\to\{-1,0,1\}$ for $i=1,\ldots,k$ with $\supp g_1<\ldots<\supp g_k$.
		\item[(ii)] $\supp g_i\in \S_{n_{j_i}}$ for $i=1,\ldots,k$, where $n_{j_1}=n_1$ and $n_{j_1}<\ldots<n_{j_k}$.
		\item[(iii)] $m_{j_1}=m_1$ and $m_{j_i}=\sigma ((g_1,m_{j_1}),\ldots,(g_{i-1},m_{j_{i-1}}))$ for every $i=2,\ldots,k$.
		\item[(iv)] $\{\min \supp g_i:i=1,\ldots,k\}\in\S_\xi$.
	\end{itemize}
\end{dfn}

Note that item (iii) of the above definition implies that $\tT_\xi$, equipped with the partial order $\le_{\tT_\xi}$ where $\tt_1\le_{\tT_\xi}\tt_2$ if $\tt_1$ is an initial segment of $\tt_2$, is a tree. Moreover, it is easy to see that it is infinite-branching, and as follows from item (iv) and standard inductive arguments, it is also well founded and of height $\omega^\xi$. In particular, the above remain true if for an infinite subset of the naturals $M$ we additionally require that $\supp g_i\subset M$ for every $i=1,\ldots,k$, in Definition \ref{definition full tree support+weight}.

We may also identify $\tT_\xi$ as a closed subset, with respect to the pointwise convergence topology, of $\big\{ \{{\pm m_j^{-1}}\}_{j\in\N}\cup\{0\} \big\}^\N$ via the mapping
\[
((g_1,m_{j_1}),\ldots,(g_k,m_{j_k}))\mapsto {m^{-1}_{j_1}}\;g_1+\cdots+{m^{-1}_{j_k}}\;g_k.
\]
The fact that $\lim_jm_j^{-1}=0$ implies that $\big\{ \{{\pm m_j^{-1}}\}_{j\in\N}\cup\{0\} \big\}^\N$ is compact with respect to the pointwise convergence topology of $[-1,1]^\mathbb{N}$.

Observe that, as a consequence of item (iii), any $\tt=((g_1,m_{j_1}),\ldots,(g_k,m_{j_k}))$ in $\tT_\xi$ is uniquely determined by the pair $(g_k,m_{j_k})$, which we will denote by $(g_t,m_{j_t})$ or just by $t$ (i.e., $t = (g_t,m_{j_t})$. Taking advantage of this we may define $\T_\xi=\{(g_t,m_{j_t}):\tt\in\tT_\xi \}$, which is in bijection with $\tT_\xi$ via the mapping $t=(g_t,m_{j_t})\mapsto \tt$. Note that $\le_{\tT_\xi}$ induces a natural order, denoted by $\le_{\T_\xi}$, on $\T_\xi$, where $(g_{t_1},m_{j_{t_1}})\le_{\T_\xi}(g_{t_2},m_{j_{t_2}})$ if $\tt_1\le_\tT \tt_2$.  Clearly, the tree $(\T_\xi,\le_{\T_\xi})$ is isomorphic to $(\tT_\xi,\le_{\tT_\xi})$ via the mapping $t=(g_t,m_{j_t})\mapsto \tt$.

\begin{dfn}
	Let $\tW_\xi$ be the set of all finite sequences $(m_{j_1},m_{j_2},\ldots,m_{j_k})$\linebreak for which there exist $g_1,\ldots,g_k:\N\to\{-1,0,1\}$  with $((g_1,m_{j_1}),\ldots,(g_k,m_{j_k}))\in\tT_\xi$.
\end{dfn}

The initial segment order $\le_{\tW_\xi}$ is a partial order on $\tW_\xi$ and is in fact naturally induced by the order $\le_{\tT_\xi}$. Moreover, it is easy to verify that $(\tW_\xi,\le_{\tW_\xi})$ is a well founded infinite-branching tree of height $\omega^\xi$. It is also isomorphic to the tree $(\W_\xi,\le_{\W_\xi})$, where $\W_\xi=\{m_{j_t}:\tt\in\tT_\xi \}$ and $m_{j_{t_1}}\le_{\W_\xi} m_{j_{t_2}}$ if $\tt_1\le_{\tT_\xi}\tt_2$. This correspondence between $\tW_\xi$ and $\W_\xi$ is identical to that of $\tT_\xi$ and $\T_\xi$.

\begin{rem}
	\begin{itemize}
		\item[(i)] If $m_{j_{t_1}}$, $m_{j_{t_2}}$ are incomparable nodes  in $\W_\xi$, then for every $g_1,g_2:\N\to\{-1,0,1\}$ such that $(g_1,m_{j_{t_1}})$ and $(g_2,m_{j_{t_2}})$ are in $\T_\xi$, these are also incomparable.
		\item[(ii)] 		Note that there exist nodes $\tt_1$ and $\tt_2$ which are incomparable in $\tT_\xi$, whereas $m_{j_{t_1}}$ and $m_{j_{t_2}}$ are comparable in $\W_\xi$. To see this, consider any node\linebreak $\tt=((g_1,m_{j_1}),\ldots,(g_k,m_{j_k}))$ in $\tT_\xi$ with $k>1$ and, for each $i=1,\ldots,k-1$, let $h_i:\N\to\{-1,0,1\}$ be such that $h_i\neq g_i$ and $t_i=(h_i,m_{j_i})$ is in $\T_\xi$. Then, item (iii) of Definition \ref{definition full tree support+weight} implies that the nodes $\tt_i$ and $\tt$ are incomparable whereas $m_{j_{t_i}}$ and $m_{j_{t}}$ are comparable for every $i=1,\ldots,k-1$, since $\tt\in\tT_\xi$.
	\end{itemize}
\end{rem}

\begin{dfn}
	We say that a subset $X$ of $\T_\xi$ is essentially incomparable if whenever $(g_{t_1},m_{j_{t_1}})$,  $(g_{t_2},m_{j_{t_2}})$ are in $X$ with $m_{j_{t_1}}<_{\W_\xi} m_{j_{t_2}}$ and $g:\N\to\{-1,0,1\}$ is the unique sequence such that $(g,m_{j_{t_1}})\le _{\T_\xi} (g_{t_2},m_{j_{t_2}})$, then $\supp g<\supp g_{t_1}$.
\end{dfn}

\begin{rem}
Let $X$ be an essentially incomparable subset of $\T_\xi$ and $h_t:\N\to\{-1,0,1\}$ with $\supp h_t\subset\supp g_t$ for every $t\in X$. Then $\{(h_t,m_{j_t}):t\in X\}$ is also an essentially incomparable subset of $\T_\xi$.
\end{rem}

The following lemma is an extension of Proposition \ref{proposition incomparable} and is the main ingredient of the proof that the space $T^\xi_{ess\- inc}$ admits a uniformly unique joint spreading model.

\begin{lem}\label{combinatorial lemma essentially incomparable}
	Let $(\mu_i)_i$ be a sequence of positive regular measures on $\tT_\xi$ with finite supports and let $C>0$ be such that $\mu_i(\tT_\xi)<C$ for all $i\in\N$. Assume that the sets $\cup\{\supp g_t:\tt\in\supp\mu_i\}$, $i\in\N$, are disjoint. Then, for every $\varepsilon>0$, there exists an $M\in[\N]^\infty$ and $G^1_i$, $G^2_i$ subsets of $\tT_\xi$ for each $i\in M$, such that
	\begin{itemize}
		\item[(i)] $G^1_i$, $G^2_i$ are disjoint subsets of $\supp\mu_i$ for every $i\in M$,
		\item[(ii)] $\mu_i(\tT_\xi\setminus G^1_i\cup G^2_i)<\varepsilon$ for every $i\in M$,
		\item[(iii)] $\{t\in\T_\xi:\tt\in \cup_{i\in M}G^1_i\}$ is essentially incomparable and
		\item[(iv)] for every $i_1\neq i_2$ in $M$, every $\tt_1\in G^2_{i_1}$ and $\tt_2\in G^2_{i_2}$, the nodes $m_{j_{t_1}}$ and $m_{j_{t_2}}$ are incomparable in $\W_\xi$.
	\end{itemize}
\end{lem}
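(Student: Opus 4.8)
The strategy is to treat this as a two-stage refinement: first extract the ``essentially incomparable'' part $G^1_i$ lying in a region close to the weak$^*$-limit (mimicking the role of $V_\tt$ in Lemma \ref{lemma incomparable}), then peel off from each residual a second piece $G^2_i$ that is incomparable at the level of the weight tree $\W_\xi$ via the original Proposition \ref{proposition incomparable} applied to the pushforward measures on $\tW_\xi$. As in Lemma \ref{lemma incomparable}, begin by passing to a subsequence so that the (disjoint, finite) sets $\bigcup\{\supp g_t : \tt\in\supp\mu_i\}$ are successive blocks of $\N$, and so that $\mu_i \xrightarrow{w^*} \mu = \sum_{\tt}a_\tt\delta_\tt$ on $\tT_\xi$; also pass to a subsequence with $|\mu_i(\tT_\xi)-\mu(\tT_\xi)|<\varepsilon/4$.

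First I would handle the essentially-incomparable piece. Enumerate $\supp\mu=\{\tt^{(1)},\tt^{(2)},\dots\}$ and fix $n_0$ with $\sum_{l=1}^{n_0}a_{\tt^{(l)}}\ge \mu(\tT_\xi)-\delta$ for $\delta$ small (chosen at the end). For a fixed node $\tt=((g_1,m_{j_1}),\dots,(g_k,m_{j_k}))\in\supp\mu$, the key observation is that any $\tt'$ with $\tt\le_{\tT_\xi}\tt'$ and $\tt'\ne\tt$ extends $\tt$ by appending a pair $(g_{k+1},m_{j_{k+1}})$ with $\supp g_{k+1}>\supp g_k$, hence $\min\supp g_{k+1}\to\infty$ as we range over a decreasing neighborhood basis of $\tt$; therefore the clopen sets $U^{\tt}_N=\{\tt'\in V_\tt : \tt'=\tt \text{ or }\min\supp g_{\tt'|_{|\tt|+1}}\ge N\}$ decrease to $\{\tt\}$, and a diagonal argument produces, for each $i$ in some infinite $L$, a set $\widehat G^{\tt}_i=\supp\mu_i\cap U^{\tt}_{N_i}$ with $N_i\to\infty$, $\mu_i(\widehat G^\tt_i)\to a_\tt$, and—since the supports $\supp g_t$ for $\tt\in\supp\mu_i$ all lie in a fixed block bounded below by a number we can force to exceed $N_{i'}$ for all earlier $i'$—the union $\bigcup_{i\in L}\widehat G^\tt_i$ is essentially incomparable. (Here ``essentially incomparable'' is exactly the statement that the appended coordinate of the longer node sits strictly to the left of the support $\supp g_{t_1}$ of the shorter node's defining pair, which is what the successive-blocks arrangement plus $N_i\to\infty$ gives us.) Running this successively for $\tt^{(1)},\dots,\tt^{(n_0)}$, with the finite-induction bookkeeping of the proof of Proposition \ref{proposition incomparable} to also kill comparabilities \emph{between} the $\tt^{(l)}$'s (using that, for comparable limit nodes, at most one $\mu_i$ charges the extension), yields $G^1_i=\bigcup_{l=1}^{n_0}\widehat G^{\tt^{(l)}}_i$ with $\{t:\tt\in\bigcup_i G^1_i\}$ essentially incomparable and $\mu_i(G^1_i)\ge \mu(\tT_\xi)-2\delta$.

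Now for the residual: set $R_i=\supp\mu_i\setminus G^1_i$ and consider the pushforward measures $\nu_i=(\pi)_*(\mu_i|_{R_i})$ on $\tW_\xi$, where $\pi(\tt)=m_{j_t}$ is the weight projection; these are positive, finitely supported, uniformly bounded by $C$, and—because each $\supp g_t$, $\tt\in\supp\mu_i$, is contained in the $i$-th block—the supports $\supp\nu_i$ need not be disjoint \emph{a priori}, but their preimages' $g$-supports are, which is all we need since Proposition \ref{proposition incomparable} as stated wants disjoint supports on the tree; so I would instead apply Proposition \ref{proposition incomparable} to a relabeled disjointified version (or, more honestly, observe the proof of Proposition \ref{proposition incomparable} only used disjointness to make the final measures successive, so re-run that argument verbatim for $(\nu_i)$, which is legitimate because distinct blocks of $\N$ give us successive control). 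This produces an infinite $M\subseteq L$ and incomparable subsets $H_i\subseteq\supp\nu_i$ in $\W_\xi$ with $\nu_i(\tW_\xi\setminus H_i)<\varepsilon/4$; pulling back, $G^2_i=\{\tt\in R_i : m_{j_t}\in H_i\}$ satisfies (i) disjointness from $G^1_i$, (iv) pairwise $\W_\xi$-incomparability across different $i$'s, and $\mu_i(R_i\setminus G^2_i)=\nu_i(\tW_\xi\setminus H_i)<\varepsilon/4$. Combining, $\mu_i(\tT_\xi\setminus(G^1_i\cup G^2_i))\le 2\delta+\varepsilon/4+\varepsilon/4<\varepsilon$ once $\delta<\varepsilon/4$, giving (ii).

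The main obstacle I anticipate is \textbf{step two's disjointness bookkeeping}: unlike in Proposition \ref{proposition incomparable}, where disjoint supports on $\tT$ are given, after pushing forward to $\tW_\xi$ the weight-supports of different $\mu_i$ can overlap (indeed nest, by Remark following Definition \ref{definition full tree support+weight}(ii)), so one must be careful that the successive-blocks arrangement of the $g$-supports still delivers enough separation to re-run the Proposition \ref{proposition incomparable} induction—the point being that when we pick $i_{k+1}$ large, the relevant $g$-supports lie so far to the right that no earlier-chosen node in $\W_\xi$ can have a successor charged by $\mu_{i_{k+1}}|_{R_{i_{k+1}}}$, for the same reason as in the base lemma. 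A secondary subtlety is verifying that removing $G^1_i$ does not destroy this: since $G^1_i\subseteq\supp\mu_i$ and we only discard mass, $R_i$'s $g$-supports are still inside the $i$-th block, so the block structure—and hence the whole argument—is preserved. Everything else is a routine reprise of the techniques already developed for Lemma \ref{lemma incomparable} and Proposition \ref{proposition incomparable}.
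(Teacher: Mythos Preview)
There is a genuine gap in your first stage: the sets $\widehat G^{\tt}_i$ you build from the neighborhoods $U^{\tt}_N$ are \emph{not} essentially incomparable in general. The definition of essential incomparability requires that whenever $m_{j_{s_1}}<_{\W_\xi}m_{j_{s_2}}$ and $(h,m_{j_{s_1}})\le_{\T_\xi}s_2$, one has $\supp h<\supp g_{s_1}$; here $h$ is the $g$-coordinate of $\ts_2$ at the (unique) position of weight $m_{j_{s_1}}$, which is an \emph{intermediate} coordinate of $\ts_2$, while $g_{s_1}$ is the \emph{last} coordinate of $\ts_1$. Your control is on the last coordinates only (these lie in the $i$-th block) and on the \emph{first} appended coordinate after $\tt$ (via $N_i$); intermediate coordinates deeper than $|\tt|+1$ are unconstrained. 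Concretely, fix $\tt$ and let $\mu_i=\tfrac12(\delta_{\ts_1^{(i)}}+\delta_{\ts_2^{(i)}})$ where $\ts_1^{(i)}=\tt\frown(g_1^{(i)},\sigma(\tt))$ is an immediate successor of $\tt$ and $\ts_2^{(i)}=\tt\frown(h_1^{(i)},\sigma(\tt))\frown(g_2^{(i)},\cdot)$ lies two levels below $\tt$, with all the $g$-supports inside block $i$. Then $\mu_i\xrightarrow{w^*}\delta_{\tt}$ and your $\widehat G^{\tt}_i$ will contain both nodes. For $i<i'$ one has $m_{j_{s_1^{(i)}}}=\sigma(\tt)<_{\W_\xi}m_{j_{s_2^{(i')}}}$ and the relevant intermediate $h=h_1^{(i')}$ has $\supp h$ in block $i'$, i.e., to the \emph{right} of $\supp g_{s_1^{(i)}}$ (block $i$), violating essential incomparability. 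The same obstruction already appears within a single $\mu_i$.

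This is precisely why the paper does \emph{not} mimic Lemma~\ref{lemma incomparable} directly but instead introduces the successor-determined limit $\nu=\mathrm{succ}\text{-}\lim_i\mu_i$ and the splitting Lemma~\ref{splitting lemma}: one decomposes $\supp\mu_i=A_i\cup B_i$ so that on $A_i$ the $w^*$-limit and the successor-limit coincide, and on $B_i$ the successor-limit vanishes. The set $G^1_i$ is then taken inside $A_i\cap\bigl(\cup_{\ts\in F}\mathrm{succ}(\ts)\bigr)$ for a fixed finite $F$; being \emph{immediate} successors of $F$ forces every intermediate $g$-coordinate to lie in $\cup_{\ts\in F}\supp g_s$, which is bounded, while the last coordinate $g_{s_1}$ can be pushed far to the right by choosing $i$ large---this is what actually yields essential incomparability. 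Note that restricting to immediate successors costs mass: $\mu_i(G^1_i)$ only approaches $\nu(\tT_\xi)$, which may be strictly less than $\mu(\tT_\xi)$, so the residual is \emph{not} small and the $G^2_i$ piece is genuinely needed. Your second stage also inherits a problem: the vanishing of the successor-limit on $B_i$ is exactly what forces the weight-supports $\{m_{j_t}:\tt\in B_i\}$ to be disjoint across $i$ after a further subsequence, making Proposition~\ref{proposition incomparable} applicable on $\tW_\xi$; without that, your pushforwards $\nu_i$ need not have disjoint supports, and the block structure of the $g$-supports in $\N$ says nothing about successiveness of the weight-supports in $\tW_\xi$, so the induction of Lemma~\ref{lemma incomparable} cannot simply be ``re-run''.
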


Before we are able to prove this Lemma it is necessary to introduce the notion of successor limits of measures. We find this limit notion to be of independent interest and therefore we use broader terminology to define it and prove its properties.

\begin{notn}
Let $\mathcal{T}$ be a countably branching well founded tree. For each $t\in\mathcal{T}$ we denote $\mathrm{succ}_{\mathcal{T}}(t)$ the set of immediate successors of $t$. In particular, if $t$ is maximal then $\mathrm{succ}_{\mathcal{T}}(t)$ is empty. For $t\in\mathcal{T}$ we denote $V_t = \{s\in\mathcal{T}:t\leq s\}$. We view $\mathcal{T}$ as topological space with the topology generated by the sets $V_t$ and $\mathcal{T}\setminus V_t$, $t\in\mathcal{T}$. This is a compact metric topology for which the sets of the form $V_t\setminus(\cup_{s\in F}V_s)$, $t\in\mathcal{T}$ and $F\subset\mathrm{succ}_\mathcal{T}(t)$ finite, form a base of clopen sets. We denote by $\mathcal{M}_+(\mathcal{T})$ the cone of all bounded positive measures $\mu:\mathcal{P}(\mathcal{T})\to[0,+\infty)$. For $\mu\in\mathcal{M}_+(\mathcal{T})$ we define the support of $\mu$ to be the set $\mathrm{supp}(\mu) = \{t\in\mathcal{T}:\mu(\{t\})>0\}$.  A set $A$ in $\mathcal{M}_+(\mathcal{T})$ is called bounded if $\sup_{\mu\in\mathcal{A}}\mu(\mathcal{T})<\infty$.
\end{notn}

Recall that a sequence $(\mu_i)$ in $\mathcal{M}_+(\mathcal{T})$ converges in the $w^*$-topology to a $\mu\in\mathcal{M}_+(\mathcal{T})$ if and only if for all clopen sets $V\subset \mathcal{T}$ we have $\lim_i\mu_i(V) = \mu(V)$ if and only if for all $t\in\mathcal{T}$ we have $\lim_i\mu_i(V_t) = \mu(V_t)$.

\begin{dfn}
Let $\mathcal{T}$ be a countably branching well founded tree, $(\mu_i)$ be a disjointly supported sequence in $\mathcal{M}_+(\mathcal{T})$ and $\nu\in\mathcal{M}_+(\mathcal{T})$. We say that $\nu$  is the successor-determined limit of $(\mu_i)$ if for all $t\in\mathcal{T}$ we have $\mu(\{t\}) = \lim_i\mu_i(\mathrm{succ}_\mathcal{T}(t))$. In this case we write $\nu = \mathrm{succ}\-\!\lim_i\mu_i$.
\end{dfn}


\begin{rem}
It is possible for a disjointly supported and bounded sequence $(\mu_i)\in\mathcal{M}_+(\mathcal{T})$ to satisfy $w^*\-\lim_i\mu_i\neq \mathrm{succ}\-\!\lim_i\mu_i$. Take for example $\mathcal{T} = [\mathbb{N}]^{\leq 2}$ (all subsets of $\mathbb{N}$ with at most two elements with the partial order of initial segments). Define $\mu_i = \delta_{\{i,i\}}$. Then, $w^*\-\lim_i\mu_i = \delta_\emptyset$ whereas $\mathrm{succ}\-\!\lim_i\mu_i = 0$.
\end{rem}

Although these limits are not the necessarily the same, there is an explicit formula relating $\mathrm{succ}\-\!\lim_i\mu_i$ to $w^*\-\lim_i\mu_i$.

\begin{lem}
\label{formula w-succ}
Let $\mathcal{T}$ be a countable well founded tree, $(\mu_i)$ be a bounded and disjointly supported sequence in $\mathcal{M}_+(\mathcal{T})$ so that $w^*\-\lim_i\mu_i = \mu$ exists and for all $t\in\mathcal{T}$  the limit $\nu(\{t\}) = \lim_i\mu_i(\mathrm{succ}_\mathcal{T}(t)) $  exists as well. Then, for every $t\in\mathcal{T}$ and enumeration $(t_j)$ of $\mathrm{succ}_\mathcal{T}(t)$ we have
\begin{equation}
\label{formula w-succ formula}
\mu(\{t\}) = \nu(\{t\}) + \lim_j\lim_i\mu_i\Big(\cup_{k\geq j}(V_{t_k}\setminus\{t_{k}\})\Big).
\end{equation}
In particular, $\mu(\{t\}) = \nu(\{t\})$ if and only if the double limit in \eqref{formula w-succ formula} is zero.
\end{lem}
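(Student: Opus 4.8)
The plan is to decompose $V_t$ into disjoint pieces, track the mass each $\mu_i$ places on them, and then pass to the limit first in $i$ and then in $j$. Fix $t\in\mathcal{T}$ and an enumeration $(t_j)$ of $\mathrm{succ}_\mathcal{T}(t)$, and for $j\in\N$ set $A_j = \cup_{k\geq j}(V_{t_k}\setminus\{t_k\})$. Since $V_t$ is the disjoint union of $\{t\}$, the singletons $\{t_k\}$ ($k\in\N$), and the sets $V_{t_k}\setminus\{t_k\}$ ($k\in\N$), countable additivity of $\mu_i$ gives, for all $i$ and $j$,
\[\mu_i(V_t) = \mu_i(\{t\}) + \mu_i(\mathrm{succ}_\mathcal{T}(t)) + \sum_{k<j}\mu_i(V_{t_k}\setminus\{t_k\}) + \mu_i(A_j),\]
because $\mu_i(\mathrm{succ}_\mathcal{T}(t)) = \sum_k\mu_i(\{t_k\})$ and $\mu_i(\cup_{k\geq 1}(V_{t_k}\setminus\{t_k\})) = \sum_{k<j}\mu_i(V_{t_k}\setminus\{t_k\}) + \mu_i(A_j)$. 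This identity is what I would feed into the limiting process.

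Next I would read off the consequences of the two hypotheses. Disjointness of supports means that each node $s$ lies in $\mathrm{supp}(\mu_i)$ for at most one index $i$, so the sequence $(\mu_i(\{s\}))_i$ has at most one nonzero term and hence $\lim_i\mu_i(\{s\}) = 0$; in particular $\mu_i(\{t\})\to 0$ and $\mu_i(\{t_k\})\to 0$ for every $k$. The sets $V_t$ and $V_{t_k}$ are clopen, so the characterization of $w^*$-convergence recalled before the lemma gives $\mu_i(V_t)\to\mu(V_t)$ and $\mu_i(V_{t_k})\to\mu(V_{t_k})$, whence $\mu_i(V_{t_k}\setminus\{t_k\}) = \mu_i(V_{t_k}) - \mu_i(\{t_k\})\to\mu(V_{t_k})$; also $\mu_i(\mathrm{succ}_\mathcal{T}(t))\to\nu(\{t\})$ by hypothesis. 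Substituting these into the identity (the sum over $k<j$ is finite, so the limit passes through it) shows that for each fixed $j$ the limit $\lim_i\mu_i(A_j)$ exists and equals $\mu(V_t) - \nu(\{t\}) - \sum_{k<j}\mu(V_{t_k})$.

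It remains to let $j\to\infty$. The $V_{t_k}$ are pairwise disjoint subsets of $V_t$ and $\mu$ is a finite positive measure, so $\sum_k\mu(V_{t_k}) = \mu(\cup_k V_{t_k}) = \mu(V_t) - \mu(\{t\})$ by countable additivity and the partition of $V_t$; therefore
\[\lim_j\lim_i\mu_i(A_j) = \mu(V_t) - \nu(\{t\}) - \big(\mu(V_t) - \mu(\{t\})\big) = \mu(\{t\}) - \nu(\{t\}),\]
which is \eqref{formula w-succ formula} after rearranging. The final clause is then immediate, since $\mu_i(A_j)\geq 0$ forces the double limit to be a nonnegative number, vanishing precisely when $\mu(\{t\}) = \nu(\{t\})$.

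The computation itself is just bookkeeping; the single load-bearing point, and the one I expect to require the most care, is the vanishing $\lim_i\mu_i(\{s\}) = 0$ for every node $s$, which is exactly where disjointness of supports enters and without which the formula fails, as the preceding remark's example $\mu_i = \delta_{\{i,i\}}$ on $[\N]^{\leq 2}$ illustrates. A secondary subtlety worth flagging is that $V_{t_k}\setminus\{t_k\}$ and $A_j$ need not be clopen when $t_k$ has infinitely many immediate successors, so $w^*$-convergence cannot be applied to them directly; the argument above is arranged so that $w^*$-limits are only ever taken of the genuinely clopen sets $V_t$ and $V_{t_k}$, with everything else extracted from the additivity identity.
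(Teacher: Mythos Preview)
Your proof is correct and rests on the same two ingredients as the paper's: $w^*$-convergence applied only to genuinely clopen sets, and disjointness of supports to kill singleton contributions. The organization differs. The paper starts from $\mu(\{t\}) = \lim_j\mu\big(V_t\setminus\cup_{k<j}V_{t_k}\big)$, applies $w^*$-convergence to that clopen set, and then splits off the layer $\{t_k:k\geq j\}$ using the observation (from disjointness) that $\lim_i\mu_i(\{t_k:k\geq j\}) = \nu(\{t\})$ for every $j$. You instead write down the full additive decomposition $\mu_i(V_t) = \mu_i(\{t\}) + \mu_i(\mathrm{succ}_\mathcal{T}(t)) + \sum_{k<j}\mu_i(V_{t_k}\setminus\{t_k\}) + \mu_i(A_j)$ and pass each piece to its limit separately, recovering $\lim_i\mu_i(A_j)$ by subtraction. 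Your route makes the bookkeeping of each term more transparent and isolates exactly which limits are being taken; the paper's is slightly more compact because it never needs the full partition or the sum $\sum_k\mu(V_{t_k})$.

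One small expository slip: your closing reference to the remark's example $\mu_i = \delta_{\{i,i\}}$ is misplaced. That sequence \emph{is} disjointly supported and the formula \eqref{formula w-succ formula} \emph{does} hold for it (with $\mu(\{\emptyset\}) = 1$, $\nu(\{\emptyset\}) = 0$, and the double limit equal to $1$); the example illustrates that $\mu\neq\nu$ can occur, not that the formula fails without disjointness.
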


\begin{proof}
For $j\in\mathbb{N}$ we have $\{t\}\cup(\cup_{k\geq j}V_{t_k}) = V_t\setminus(\cup_{k<j}V_{t_k})$ which is clopen and thus
\begin{equation}
\label{thankfully its clopen}
\lim_i\mu_i\big(\{t\}\cup(\cup_{k\geq j}V_{t_k})\big) = \mu\big(\{t\}\cup(\cup_{k\geq j}V_{t_k})\big).
\end{equation}
Because $(\mu_i)$ is disjointly supported we observe that  for all $j\in\N$
\begin{equation}
\label{formula w-succ eq1}
\lim_i\mu_i(\{t_{k}:k\geq j\}) = \lim_i\mu_i(\mathrm{succ}_\mathcal{T}(t)) = \nu(\{t\}).
\end{equation}
We calculate
\begin{equation*}
\begin{split}
\mu(\{t\}) &= \lim_{j\to\infty}\mu\Big(\{t\}\cup(\cup_{k\geq j}{V}_{t_j})\Big) \stackrel{\eqref{thankfully its clopen}}{=} \lim_j\lim_i\mu_i\Big(\{t\}\cup(\cup_{k\geq j}V_{t_j})\Big)\\
&= \lim_j\lim_i\mu_i(\cup_{k\geq j}V_{t_j}) = \lim_j\lim_i\mu_i(\{t_{k}:k\geq j\}\cup(\cup_{k\geq j}(V_{t_k}\setminus\{t_{k}\})))\\
&= \lim_j\lim_i\mu_i(\{t_{k}:k\geq j\}) + \lim_j\lim_i\mu_i\Big(\cup_{k\geq j}(V_{t_k}\setminus\{t_{k}\})\Big).
\end{split}
\end{equation*}
Thus, \eqref{formula w-succ eq1} yields the conclusion.
\end{proof}

\begin{cor}
Let $\mathcal{T}$ be a countable well founded tree and $(\mu_i)$ be a bounded and disjointly supported sequence in $\mathcal{M}_+(\mathcal{T})$. Then, there exist a subsequence $(\mu_{i_n})$ of $(\mu_i)$ and $\nu\in\mathcal{M}_+(\mathcal{T})$ with $\nu = \mathrm{succ}\-\!\lim_n\mu_{i_n}$.
\end{cor}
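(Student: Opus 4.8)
The plan is a diagonal extraction followed by a short argument bounding the total mass of the candidate limit. Write $C=\sup_i\mu_i(\mathcal{T})<\infty$, which is finite since $(\mu_i)$ is bounded, and fix an enumeration $\mathcal{T}=\{t_k:k\in\mathbb{N}\}$ (possible as $\mathcal{T}$ is countable). For each $k$ the scalar sequence $(\mu_i(\mathrm{succ}_\mathcal{T}(t_k)))_i$ takes values in $[0,C]$, so a standard diagonal argument yields a single subsequence $(\mu_{i_n})_n$ of $(\mu_i)$ along which $\lim_n\mu_{i_n}(\mathrm{succ}_\mathcal{T}(t_k))$ exists for every $k$. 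Set $\nu(\{t\})=\lim_n\mu_{i_n}(\mathrm{succ}_\mathcal{T}(t))$ for $t\in\mathcal{T}$ and extend $\nu$ to $\mathcal{P}(\mathcal{T})$ by $\nu(A)=\sum_{t\in A}\nu(\{t\})$; this is well-defined as a $[0,+\infty]$-valued set function on the countable set $\mathcal{T}$.

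It remains to check that $\nu\in\mathcal{M}_+(\mathcal{T})$, i.e.\ that $\nu$ is a \emph{bounded} positive measure, and this is where the only real point lies. Positivity is clear. For boundedness, observe that the sets $\mathrm{succ}_\mathcal{T}(t)$, $t\in\mathcal{T}$, are pairwise disjoint: if $s$ were an immediate successor of two distinct nodes $t,t'$, then $t,t'$ both lie in $\{u\in\mathcal{T}:u<s\}$, which is totally ordered, so without loss of generality $t<t'<s$, contradicting that nothing lies strictly between $t$ and its immediate successor $s$. Hence for every finite $F\subset\mathcal{T}$ and every $i$,
\[
\sum_{t\in F}\mu_i(\mathrm{succ}_\mathcal{T}(t))=\mu_i\Big(\bigcup_{t\in F}\mathrm{succ}_\mathcal{T}(t)\Big)\le\mu_i(\mathcal{T})\le C,
\]
and letting $i=i_n\to\infty$ gives $\sum_{t\in F}\nu(\{t\})\le C$. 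Taking the supremum over all finite $F\subset\mathcal{T}$ yields $\nu(\mathcal{T})=\sum_{t\in\mathcal{T}}\nu(\{t\})\le C<\infty$, so $\nu$ is indeed a bounded positive measure.

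Finally, $(\mu_{i_n})_n$ is still disjointly supported, being a subsequence of $(\mu_i)$, and by construction $\nu(\{t\})=\lim_n\mu_{i_n}(\mathrm{succ}_\mathcal{T}(t))$ for every $t\in\mathcal{T}$; this is precisely the definition of $\nu=\mathrm{succ}\-\!\lim_n\mu_{i_n}$, which completes the argument. No step here is genuinely hard; the only thing requiring a moment's thought is the disjointness of the successor sets, which is exactly what upgrades the trivial pointwise boundedness of $(\mu_i(\mathrm{succ}_\mathcal{T}(t_k)))_i$ into a uniform bound on the total mass of $\nu$.
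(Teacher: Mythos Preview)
Your proof is correct, and it takes a genuinely different route from the paper's.

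The paper first passes to a subsequence along which both the $w^*$-limit $\mu = w^*\text{-}\lim_i\mu_i$ and all the pointwise limits $\nu(\{t\}) = \lim_i\mu_i(\mathrm{succ}_\mathcal{T}(t))$ exist, and then invokes the formula of the preceding lemma (Lemma~\ref{formula w-succ}) to deduce $\nu(\{t\})\le \mu(\{t\})$ for every $t$; summing over $t$ gives $\nu(\mathcal{T})\le\mu(\mathcal{T})<\infty$. Your argument bypasses both the $w^*$-limit and that lemma entirely: you observe directly that the sets $\mathrm{succ}_\mathcal{T}(t)$, $t\in\mathcal{T}$, are pairwise disjoint, so $\sum_{t\in F}\mu_i(\mathrm{succ}_\mathcal{T}(t))\le\mu_i(\mathcal{T})\le C$ for every finite $F$, and the bound on $\nu(\mathcal{T})$ follows by taking limits and suprema. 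This is more elementary and self-contained; in particular it does not use compactness of $\mathcal{T}$ (and hence not well-foundedness either), nor the relation between the two limit notions. What the paper's approach buys is the sharper comparison $\nu\le\mu$ pointwise, which is not needed here but is conceptually informative and ties the corollary to the surrounding machinery.
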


\begin{proof}
By passing to a subsequence, $\mu = w^*\-\lim_i\mu_i$ exists and for all $t\in\mathcal{T}$  the limit $\nu(\{t\}) = \lim_i\mu_i(\mathrm{succ}_\mathcal{T}(t)) $  exists as well. By \eqref{formula w-succ formula} for all $t\in\mathcal{T}$ we have $\nu(\{t\})\leq \mu(\{t\})$. Thus $\sum_{t\in\mathcal{T}}\nu(\{t\}) \leq \mu(\mathcal{T})$, i.e., $\nu$ defines a bounded positive measure
\end{proof}

\begin{lem}
\label{splitting lemma}
Let $\mathcal{T}$ be a countable well founded tree and $(\mu_i)$ be a bounded and disjointly supported sequence in $\mathcal{M}_+(\mathcal{T})$ so that $\mathrm{succ}\-\!\lim_i\mu_i = \nu$  exists. Then, there exist an infinite $L\subset\mathbb{N}$ and partitions $A_i$, $B_i$ of $\mathrm{supp}(\mu_i)$, $i\in L$, so that the following are satisfied.
\begin{itemize}

\item[(i)] If for all $i\in L$ we define the measure $\mu_i^1$ given by  $\mu_i^1(C)= \mu_i(C\cap A_i)$, then $\nu = w^*\-\lim_{i\in L}\mu_i^1 = \mathrm{succ}\-\!\lim_{i\in L}\mu_i^1$.

\item[(ii)] If for all $i\in L$ we define the measure $\mu_i^2$ given by  $\mu_i^2(C)= \mu_i(C\cap B_i)$ then for all $t\in\mathcal{T}$ the sequence $(\mu_i^2(\mathrm{succ}_\mathcal{T}(t)))_i$ is eventually zero. In particular, $\mathrm{succ}\-\!\lim_{i\in L}\mu_i^2 = 0$.

\end{itemize}
\end{lem}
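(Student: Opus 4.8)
The plan is to use Lemma~\ref{formula w-succ} as the engine. The key point is that for each $t\in\mathcal{T}$ the ``defect'' between $\mu(\{t\})$ and $\nu(\{t\})$ is exactly the double limit $\lim_j\lim_i\mu_i(\cup_{k\geq j}(V_{t_k}\setminus\{t_k\}))$, and this defect mass ``lives strictly below the immediate successors'' of $t$. The measures $\mu_i^1$ should collect, for each $t$, the portion of $\mathrm{supp}(\mu_i)$ that accounts for $\nu(\{t\})$ (roughly, the atoms sitting on or just below the immediate successors of the points where $\nu$ charges), while $\mu_i^2$ collects the leftover mass, which by construction escapes deeper into the tree and hence, in the limit, is no longer seen by any fixed $\mathrm{succ}_\mathcal{T}(t)$.

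First I would pass to a subsequence so that $\mu = w^*\-\lim_i\mu_i$ exists; note $\mathrm{supp}(\nu)\subseteq\mathrm{supp}(\mu)$ and $\mathrm{supp}(\mu)$ is a countable (possibly infinite) set, enumerated as $(t^{(1)},t^{(2)},\dots)$. For a fixed $t = t^{(m)}$ with immediate successors $(t_j)_j$: by \eqref{formula w-succ eq1} we have $\lim_i\mu_i(\{t_k:k\geq j\}) = \nu(\{t\})$ for every $j$, and by \eqref{thankfully its clopen} together with the fact that the double limit is finite, we can choose, via a diagonal argument over $m$, an infinite $L$ and for each $i\in L$ an index $j(i,t)$ such that (a) $\mu_i(\cup_{k\geq j(i,t)}(V_{t_k}\setminus\{t_k\}))$ is within $2^{-m-i}$ of $\lim_i$ of itself / tends to $0$ in the appropriate sense, and (b) the atoms of $\mu_i$ on $\{t_k : k < j(i,t)\}$ contribute negligibly. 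Then I would declare the part of $\mathrm{supp}(\mu_i)$ assigned to $t$ in $A_i$ to be $\mathrm{supp}(\mu_i)\cap\{t_k:k\geq j(i,t)\}$ — that is, only atoms sitting \emph{exactly on} immediate successors of $t$, far enough out — and let $B_i$ be everything else; one must check the $A_i$-pieces over different $t\in\mathrm{supp}(\mu)$ are disjoint (they are, since distinct points have distinct immediate-successor sets, and an atom can be an immediate successor of at most one node), and that they cover enough of $\mathrm{supp}(\mu_i)$ that the leftover $B_i$ has the required escaping property.

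For part~(i): with $\mu_i^1$ supported on $A_i$, for each fixed $t$ the set $\mathrm{succ}_\mathcal{T}(t)\cap A_i$ is, for large $i\in L$, exactly $\{t_k : k\geq j(i,t)\}\cap\mathrm{supp}(\mu_i)$, so $\mu_i^1(\mathrm{succ}_\mathcal{T}(t))\to\nu(\{t\})$, giving $\mathrm{succ}\-\!\lim_{i\in L}\mu_i^1 = \nu$; and since $\mu_i^1$ keeps \emph{all} the mass that $\nu$ demands while discarding only atoms strictly below successors, one checks $\mu_i^1(V_t)\to\nu(V_t) = \mu(V_t)$ by applying Lemma~\ref{formula w-succ} to the sequence $(\mu_i^1)$ and observing its defect double-limit vanishes by construction — hence $w^*\-\lim_{i\in L}\mu_i^1 = \mathrm{succ}\-\!\lim_{i\in L}\mu_i^1 = \nu$. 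For part~(ii): $\mu_i^2$ is supported on $B_i = \mathrm{supp}(\mu_i)\setminus A_i$; a point $s\in B_i$ is an immediate successor of some $t$ only if it is one of the discarded ``early'' successors $\{t_k : k < j(i,t)\}$, and by arranging the $j(i,t)$ to grow with $i$ (diagonalization) one ensures that for each fixed $t$, once $i$ is large enough, $B_i$ contains no immediate successor of $t$ at all; thus $(\mu_i^2(\mathrm{succ}_\mathcal{T}(t)))_i$ is eventually zero for every $t$, whence $\mathrm{succ}\-\!\lim_{i\in L}\mu_i^2 = 0$.

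\emph{The main obstacle} I anticipate is the bookkeeping in the diagonal argument: I need a single infinite $L$ and a single choice of cutoffs $j(i,t)$ that works simultaneously for all (countably many) $t\in\mathrm{supp}(\mu)$, while guaranteeing both that each $j(i,t)\to\infty$ in $i$ (for part (ii)) and that the tails $\cup_{k\geq j(i,t)}(V_{t_k}\setminus\{t_k\})$ are genuinely discarded into $B_i$ without accidentally being needed elsewhere. The subtlety is that an atom of $\mu_i$ lying in $V_{t_k}\setminus\{t_k\}$ (discarded relative to $t$) might itself be an immediate successor of some \emph{other} node $t'$ and thus needed in $A_i$ for $t'$; one resolves this by processing nodes of $\mathrm{supp}(\mu)$ in an order refining the tree order (or simply noting each atom is assigned to $A_i$ according to \emph{its own} parent, so the assignment is unambiguous and the ``discarded relative to $t$'' atoms are precisely reclaimed relative to their true parents), and by using finiteness of $\mathrm{supp}(\mu_i)$ so that only finitely many nodes $t$ are relevant to any given $\mu_i$.
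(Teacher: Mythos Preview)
Your plan has the right flavor---split each $\mathrm{supp}(\mu_i)$ according to the parent of each atom---but there is a genuine gap, and the cutoffs $j(i,t)$ are a complication that causes it rather than resolves it.

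\textbf{The gap.} You enumerate only $\mathrm{supp}(\mu)$ and build $A_i$ from immediate successors of those $t^{(m)}$. For any $t\notin\mathrm{supp}(\mu)$ (there are in general many such $t$ with $\mu_i(\mathrm{succ}_\mathcal{T}(t))>0$ for infinitely many $i$), every atom of $\mu_i$ in $\mathrm{succ}_\mathcal{T}(t)$ lands in $B_i$, so $\mu_i^2(\mathrm{succ}_\mathcal{T}(t))=\mu_i(\mathrm{succ}_\mathcal{T}(t))$. This tends to $\nu(\{t\})=0$, but the statement requires \emph{eventually zero}, which you do not get. Your part~(ii) argument (``once $i$ is large $B_i$ contains no immediate successor of $t$'') is only claimed for $t\in\mathrm{supp}(\mu)$, and even there the direction is off: with $A_i\cap\mathrm{succ}_\mathcal{T}(t)=\{t_k:k\geq j(i,t)\}$ and $j(i,t)\to\infty$, it is $B_i$ that absorbs the growing initial block $\{t_k:k<j(i,t)\}$, not the other way round. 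Disjointness of supports does not rescue this in general.

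\textbf{How the paper avoids all of this.} The paper enumerates \emph{all} of $\mathcal{T}=\{s_n:n\in\mathbb{N}\}$ (not just $\mathrm{supp}(\mu)$), passes to a subsequence so that $|\mu_i(\mathrm{succ}_\mathcal{T}(s_n))-\nu(\{s_n\})|<2^{-n}$ for $i>n$, and then simply sets
\[
A_i=\mathrm{supp}(\mu_i)\cap\bigcup_{n\leq i}\mathrm{succ}_\mathcal{T}(s_n),\qquad B_i=\mathrm{supp}(\mu_i)\cap\bigcup_{n>i}\mathrm{succ}_\mathcal{T}(s_n).
\]
There are no per-node cutoffs: the partition is ``is the parent of this atom among the first $i$ enumerated nodes?''. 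Since every non-root node has a \emph{unique} parent, this is an honest partition with no conflicts---the very obstacle you anticipate simply does not arise. Part~(ii) is then immediate for every $t=s_n$: once $i\geq n$, $B_i\cap\mathrm{succ}_\mathcal{T}(s_n)=\emptyset$. For part~(i) the paper computes, for fixed $t$ with successors $(t_k)$ and $L_j=\{n:t_k\leq s_n\text{ for some }k\geq j\}$,
\[
\mu_i^1\Big(\bigcup_{k\geq j}(V_{t_k}\setminus\{t_k\})\Big)=\sum_{n\in L_j\cap[1,i]}\mu_i(\mathrm{succ}_\mathcal{T}(s_n))\leq \nu\Big(\bigcup_{k\geq j}V_{t_k}\Big)+2^{-\min L_j+1},
\]
which tends to $0$ as $j\to\infty$ uniformly in $i$; then Lemma~\ref{formula w-succ} gives $w^*\text{-}\lim\mu_i^1=\mathrm{succ}\text{-}\lim\mu_i^1=\nu$. (Incidentally, your line ``$\mu_i^1(V_t)\to\nu(V_t)=\mu(V_t)$'' is a slip: $\nu(V_t)\leq\mu(V_t)$ with strict inequality possible.)

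In short: drop the cutoffs, enumerate all of $\mathcal{T}$, and let the enumeration index do the work.
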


\begin{proof}
Enumerate $\mathcal{T} = \{s_n:n\in\mathbb{N}\}$ and assume, passing if necessary to a subsequence, that for all $n\in\mathbb{N}$ and $i>  n$ we have
\begin{equation}
\label{sufficiently close}
|\mu_{i}(\mathrm{succ}_\mathcal{T}(s_n)) - \nu(s_n)| < \frac{1}{2^n}.
\end{equation}
Let us point out that for $m\neq n$ the sets $\mathrm{succ}_\mathcal{T}(s_m)$ and $\mathrm{succ}_\mathcal{T}(s_n)$ are disjoint and $\cup_n\mathrm{succ}_\mathcal{T}(s_n) = \mathcal{T}\setminus\{t_0\}$, where $t_0$ denotes the root of the tree $\mathcal{T}$.  We may, and will, assume that for all $i\in\mathbb{N}$, $t_0\not\in\mathrm{supp}(\mu_i)$. Define for each $i\in \mathbb{N}$ the sets
\begin{equation*}
A_i = \mathrm{supp}(\mu_i)\cap\Big(\cup_{n=1}^i\mathrm{succ}_\mathcal{T}(s_n)\Big)\text{ and } B_i = \mathrm{supp}(\mu_i)\cap\Big(\cup_{n=i+1}^\infty\mathrm{succ}_\mathcal{T}(s_n)\Big).
\end{equation*}
We point out that for all $i\in\mathbb{N}$, $A_i$, $B_i$ forms a partition of $\mathrm{supp}(\mu_i)$ and we will show that it has the desired properties.

Statement (ii) follows directly from the fact that for every $t\in\mathcal{T}$ the sequence of sets $(B_i\cap\mathrm{succ}_\mathcal{T}(t))_i$ is eventually empty. To show that (i) holds we fix $t\in\mathcal{T}$ and let $(t_j)$ be an enumeration of $\mathrm{succ}_\mathcal{T}(t)$. Define $L_j = \cup_{k=j}^\infty\{n\in\mathbb{N}:t_k\leq s_n\}$, for each $j\in\mathbb{N}$, and observe that $\cap_j L_j = \emptyset$. Also observe that for all $j\in\mathbb{N}$ we have $\cup_{k\geq j}(V_{t_k}\setminus\{t_k\}) = \cup_{n\in L_j}\mathrm{succ}_\mathcal{T}(s_n)$. Therefore we have
\begin{equation*}
\begin{split}
\mu_i&\Big(A_i \cap\big(\cup_{k\geq j}(V_{t_k}\setminus\{t_k\})\big) \Big) = \mu_i\Big(\big(\cup_{n=1}^i\mathrm{succ}_\mathcal{T}(s_n)\big)\cap\big(\cup_{n\in L_j}\mathrm{succ}_\mathcal{T}(s_n)\big)\Big)\\
&= \mu_i\Big(\cup_{n\in L_j\cap[1,i]}\mathrm{succ}_\mathcal{T}(s_n)\Big) = \sum_{n\in L_j\cap[1,i]} \mu_i\big(\mathrm{succ}_\mathcal{T}(s_n)\big)\\
&\stackrel{\eqref{sufficiently close}}{\leq} \sum_{n\in L_j\cap[1,i]}\nu(s_n) + \sum_{n\in L_j\cap[1,i]}\frac{1}{2^n} \leq \nu(\{s_n :n\in L_j\}) + 2^{-\min(L_j)+1}\\
&= \nu(\cup_{k\geq j}V_{t_k}) + 2^{-\min(L_j)+1}.
\end{split}
\end{equation*}
Therefore, $\lim_j\sup_i \mu_i\Big(A_i \cap\big(\cup_{k\geq j}(V_{t_k}\setminus\{t_k\})\big) \Big) = 0$ and by Lemma \ref{formula w-succ}, (i) is satisfied.
\end{proof}

	\begin{proof}[Proof of Lemma \ref{combinatorial lemma essentially incomparable}]
Apply Lemma \ref{splitting lemma} so that, by passing to a subsequence of $(\mu_i)$, there are, for each $i\in\mathbb{N}$, partitions $A_i$, $B_i$ of $\mathrm{supp}(\mu_i)$ so that the conclusion of that Lemma it satisfied. Define, for each $i\in\mathbb{N}$, the measures $\mu_i^1$, $\mu_i^2$ given by $\mu_i^1(C) = \mu_i(A_i\cap C)$  and $\mu_i^2(C) = \mu_i(B_i\cap C)$. Let $\nu = w^*\-\lim_i\mu^1_i = \mathrm{succ}\-\!\lim_i\mu^1_i$. Pick a finite subset $F$ of $\tT_\xi$ so that $\nu(\tT_\xi\setminus F) <\varepsilon/2$. Then, because $\nu = w^*\-\lim_i\mu^1_i$ we have $\lim_i\mu_i^1(\tT_\xi) = \nu(\tT_\xi)$ and because $\nu =  \mathrm{succ}\-\!\lim_i\mu^1_i$
\begin{equation*}
\lim_i\Big|\mu^1_i(\tT_\xi) - \mu^1_i(\cup_{\tilde t\in F}\mathrm{succ}(\tilde t))\Big| = \Big|\nu(\tT_\xi)  - \lim_i\sum_{\tilde t\in F}\mu_i^1(\mathrm{succ}(\tilde t))\Big| = \nu(\tT_\xi\setminus F)<\frac{\varepsilon}{2}.
\end{equation*}
We can find $i_0\in\mathbb{N}$ so that for all $i\geq i_0$ we have
\begin{equation}
\label{successor is close enough}
\Big|\mu_i(A_i) - \mu_i\Big(A_i\cap\big(\cup_{\tilde t\in F}\mathrm{succ}(\tilde t)\big)\Big)\Big| = \Big|\mu^1_i(\tT_\xi) - \mu^1_i(\cup_{\tilde t\in F}\mathrm{succ}(\tilde t))\Big| <\frac \varepsilon 2.
\end{equation}
We may, using the fact that  the sets $\cup\{\supp g_t:\tt\in\supp\mu_i\}$ for $i\in\N$ are disjoint, find $j_0\geq i_0\in\N$ such that
\begin{equation}
\label{avoid all supports}
\cup_{\tilde s\in F}\supp g_{s} < \supp g_t\text{ for every }\tilde t\in\cup_{i\geq j_0}\mathrm{supp}(\mu_i^1).
\end{equation}
We define $G_i^1 = A_i\cap(\cup_{\tt\in F}\mathrm{succ}(\tt))$, $i\geq j_0$. By \eqref{successor is close enough} we have that for all $i\geq j_0$, $|\mu_i(A_i) - \mu_i(G_i^1)|<\varepsilon/2$. Additionally, $\{t\in\T_\xi:\tt\in\cup_{i\geq j_0}G_1^i\}$ is essentially incomparable. Indeed, let $\ts_1,\ts_2\in \cup_{i\geq j_0}G^1_i$ with $m_{j_{s_1}}<_{\W} m_{j_{s_2}}$ and $(h,m_{j_{s_1}})\in\T_\xi$ be such that and $(h,m_{j_{s_1}})\le_{\T_\xi} s_2$. Then \eqref{avoid all supports} implies that $\supp h<\supp g_{s_1}$.

For the remaining part of the proof, since for all $i\in\N$ the set $B_i = \supp\mu_i^2$ is finite (as a subset of the finite support of $\mu_i$) and for each $\tt\in\tT_\xi$ the sequence $(\mu_i^2(\mathrm{succ}(\tt)))_i$ is eventually zero, we may pass to a subsequence so that for all $i<j$ we have $\{m_{j_t}:\tt\in \supp\mu_i^2\}\cap \{m_{j_t}:\tt\in\supp\mu_j^2\} = \emptyset$. We can therefore define the bounded sequence of  disointly supported measures $(\nu_i)$ on $\widetilde{\mathcal{W}}_\xi$ with $\nu_i(\{(w_1,\ldots,w_k)\}) = \mu_i^2(\{\tilde t\in\tT_\xi: m_{j_t} = w_k\})$. Hence, applying Proposition \ref{proposition incomparable} and passing to a subsequence, we obtain a subset $E_i$ of $\supp\nu_i$ such that $\nu_i(\tW_\xi\setminus E_i)<\varepsilon/2$ and the sets $E_i$, $i\in\N$, are pairwise incomparable. It is easy to verify that $G^2_i=\{\tt\in B_i:m_{j_t}\in 2_i \}$, $i\in\N$, are pairwise incomparable and $|\mu_i(B_i) - \mu_i(G_i^2)| = \mu^2_i(\tT_\xi\setminus G^2_i)<\varepsilon/2$ for every $i\in\N$.
\end{proof}

We now define the space $T^\xi_{ess\-inc}$ in a similar way to $T^\xi_{inc}$, that is, using the notion of the Tsirelson extension $W_G$ of a ground set $G$.

\begin{dfn}
	Define the following norming sets on $c_{00}(\N)$.
	\[
	G_0=\big\{\pm e^*_n:n\in\N \big\}
	\]
	\[
	G_1=\Big\{\frac{1}{m_j}\sum_{n\in \N}g(n)e^*_n:j\in\N\;\text{ and }g:\N\to\{-1,0,1\}\text{ with }\supp g\in \S_{n_j}			 \Big\}.
	\]
	For each $f=m_j^{-1}\sum_{n\in \N}g(n)e^*_n$ in $G_1$, set $t_f=(g,m_j)$. Moreover, if $G=G_1\cup G_0$ and $f$ is in $W_G$ with a tree analysis $(f_\alpha)_{\alpha\in \mathcal{A}}$, define
	\[
	\M^1_f=\{\alpha:\alpha\text{ is a maximal node of }\mathcal{A}\text{ and }f_a\in G_1 \}.
	\]
	Let $W$ be the subset of $W_G$ containing all functionals $f$ such that $\{t_{f_\alpha}:\alpha\in\M^1_f \}$ is an essentially incomparable subset of $\T_\xi$. Denote by $T^\xi_{ess\-inc}$ the completion of $c_{00}(\N)$ with respect to the norm $\|\cdot\|_W$ induced by $W$.
\end{dfn}

\begin{rem}
	\begin{itemize}
		\item[(i)] 		The standard basis $(e_j)_j$ of $c_{00}(\N)$ forms a $1$-unconditional basis for the space $T^\xi_{ess\-inc}$ and it is also boundedly complete since $T^\xi_{ess\-inc}$ admits a uniformly unique $\ell_1$ spreading model as shown in Proposition \ref{first space unique}.			
		\item[(ii)]  If $f\in W_G$ with a tree analysis $(f_\alpha)_{\alpha\in \mathcal{A}}$  and $m_{j_{t_{f_\alpha}}}$, for $\alpha\in\M^1_f$, are pairwise incomparable nodes in $\W_\xi$ , then $f\in W$.
		\item[(iii)] The norming set of $T^\xi_{ess\- inc}$ contains the norming set of Tsirelson's original space, i.e., the Tsirelson extension of $G_0$.
	\end{itemize}
	
\end{rem}

\begin{prop}\label{first space unique}
	The space $T^\xi_{ess\-inc}$ admits $\ell_1$ a uniformly unique joint spreading model with respect to $\mathscr{F}_b(T^\xi_{ess\-inc})$.
\end{prop}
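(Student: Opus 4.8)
The plan is to mimic the proof of Proposition~\ref{first space unique as model}, replacing the use of Proposition~\ref{proposition incomparable} with the stronger Lemma~\ref{combinatorial lemma essentially incomparable}, and then handling separately the two ``halves'' $G^1_i$ and $G^2_i$ that the Lemma produces. First I would fix an array $(x^i_j)_j$, $1\le i\le l$, of normalized block sequences in $T^\xi_{ess\-inc}$ and an $\varepsilon>0$, and, after passing to a subsequence, arrange that the blocks are successively supported across $i$ and $j$. For each $i$ and $j$ choose a norming functional $f^i_j=\sum_{\alpha\in\mathcal M^1_{f^i_j}}f^i_{j,\alpha}/2^{k^i_{j,\alpha}}+(\text{the }G_0\text{ part})$ with $f^i_j(x^i_j)\ge 1-\varepsilon$ and all contributions positive. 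As in the earlier proof, form for each $j$ the probability measure $\mu_j$ on $\tT_\xi$ supported on the points $\widetilde{t_{f^i_{j,\alpha}}}$ with mass proportional to $\lambda^i_{j,\alpha}=f^i_{j,\alpha}(x^i_j)/2^{k^i_{j,\alpha}}$; the disjointness of the blocks gives that the sets $\cup\{\supp g_t:\tt\in\supp\mu_j\}$ are disjoint, which is exactly the hypothesis of Lemma~\ref{combinatorial lemma essentially incomparable}.

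Next I would apply Lemma~\ref{combinatorial lemma essentially incomparable} to obtain $L\in[\N]^\infty$ and, for each $j\in L$, disjoint subsets $G^1_j,G^2_j$ of $\supp\mu_j$ with $\mu_j(\tT_\xi\setminus(G^1_j\cup G^2_j))<\varepsilon$, where $\{t:\tt\in\cup_{j}G^1_j\}$ is essentially incomparable and the $G^2_j$ are ``$\W_\xi$-incomparable'' across distinct $j$. Now, given $k\in\N$ and a strict plegma family $(s_i)_{i=1}^l\in S\text{-}Plm_l([L]^k)$ far enough out, for each $i,j$ I restrict $f^i_{s_i(j)}$ to the indices coming from $G^1_{s_i(j)}$ and to those coming from $G^2_{s_i(j)}$, obtaining by Proposition~\ref{proposition tree analysis}(ii) two functionals $g^{i,1}_j,g^{i,2}_j\in W_G$, each a subsum $\sum f_\alpha/2^{k_\alpha}$ of the original. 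The essential incomparability of the union of the $G^1$-nodes guarantees that $\tfrac12\sum_{i,j}g^{i,1}_j$ lies in $W$ (here I also use that restricting $g_t$ to a subset keeps the family essentially incomparable, and the Schreier-admissibility of the $(\S,1/2)$-combination, exactly as in the $T^\xi_{inc}$ argument); the $\W_\xi$-incomparability of the $G^2$'s together with Remark~(i) after the definition of $\W_\xi$ (incomparable weights force incomparable nodes) guarantees that $\tfrac12\sum_{i,j}g^{i,2}_j$ also lies in $W$. Whichever of the two carries at least half of the mass of $f^i_{s_i(j)}$ on $x^i_{s_i(j)}$ — summing $(1-\varepsilon)$-sized contributions — gives, after the $\tfrac12$ in the $(\S,1/2)$-operation, a lower bound of order $\tfrac{(1-\varepsilon)}{4}\sum_{i,j}|a_{ij}|$; combined with the trivial upper bound $\|\sum a_{ij}x^i_{s_i(j)}\|\le\sum|a_{ij}|$ this yields a uniform $\ell_1$ joint spreading model. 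One technical point to be careful with: the total $\mu_j$-mass may split unevenly between $G^1_j$ and $G^2_j$ across the various $i$, so I would either run the mass-splitting at the level of each individual measure $\tfrac{1}{f^i_j(x^i_j)}\sum_\alpha\lambda^i_{j,\alpha}\delta_{\widetilde{t^i_{j,\alpha}}}$ (as was done for \eqref{each measure on Gj}), or take $\delta$ small enough that for every $i$ the fraction captured is at least $(1-\varepsilon)^2$, and then pick for each $(i,j)$ the dominant of the two pieces, absorbing the resulting loss of a further factor of $2$ into the constant.

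The main obstacle, and the reason Lemma~\ref{combinatorial lemma essentially incomparable} was set up with two sets rather than one, is that in $T^\xi_{ess\-inc}$ the admissibility condition for membership in $W$ (essential incomparability of the $G_1$-weight nodes) is genuinely weaker than incomparability in $\tT_\xi$, and a single diagonalization producing incomparable supports — as in Proposition~\ref{proposition incomparable} — is not available because two nodes can be incomparable in $\tT_\xi$ while their weights are comparable in $\W_\xi$, and vice versa. Thus the real work is hidden in that combinatorial lemma and in verifying that the two resulting families assemble into legitimate elements of $W$; the rest is a bookkeeping adaptation of the $T^\xi_{inc}$ computation. I expect the cleanest writeup to isolate the claim ``$g=\tfrac12\sum_{i=1}^l\sum_{j=1}^k g^{i,\kappa}_j\in W$ for $\kappa=1,2$'' as the one nontrivial verification inside the proof, citing Remark~(i) after Definition of $\W_\xi$ for $\kappa=2$ and the stability of essential incomparability under support-shrinking for $\kappa=1$.
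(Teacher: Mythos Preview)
Your plan coincides with the paper's: build measures on $\tT_\xi$ from the $G_1$-leaves of near-norming functionals, apply Lemma~\ref{combinatorial lemma essentially incomparable} to split the supports into $G^1_j$ and $G^2_j$, restrict each $f^i_{s_i(j)}$ accordingly, check that each family assembles into an element of $W$ via the $(\mathcal S,1/2)$-operation, and keep whichever half dominates. The two verifications you isolate --- item (iii) of the lemma for $\kappa=1$, and item (iv) together with the observation that incomparable weights render the essential-incomparability condition vacuous for $\kappa=2$ --- are exactly the ones the paper relies on.

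There is one genuine oversight. Your $g^{i,1}_j+g^{i,2}_j$ recovers only those $G_1$-leaves of $f^i_{s_i(j)}$ that land in $G^1_{s_i(j)}\cup G^2_{s_i(j)}$; the $G_0$-leaves have been discarded entirely. If the $G_0$-contribution to $f^i_j(x^i_j)$ is large --- and nothing forbids it being all of it --- neither restriction carries appreciable mass and the lower bound collapses. The repair is painless, because the essential-incomparability constraint defining $W$ concerns only $\mathcal M^1_f$: adjoin all $G_0$-leaves of $f^i_{s_i(j)}$ to, say, $g^{i,2}_j$; the assembled $\tfrac12\sum g^{i,2}_j$ remains in $W$, and now the two pieces together miss at most the $\varepsilon$-mass dropped by the lemma. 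This is what the paper does (its $\mathcal M^2_{i,j}=\mathcal M^i_j\setminus\mathcal M^1_{i,j}$ absorbs everything outside $G^1_j$, including all $G_0$-leaves, into the second functional), and it also first disposes of the degenerate case $\lim_j\mu_j(\tT_\xi)=0$, in which one may assume all leaves lie in $G_0$ and the conclusion is immediate; you should handle that case before normalizing $\mu_j$ to a probability measure.
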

\begin{proof}
	Let $(x^i_j)_j$, $1\le i \le l$, be an array of normalized block sequences in $T^\xi_{ess\-inc}$ and fix $\varepsilon>0$. Passing to a subsequence, we may assume that $\supp x^{i_1}_j<\supp x^{i_2}_{j+1}$ for all $i_1, i_2=1,\ldots,l$ and $j\in\N$. For each $i=1,\ldots,l$ and $j\in\N$, pick a functional $f^i_j=\sum_{\alpha\in \M^i_j}f^i_{j,\alpha}/2^{k^i_{j,\alpha}}$ in $W$ with $f^i_j(x^i_j)\ge 1-\varepsilon$ and $f^i_{j,\alpha}(x^i_j)>0$ for every $\alpha\in\M^i_j$, where $\mathcal{M}^i_j$ denotes the set of all maximal nodes of a fixed tree analysis of $f^i_j$. Moreover, for each $\alpha\in \M^1_{f^i_j}=\{\alpha\in\M^i_j:f^i_{j,\alpha}\in G_1\}$, define $t^i_{j,\alpha}=t_{f^i_{j,\alpha}}$ and, for each $j\in\N$, the measure $\mu_j$ as follows:
	\[
	\mu_j = \sum_i\sum_{\alpha\in \M^1_{f^i_j}}\frac{f^i_{j,\alpha}(x^i_j)}{2^{k^i_{j,\alpha}}}\delta_{\tt^i_{j,\alpha}}.
	\]
	Passing to a subsequence assume that $\lim_j\mu_j(\tT_\xi)=c$. If $c=0$, then we may assume that $f^i_{j,\alpha}\in G_0$ for every $i=1,\ldots,l$, $j\in\N$ and $\alpha\in\M^i_j$, in which case the desired result is immediate. Hence, if $c>0$, applying Lemma \ref{combinatorial lemma essentially incomparable} and passing to a subsequence, we obtain $(G^1_j)_j$, $(G^2_j)_j$ satisfying items (i) - (iv) with $\mu_j(\tT_\xi\setminus G^1_j\cup G^2_j)<1/8$. Then, for each pair $(i,j)$, set
	\[
	\mathcal{M}^1_{i,j}=\{\alpha\in\M^i_{f^i_j}:t^i_{j,\alpha}\in G^1_j  \}\quad\text{ and }\quad\mathcal{M}^2_{i,j}=\mathcal{M}_{i}^j\setminus \mathcal{M}^1_{i,j}
	\]
	and
	\[
	f^k_{i,j}=\sum_{\alpha\in \mathcal{M}^k_{i,j}}f^i_{j,\alpha}/2^{k^i_{j,\alpha}},\quad k=1,2.
	\]
	Note in particular that, for every pair $(i,j)$, the fact that $\mu_j(\tT_\xi\setminus G^1_j\cup G^2_j)<1/8$ implies that $|f^i_j(x^i_j)-(f^1_{i,j}(x^i_j)+f^2_{i,j}(x^i_j))|<1/8$ and hence that there exists $k=1,2$ such that $f^k_{i,j}(x^i_j)\ge (7-\varepsilon)/16$. Set
	\[
	A_k=\{(i,j) :f^{k}_{i,j}(x^i_j)\ge (7-8\varepsilon)/16 \},\quad k=1,2.
	\]
	Let $n\in\N$, $\{\lambda_{ij}\}_{i=1,j=1}^{l,n}\subset[-1,1]$ with $\sum_{i,j}|\lambda_{ij}|=1$ and $s=(s_i)_{i=1}^l\in S\- Plm_l([\N]^k)$ with $ln\le\min\supp x^1_{s_1(1)}$. Then let $k=1,2$ be such that $\sum_{(i,s_i(j))\in A_k}|\lambda_{ij}|\ge 1/2$ and observe that $f=1/2\sum_{(i,s_i(j))\in A_k}f^k_{i,s_i(j)}$ is in $W$. Hence, we calculate
	\[
	\bigg\|\sum_{i=1}^l\sum_{j=1}^n|\lambda_{ij}|x^i_{s_i(j)} \bigg\|\ge f\bigg(\sum_{i=1}^l\sum_{j=1}^n|\lambda_{ij}|x^i_{s_i(j)}\bigg)=\frac{1}{2}\sum_{(i,s_i(j))\in A_k}|\lambda_{ij}|f^k_{i,s_i(j)}\big(x^i_{s_i(j)}\big)\ge\frac{7-8\varepsilon}{32}
	\]	
	and due to unconditionality this yields that
	\[
	\bigg\|\sum_{i=1}^l\sum_{j=1}^n\lambda_{ij}x^i_{s_i(j)}\bigg\|\ge\frac{7-8\varepsilon}{32}.
	\]
\end{proof}

It remains to show that for every $M\in[\N]^\infty$, the space $T^\xi_{ess\-inc}$  contains a $c_0$-tree of height $\omega^\xi$ supported by $(e_j)_{j\in M}$. To this end, let us recall the following definition.

\begin{dfn}
	Let $n\in\N$ and $\varepsilon>0$. We say that a convex combination $x=\sum_{i\in \Delta}\lambda_ie_i$ in $c_{00}(\N)$ is an $(n,\varepsilon)$-special convex combination if
	\begin{itemize}
		\item[(i)] $\Delta\in \S_n$ and
		\item[(ii)] $\sum_{i\in \Delta'}\lambda_i<\varepsilon$ for every $\Delta'\in \S_m$ with $m<n$. 		
	\end{itemize}	
\end{dfn}

The main ingredient in the proof of the following proposition is the notion of repeated averages, first defined by Argyros, Mercourakis, and Tsarpalias.    in \cite{AMT}. We refer the reader to \cite[Chapter 2]{AT} for further details.

\begin{prop}
	For every $n\in\N$ and $\varepsilon>0$, there is a $k\in\N$ such that, for every maximal subset $F$ of $\S_n$ with $k<F$, there exists an $(n,\varepsilon)$-special convex combination $x$ in $c_{00}(\N)$ with $\supp x=F$.
\end{prop}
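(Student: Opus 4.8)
The plan is to produce the desired $(n,\varepsilon)$-special convex combination using the repeated averages of Argyros, Mercourakis, and Tsarpalias. Recall that for a fixed countable ordinal $\gamma$ and an infinite $M\subset\N$, the repeated average of order $\gamma$ on $M$ is a probability measure $\xi^\gamma_M$ supported on the minimal initial segment of $M$ that lies in $\S_\gamma$, and it has the crucial ``flattening'' property that for any $\beta<\gamma$ and any $E\in\S_\beta$ one has $\xi^\gamma_M(E)$ small provided $\min M$ is large (this is the content of \cite[Chapter 2]{AT}; see in particular the norm estimates of repeated averages against Schreier families of lower index). So first I would recall the precise statement: for every $n\in\N$ and $\varepsilon>0$ there is $k\in\N$ such that whenever $k<M$, the repeated average $\xi^n_M=\sum_{i}\lambda_ie_i$ satisfies (a) $\supp\xi^n_M$ is the $\preceq$-minimal element of $\S_n$ contained in $M$, and (b) $\sum_{i\in \Delta'}\lambda_i<\varepsilon$ for every $\Delta'\in\S_m$ with $m<n$.

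Next, given a maximal subset $F$ of $\S_n$ with $k<F$, I would observe that by maximality $F$ is exactly the $\preceq$-minimal member of $\S_n$ that is an initial segment of the infinite set $M:=F\cup\{j\in\N : j>\max F\}$ — indeed $F\in\S_n$ and no proper extension of $F$ within $M$ can lie in $\S_n$ (that is precisely what maximality of $F$ in $\S_n$ means, together with the spreading/hereditary structure of $\S_n$). Hence $\supp\xi^n_M=F$. Setting $x=\xi^n_M=\sum_{i\in F}\lambda_ie_i$ gives a convex combination with $\supp x=F\in\S_n$, so item (i) of the definition holds, and item (ii) holds by property (b) above, since $k<F=\min M$. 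This finishes the argument.

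The only genuinely non-routine point is verifying the ``small mass on lower Schreier sets'' estimate for the repeated averages, i.e. that a single $k$ (depending on $n,\varepsilon$ but not on $M$) works uniformly. This is exactly the quantitative estimate on repeated averages recorded in \cite{AT}; I would cite it rather than reprove it, but if a self-contained argument were wanted, one proceeds by induction on $n$: for $n=0$ the measure is a point mass and the statement is trivial for $k\geq 1/\varepsilon$; for the successor step one writes $\xi^{n}_M$ as a convex combination of the repeated averages $\xi^{n-1}_{M_1},\xi^{n-1}_{M_2},\dots$ of order $n-1$ on the successive pieces of $M$ cut out by the construction, uses the inductive estimate on each piece against $\S_m$ with $m<n-1$, and handles the index $m=n-1$ case by noting that an $\S_{n-1}$ set meets at most finitely many of the pieces (controlled by $\min M$) and the weights attached to the pieces decay; limit ordinals are handled through the fixed sequence $(\alpha(n,j))_j$ in the definition of $\S_n$ and the requirement $j\le E$. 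The bookkeeping here is the main obstacle, but it is standard.
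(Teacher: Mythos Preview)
Your approach is correct and matches the paper's exactly: the paper does not write out a proof of this proposition at all, but simply points to the repeated averages of Argyros--Mercourakis--Tsarpalias \cite{AMT} and refers the reader to \cite[Chapter 2]{AT}, which is precisely the machinery you invoke. Your sketch---build $\xi^n_M$ on $M=F\cup\{j:j>\max F\}$, use that its support is the unique maximal $\S_n$-initial segment of $M$ (hence $F$), and cite the uniform estimate against lower Schreier families---is the intended argument; the only (harmless) inaccuracies are that for $n=0$ condition (ii) is vacuous so no restriction on $k$ is needed, and the limit-ordinal clause in your inductive sketch is irrelevant here since the proposition concerns $n\in\N$.
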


For a functional $f$ in $W$ with tree analysis $(f_\alpha)_{\alpha\in \mathcal{A}}$, we define the height of $f$, denoted by $h(f)$, as the maximum of $|a|$ over all maximal nodes $\alpha\in\mathcal{A}$. Moreover, if $f=m^{-1}_j\sum_{n\in\N}g(n)e^*_n$ is in $W$, we say that $f$ is a weighted functional and define the weight of $f$ as $w(f)=m_j$.

\begin{lem}\label{admissibility height of tree}
	Let $j\in \N$ and $f$ be a functional in $W$ with a tree analysis $(f_\alpha)_{\alpha\in \mathcal{A}}$ such that $w(f_\alpha)<m_j$ for every $\alpha\in\mathcal{M}^1_f$. Then $\supp f\in \S_{k}$, where $k\le n_{j-1}+h(f)$.
\end{lem}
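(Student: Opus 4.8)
The plan is to prove, by induction on the given tree analysis $(f_\alpha)_{\alpha\in\mathcal{A}}$, the sharper statement that $\supp f_\alpha\in\S_{n_{j-1}+h_\alpha}$ for every $\alpha\in\mathcal{A}$, where $h_\alpha$ denotes the height of the subtree of $\mathcal{A}$ rooted at $\alpha$; thus $h_\alpha=0$ exactly when $\alpha$ is a maximal node, and $h_0=h(f)$ for the root $0$. Taking $\alpha=0$ then yields $\supp f\in\S_{n_{j-1}+h(f)}$, which is the assertion with $k=n_{j-1}+h(f)$. Throughout I use two elementary facts about the Schreier families: the monotonicity $\S_\gamma\subseteq\S_\delta$ whenever $\gamma\le\delta$, and the fact that, by definition, $\S_{\beta+1}$ is precisely the collection of all unions $\bigcup_{i=1}^dE_i$ with $E_1<\cdots<E_d$ in $\S_\beta$ and $d\le\min E_1$.

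For the base of the induction, let $\alpha$ be a maximal node, so $h_\alpha=0$ and $f_\alpha\in G=G_0\cup G_1$. If $f_\alpha\in G_0$ then $\supp f_\alpha$ is a singleton, hence lies in $\S_0\subseteq\S_{n_{j-1}}$. If $f_\alpha\in G_1$, write $f_\alpha=m_{j'}^{-1}\sum_n g(n)e^*_n$ with $\supp g\in\S_{n_{j'}}$; then $\alpha\in\M^1_f$, so by hypothesis $w(f_\alpha)=m_{j'}<m_j$, and since $(m_i)_i$ is strictly increasing this forces $j'\le j-1$, whence $\supp f_\alpha=\supp g\in\S_{n_{j'}}\subseteq\S_{n_{j-1}}$ because $(n_i)_i$ is increasing. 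Either way $\supp f_\alpha\in\S_{n_{j-1}}=\S_{n_{j-1}+h_\alpha}$.

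For the inductive step, let $\alpha$ be non-maximal, so that $f_\alpha=\tfrac12\sum_{s\in S(\alpha)}f_s$, the supports $(\supp f_s)_{s\in S(\alpha)}$ are successive, and $(f_s)_{s\in S(\alpha)}$ is an $(\S,1/2)$-admissible family, i.e. $|S(\alpha)|\le\min\supp f_{s_0}$, where $s_0\in S(\alpha)$ is the successor with leftmost support. Every $s\in S(\alpha)$ satisfies $h_s\le h_\alpha-1$, so the inductive hypothesis together with monotonicity gives $\supp f_s\in\S_{n_{j-1}+h_s}\subseteq\S_{n_{j-1}+h_\alpha-1}$. Thus $\supp f_\alpha=\bigcup_{s\in S(\alpha)}\supp f_s$ is a union of successive sets all lying in the single family $\S_{n_{j-1}+h_\alpha-1}$, whose number $|S(\alpha)|$ is at most $\min\supp f_{s_0}=\min\bigl(\bigcup_{s\in S(\alpha)}\supp f_s\bigr)$; by the recursive description of the Schreier families recalled above, $\supp f_\alpha\in\S_{n_{j-1}+h_\alpha}$, completing the induction.

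I do not anticipate a genuine difficulty: this is the standard observation that each $(\S,1/2)$-operation raises the Schreier index by one, applied to the leaf bound $\S_{n_{j-1}}$ (which comes from the weight constraint $w(f_\alpha)<m_j$, forcing every weighted leaf into $\S_{n_{j-1}}$ since $n_i\le n_{j-1}$ once $i\le j-1$, together with the trivial bound $\S_0$ for the $\pm e^*_n$ leaves). The only point deserving care is that the leaves of $\mathcal{A}$ occur at different depths; this is why the induction is organized around the subtree-height $h_\alpha$ rather than a single "number of levels" parameter, and why the monotonicity $\S_\gamma\subseteq\S_\delta$ must be invoked to bring all the children's supports into one common Schreier family before merging them at an internal node.
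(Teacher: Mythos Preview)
Your proof is correct and follows essentially the same route as the paper's: both argue by induction on the tree analysis, using that leaves lie in $\S_{n_{j-1}}$ (from the weight bound) and that each $(\S,1/2)$-operation raises the Schreier index by one. The only cosmetic difference is that you index the induction by the subtree height $h_\alpha$, whereas the paper indexes it by the level $|\alpha|$ (with the bound $k_\alpha\le n_{j-1}+(h(f)-|\alpha|)$); your version is a bit more explicit about invoking monotonicity of the Schreier families to handle leaves at varying depths, but the underlying argument is the same.
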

\begin{proof}
	For each $\alpha\in\mathcal{A}$, let $k_\alpha\in\N$ be such that $\supp f_\alpha\in\S_{k_\alpha}$. Then, since $w(f_\alpha)<m_j$, we have that $k_\alpha\le n_{j-1}$ for every $\alpha\in\M^1_f$. Note then that, as follows from the definition of $W_G$, this implies that $k_\alpha\le n_{j-1}+1$ for every $\alpha\in\mathcal{A}$ with $|\alpha|=h(f)-1$. In particular, a finite induction argument yields that $k_\alpha\le n_{j-1}+i$ whenever $|\alpha|=h(f)-i$ and this proves the desired result.
\end{proof}

\begin{prop}\label{special convex combination seminormalized}
	Let $j\in\N$ and $x=\sum_{i\in\Delta}\lambda_ie_i$ be an $(n_j,m^{-2}_j)$-special convex combination, then
	\[
	\frac{1}{m_j}\le\|x\|_W\le\frac{1}{m_j}+\frac{1}{m^2_j}.
	\]
\end{prop}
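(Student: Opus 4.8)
The plan is to prove the two-sided estimate separately. For the lower bound $\|x\|_W \ge 1/m_j$, I would simply test against the single weighted functional $f = m_j^{-1}\sum_{i\in\Delta}\sgn(\lambda_i)e_i^*$ associated to the node $(\mathrm{sgn}\text{-pattern on }\Delta, m_j)$; this is legitimate once we check $\Delta\in\S_{n_j}$, which is exactly condition (i) in the definition of an $(n_j,m_j^{-2})$-special convex combination, and that the singleton family $\{(g,m_j)\}$ is essentially incomparable (trivially true). Then $f(x) = m_j^{-1}\sum_{i\in\Delta}|\lambda_i| = m_j^{-1}$ since $x$ is a convex combination. Actually one must be a little careful that the root weight $m_j$ is admissible as the first weight of a node of $\T_\xi$; since $j$ is arbitrary here, the relevant functional is just an element of $G_1$, which is in $W$ with a trivial (single-node) tree analysis, so this is fine.

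For the upper bound $\|x\|_W \le 1/m_j + 1/m_j^2$, I would take an arbitrary $f\in W$ with a tree analysis $(f_\alpha)_{\alpha\in\mathcal{A}}$ and split the maximal nodes according to the weight of the associated weighted functional: let $\mathcal{M}_{<} = \{\alpha\in\mathcal{M}^1_f : w(f_\alpha) < m_j\}$, let $\mathcal{M}_{\ge}$ be the remaining nodes of $\mathcal{M}^1_f$, and let $\mathcal{M}_0$ be the maximal nodes with $f_\alpha\in G_0$. Correspondingly write $f(x) = g_<(x) + g_\ge(x) + g_0(x)$ using the decomposition from Proposition \ref{proposition tree analysis}(i)--(ii), where each piece is the restriction of $f$ to the union of the relevant supports (so each piece still has $\ell_\infty$-norm at most $1$ as a functional, and in particular $g_0$ contributes at most through coordinatewise estimates). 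The high-weight part $g_\ge$ is the easy one: each $f_\alpha$ with $\alpha\in\mathcal{M}_\ge$ has weight $\ge m_{j+1}\ge m_j^2$, wait — more precisely $w(f_\alpha)\ge m_{j}$, and I want the sharper conclusion, so I should say weight $> m_j$, hence $\ge m_{j+1}\ge m_j^2$; using that the weighted functionals appearing are essentially incomparable (so their associated nodes $m_{j_{t}}$ behave well) together with the $\ell_2$-type bound coming from $\supp g_i\in\S_{n_{j_i}}$ and $\|x\|_{\ell_1}=1$, one gets $g_\ge(x)\le 1/m_j^2$ — this needs the Schreier-admissibility structure but is essentially the standard "high weight functionals see a special convex combination as small" argument.

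The substantive part, and the main obstacle, is bounding $g_<(x) + g_0(x) \le 1/m_j$. Here I would use Lemma \ref{admissibility height of tree}: the functional $g$ obtained by keeping only the low-weight nodes together with the $G_0$-nodes of $f$ is, up to the structure of $W_G$, a functional all of whose weighted maximal-node functionals have weight $< m_j$, so $\supp g\in\S_k$ with $k\le n_{j-1}+h(g)$. Now the height $h(g)$ of $g$ is at most $\log_2 m_j^2$ — because $g$ is built from the $(\mathcal{S},1/2)$-operation and $\|g\|_\infty\le 1$, so a node at depth $h$ contributes a coefficient of size $2^{-h}$, and if $h > \log_2 m_j^2$ then on the special convex combination $x$ this whole branch contributes less than $m_j^{-2}$; more carefully one argues that the effective support of $g$ lies in $\S_k$ with $k \le n_{j-1}+\log_2 m_j^2 < n_j$ by condition (ii) on the sequences $(n_j)$. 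Then since $x$ is an $(n_j,m_j^{-2})$-special convex combination, condition (ii) in that definition gives $g(x)\le \|g\|_\infty\sum_{i\in\Delta\cap\supp g}\lambda_i < m_j^{-2}$ — but this is already smaller than $1/m_j$, and in fact I realize the clean way is: the part of $f$ of weight exactly... hmm. Let me restate: the honest bottleneck is showing that the "low complexity" part of any $f\in W$ has effective support in some $\S_m$ with $m<n_j$, via Lemma \ref{admissibility height of tree} plus the height bound $h(f)\le\log_2 m_j^2$ forced by $\|f\|_\infty\le 1$ and the definition of an $(n_j,m_j^{-2})$-s.c.c.\ (anything of larger height is automatically negligible); once this is established, property (ii) of the special convex combination immediately yields a bound $< m_j^{-2} \le m_j^{-2}$, and combining with the $g_\ge$ estimate and the trivial observation that the weight-$m_j$ part is bounded by $m_j^{-1}$ gives the total $\le m_j^{-1}+m_j^{-2}$. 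I would organize the final computation by isolating exactly the maximal nodes of weight $m_j$ (contributing $\le 1/m_j$ as in the lower-bound functional) from everything else (contributing $\le 1/m_j^2$), which is the cleanest bookkeeping.
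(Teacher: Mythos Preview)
Your lower bound is fine: any element of $G_1$ lies in $W$ with a one-node tree analysis, so there is no admissibility issue.

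For the upper bound your organizing principle---splitting the maximal nodes of the tree analysis by the \emph{weight} of $f_\alpha$---is different from the paper's and has a real gap. The paper instead splits the coordinate set $\Delta$ by the \emph{value} $|f(e_i)|$: with $\Delta_1=\{i\in\Delta:|f(e_i)|>m_j^{-1}\}$ and $\Delta_2=\Delta\setminus\Delta_1$, the single threshold $m_j^{-1}$ simultaneously forces every weighted leaf of $f_1=f|_{\Delta_1}$ to have weight $<m_j$ \emph{and} every leaf of $f_1$ to sit at depth $<\log_2 m_j$ (since $|f(e_i)|\le 2^{-k_\alpha}$). Lemma~\ref{admissibility height of tree} then gives $\supp f_1\in\mathcal S_l$ with $l<n_j$, hence $|f_1(x)|<m_j^{-2}$ by the s.c.c.\ property, while trivially $|f|_{\Delta_2}(x)|\le m_j^{-1}$. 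The two pieces add to exactly $m_j^{-1}+m_j^{-2}$.

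In your scheme, restricting $f$ to the leaves of weight $<m_j$ together with the $G_0$-leaves produces a functional $g$ whose weighted leaves are all light but whose tree analysis can still be as tall as that of $f$; nothing in this restriction truncates depth. So Lemma~\ref{admissibility height of tree} only yields $\supp g\in\mathcal S_{n_{j-1}+h(f)}$, which is useless for large $h(f)$. Your proposed workaround (``branches with $h>\log_2 m_j^2$ contribute $<m_j^{-2}$'') is precisely a coordinate-value cutoff reintroduced inside the low-weight piece; once you make that precise you are doing the paper's argument, but redundantly, and the separate weight-$m_j$ piece you already split off now costs you an extra $m_j^{-1}$ (or, bundling weight $>m_j$ with the rest, an extra $m_j^{-2}$), so you do not recover the stated constant $m_j^{-1}+m_j^{-2}$. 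Also, the essentially-incomparable structure plays no role in this proposition, and the bound on the high-weight part is simply $\|g_{>}\|_\infty\le m_{j+1}^{-1}\le m_j^{-2}$---no ``$\ell_2$-type'' estimate or tree structure is needed there. In short: split coordinates by $|f(e_i)|$, not leaves by weight.
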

\begin{proof}
	Pick an $f$ in $W$ and define  $\Delta_1=\{i\in\Delta:|f(e_i)|>m_j^{-1} \}$ and $\Delta_2 =\Delta\setminus \Delta_1$. Consider the tree analysis $(f^1_\alpha)_{\alpha\in\mathcal{A}}$ of $f_1=f|_{\Delta_1}$ and note that $w(f^1_\alpha)<m_j$ for every $\alpha\in\mathcal{M}^1_{f_{1}}$. Indeed, if $w(f^1_\alpha)=m_{j'}\ge m_j$ for some $\alpha$, then for any $i\in \supp f^1_\alpha$ we have that $|f(e_i)|\le m^{-1}_{j'}$ and this is a contradiction. Moreover, the fact that $|f_1(e_i)|>m^{-1}_j$ for every $i\in \Delta_1=\supp f_1$ implies that $h(f_1)<\log_2m_{j-1}$ and hence the previous proposition yields that $\supp f_1\in\S_l$, where $l\le\log_2m_j+n_{j-1}<n_j$. Therefore, since $x=\sum_{i\in\Delta}\lambda_ie_i$ is an $(n_j,m^{-2}_j)$\-special convex combination, we have that
	\[
	|f|_{\Delta_1}(\sum_{i\in\Delta}\lambda_ie_i)|\le\sum_{i\in \Delta_1}\lambda_i<\frac{1}{m^{2}_j}.
	\]
	We also calculate
	\[
	|f|_{\Delta_2}(\sum_{i\in\Delta}\lambda_ie_i)|\le \frac{1}{m_j}\sum_{i\in \Delta_2}\lambda_i\le \frac{1}{m_j}
	\]
	and conclude that $\|x\|_W\le m^{-1}_j+m^{-2}_j$. For the remaining part notice that the functional $f=m_j^{-1}\sum_{i\in \Delta}e^*_i$ is in $W$.
\end{proof}

\begin{prop}\label{special convex combination upper bound}
	Let $j\in\N$ and $x=\sum_{i\in\Delta}\lambda_ie_i$ be an $(n_j,m^{-2}_j)$-special convex combination. Then $|f(x)|<2m_j^{-2}$, for every $f\in W$ with a tree analysis $(f_\alpha)_{\alpha\in\mathcal{A}}$ such that $w(f_\alpha)\neq m_j$ for all $\alpha\in \M^1_f$.
\end{prop}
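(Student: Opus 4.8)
The plan is to work with a tree analysis $(f_\alpha)_{\alpha\in\mathcal A}$ of $f$ and split $f$ into the piece carried by the weighted maximal nodes of weight larger than $m_j$ and the piece carried by the $G_0$--maximal nodes together with the weighted maximal nodes of weight smaller than $m_j$; the hypothesis $w(f_\alpha)\neq m_j$ is precisely what makes these two pieces exhaust $f$. First I would dispose of the degenerate case in which the root of $\mathcal A$ is itself maximal, i.e.\ $f\in G_0\cup G_1$. If $f=\pm e_n^*$ then $|f(x)|=\lambda_n<m_j^{-2}$ by condition (ii) in the definition of an $(n_j,m_j^{-2})$--special convex combination, since $\{n\}\in\S_0$ and $0<n_j$. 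If $f\in G_1$ with $w(f)=m_{j'}$, then $j'\neq j$ by hypothesis; when $j'<j$ we have $\supp f\in\S_{n_{j'}}$ with $n_{j'}\le n_{j-1}<n_j$, so condition (ii) gives $|f(x)|\le m_{j'}^{-1}\sum_{i\in\supp f\cap\Delta}\lambda_i<m_j^{-2}$, and when $j'>j$ we just bound $|f(x)|\le m_{j'}^{-1}\le m_{j+1}^{-1}\le m_j^{-2}$ using $m_{j+1}\ge m_j^2$. In every case $|f(x)|\le m_j^{-2}<2m_j^{-2}$.

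Now assume the root of $\mathcal A$ is not maximal, so $k_\alpha=|\alpha|\ge 1$ for every maximal node $\alpha$, and by Proposition \ref{proposition tree analysis} write $f=\sum_{\alpha\in\mathcal M}2^{-k_\alpha}f_\alpha$, with $\mathcal M$ the set of maximal nodes. Set $\mathcal M_{>}=\{\alpha\in\mathcal M: f_\alpha\in G_1,\ w(f_\alpha)>m_j\}$ and $\mathcal M_{<}=\mathcal M\setminus\mathcal M_{>}$ (the $G_0$--leaves and the $G_1$--leaves of weight $<m_j$), and let $E_{\bullet}=\bigcup_{\alpha\in\mathcal M_{\bullet}}\supp f_\alpha$, $g_{\bullet}=f|_{E_{\bullet}}$ for $\bullet\in\{<,>\}$, so that $f(x)=g_{>}(x)+g_{<}(x)$. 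By Proposition \ref{proposition tree analysis}(ii) both $g_{<}$ and $g_{>}$ lie in $W_G$ and inherit tree analyses whose weighted maximal nodes are weighted maximal nodes of $f$; since $\{t_{f_\alpha}:\alpha\in\M^1_{g_{\bullet}}\}\subseteq\{t_{f_\alpha}:\alpha\in\M^1_{f}\}$ and subsets of essentially incomparable sets are essentially incomparable, in fact $g_{<},g_{>}\in W$ (and likewise the further restrictions used below).

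For $g_{>}$, every weighted maximal node occurring in it has weight at least $m_{j+1}$ and depth at least $1$, hence $\|g_{>}\|_\infty\le\tfrac12 m_{j+1}^{-1}$ and $|g_{>}(x)|\le\tfrac12 m_{j+1}^{-1}\le\tfrac12 m_j^{-2}$. For $g_{<}$ I would split once more: the nonzero entries of a single maximal node of $g_{<}$ all have the same modulus, so $\Delta_1=\{i\in\supp g_{<}:|g_{<}(e_i)|\ge m_j^{-2}\}$ and $\Delta_2=\supp g_{<}\setminus\Delta_1$ are unions of supports of maximal nodes and $g_{<}|_{\Delta_1},g_{<}|_{\Delta_2}\in W$. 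A maximal node $\alpha$ meeting $\Delta_1$ satisfies $2^{-|\alpha|}\ge m_j^{-2}$, i.e.\ $|\alpha|\le\log_2 m_j^2$, so $h(g_{<}|_{\Delta_1})\le\log_2 m_j^2$; as every weighted maximal node of $g_{<}|_{\Delta_1}$ has weight $<m_j$, Lemma \ref{admissibility height of tree} gives $\supp(g_{<}|_{\Delta_1})\in\S_k$ with $k\le n_{j-1}+\log_2 m_j^2<n_j$, the last inequality holding by the choice of $(n_j)_j$ (it is condition (ii) for $j\ge 2$, and for $j=1$ it reads $6>5$). Condition (ii) of special convex combinations then gives $\sum_{i\in\Delta_1}\lambda_i<m_j^{-2}$, and since $\|g_{<}\|_\infty\le\tfrac12$ we obtain $|g_{<}|_{\Delta_1}(x)|\le\tfrac12\sum_{i\in\Delta_1}\lambda_i<\tfrac12 m_j^{-2}$; on $\Delta_2$ we simply have $|g_{<}|_{\Delta_2}(x)|\le\sum_{i\in\Delta_2}\lambda_i|g_{<}(e_i)|<m_j^{-2}\sum_{i\in\Delta_2}\lambda_i\le m_j^{-2}$. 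Adding the three bounds, $|f(x)|\le|g_{>}(x)|+|g_{<}|_{\Delta_1}(x)|+|g_{<}|_{\Delta_2}(x)|<\tfrac12 m_j^{-2}+\tfrac12 m_j^{-2}+m_j^{-2}=2m_j^{-2}$.

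The heart of the argument, and the only place the hypothesis $w(f_\alpha)\neq m_j$ is used, is the clean splitting of $\mathcal M$ according to whether a weighted maximal node has weight above or below $m_j$: the high-weight part is killed by the growth $m_{j+1}\ge m_j^2$, while the low-weight part is handled, after discarding the entries below $m_j^{-2}$, by the admissibility estimate of Lemma \ref{admissibility height of tree} together with the gap $n_j>\log_2 m_j^2+n_{j-1}$ — exactly the mechanism already used in the proof of Proposition \ref{special convex combination seminormalized}, which forces the relevant support to be $\S_m$--admissible for some $m<n_j$, the range on which a special convex combination carries mass less than $m_j^{-2}$. I expect the only delicate part to be the arithmetic that keeps the three error terms strictly below $2m_j^{-2}$ in total, which is why one must retain the factors $\tfrac12$ coming from every maximal node having depth at least $1$ and extract a strict inequality from the $\Delta_2$--term.
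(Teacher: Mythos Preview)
Your argument is correct. The core mechanism is the same as the paper's: a threshold split at level $m_j^{-2}$, the admissibility estimate of Lemma \ref{admissibility height of tree} combined with the gap $n_j>\log_2 m_j^2+n_{j-1}$, and the defining property of special convex combinations.

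The difference is that you perform a preliminary split by weight into $g_>$ and $g_<$ before applying the threshold split to $g_<$, giving three pieces to sum. The paper avoids this: it defines $\Delta_1=\{i\in\Delta:|f(e_i)|>m_j^{-2}\}$ directly on $f$ and then observes that any weighted leaf $\alpha$ meeting $\Delta_1$ must satisfy $w(f_\alpha)<m_j^2\le m_{j+1}$, hence $w(f_\alpha)\le m_j$, hence (by the hypothesis $w(f_\alpha)\neq m_j$) $w(f_\alpha)<m_j$. In other words, high-weight leaves are automatically swallowed by the $\Delta_2$ piece, so only one split and two estimates are needed, yielding $|f|_{\Delta_1}(x)|<m_j^{-2}$ and $|f|_{\Delta_2}(x)|\le m_j^{-2}$. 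Your route works, but the degenerate-case analysis and the extra weight split are unnecessary; what you gain is the somewhat tidier bookkeeping of factors $\tfrac12$ (which the paper does not need because its $\Delta_1$ estimate already carries the strict inequality).
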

\begin{proof}
	Define $\Delta_1=\{i\in\Delta:|f(e_i)|>m^{-2}_{j} \}$ and $\Delta_2=\Delta\setminus \Delta_1$ and let $(f^1_\alpha)_{\alpha\in\mathcal{A}_1}$ be the tree analysis of  $f_1=f|_{\Delta_1}$. Similar arguments as in the previous proof yield that $w(f^1_\alpha)<m^2_j<m_{j+1}$ and hence $w(f^1_\alpha)<m_j$ for all $\alpha\in\M^1_{f_1}$, since $w(f_\alpha)\neq m_j$ for all $\alpha\in M^1_f$. Moreover, since $|f_1(e_i)|>m^2_j$ for all $i\in \supp f_1$, we have that $h(f_1)<\log_2m_j^{2}$ and therefore Proposition \ref{admissibility height of tree} yields that $\Delta_1=\supp f_1\in\S_l$ with $l\le \log_2m_j^{2}+n_{j-1}<n_j$. The fact that $x=\sum_{i\in\Delta}\lambda_ie_i$ is an $(n_j,m^{-2}_j)$\-special convex combination implies that
	\[
	|f_1(\sum_{i\in\Delta}\lambda_ie_i)|\le\sum_{i\in \Delta_1}\lambda_i<\frac{1}{m_j^2}.
	\]	
	We also calculate
	\[
	|f|_{\Delta_2}(\sum_{i\in\Delta}\lambda_ie_i)|\le\frac{1}{m^2_j}\sum_{i\in\Delta_2}\lambda_i\le\frac{1}{m^2_j}
	\]
	and this completes the proof.
\end{proof}

Let $M$ be an infinite subset of the naturals and consider the collection $T_\xi(M)$ of all finite sequences $(x_1,\ldots,x_k)$ of vectors in $c_{00}(\N)$ such that
\begin{itemize}
	\item[(i)] $x_l=m_{j_l}\sum_{i\in \Delta_l}\lambda_ie_i$, where $\sum_{i\in \Delta_l}\lambda_ie_i$ is an $(n_{j_l},m^{-2}_{j_l})$\-special convex combination for every $l=1,\ldots,k$,
	\item[(ii)] $\Delta_l$ is a subset of $M$ for every $l=1,\ldots,k$ and
	\item[(iii)] $((\rchi_{\Delta_1},m_{j_1}),\ldots,(\rchi_{\Delta_k},m_{j_k}))\in\tT_\xi$.
\end{itemize}
Note that $T_\xi(M)$, equipped with the initial segment order, is a well-founded infinite branching tree of height $\omega^\xi$.

\begin{prop}
	Let $M$ be an infinite subset of the naturals and $(x_1,\ldots,x_k)$ be any node of	 $T_\xi(M)$, then $\|x_1+\ldots+x_k\|_W\le 3$.
\end{prop}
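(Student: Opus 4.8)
The plan is to bound $f(x_1+\dots+x_k)$ for an arbitrary functional $f$ in the norming set $W$, with a fixed tree analysis $(f_\alpha)_{\alpha\in\mathcal{A}}$, and to show this never exceeds $3$. Two structural features drive the estimate. First, writing $w_l=m_{j_l}$ for the weight of $x_l$, the conditions $m_0=2$, $m_1=4$, $m_j\ge m_{j-1}^2$ together with $j_1<j_2<\cdots$ give $w_1=4$ and $w_{l+1}\ge w_l^2$, whence $\sum_{l\ge 1}w_l^{-1}<1/3$; every error term below will be absorbed by this geometric series. Second, $x_l/w_l$ is an $(n_{j_l},w_l^{-2})$-special convex combination, so Propositions \ref{special convex combination seminormalized} and \ref{special convex combination upper bound} apply to it: in particular $\|x_l\|_W\le 1+w_l^{-1}$, and $|h(x_l)|<2w_l^{-1}$ for any $h\in W$ whose tree analysis has no maximal $G_1$-node of weight $w_l$.

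The organizing idea is a per-index dichotomy: say $f$ \emph{matches} $l$ if some maximal node $\alpha$ of $\mathcal{A}$ with $f_\alpha\in G_1$ satisfies $w(f_\alpha)=w_l$. For an unmatched $l$, restricting $f$ to $\range x_l$ yields a functional that still lies in $W$ (essential incomparability passes to subsets of the maximal nodes and to smaller supports) and still does not match $l$, so Proposition \ref{special convex combination upper bound} gives $|f(x_l)|<2w_l^{-1}$, and summing over all unmatched indices contributes less than $2/3$. The matched indices require the induction: I would prove the statement by induction on $|\mathcal{A}|$, in the slightly strengthened form that allows $(x_l)_{l=1}^k$ to be \emph{any} finite tuple of scaled special convex combinations with strictly increasing supports and strictly increasing weights, so that sub-tuples remain admissible. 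In the base case a single $f\in G_0$ contributes at most $\max_l\|x_l\|_\infty<1$, while a single weighted $f\in G_1$ matches at most one index $l_0$ (its weight is one number), contributing at most $1$ there and less than $2/3$ on the remaining indices by Proposition \ref{special convex combination upper bound}. In the inductive step $f=\tfrac12\sum_{i=1}^n f_i$ with $n\le\supp f_1<\dots<\supp f_n$: each child $f_i$ lies in $W$ with strictly smaller tree analysis, so the inductive hypothesis applies to $f_i$ and the sub-tuple of $x_l$'s supported inside $\range f_i$; there remain at most $n-1$ summands whose support straddles two consecutive children (at most one per internal boundary, since the $\Delta_l$ are disjoint and increasing), and these must be controlled directly.

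The main obstacle is twofold and intertwined. First, one must see that an $x_l$ can be normed "expensively" (with value of order $\|x_l\|_W$ rather than $O(w_l^{-1})$) only by a part of $f$ that matches $l$, and that this happens at essentially a single level of $\mathcal{A}$ per index — here Lemma \ref{admissibility height of tree} and the special-convex-combination structure bound the Schreier complexity of any piece of $\Delta_l$ that a child of $f$ can carry — so that the matched contributions, after being re-summed \emph{over the tree analysis of $f$} (not over $l$) by reassembling the matching maximal nodes into a single norming functional via Proposition \ref{proposition tree analysis}(ii), total at most $1$, because along any branch of $f$ the matched weights form a chain growing at least quadratically. Second, one must keep the straddler bookkeeping inside the induction under control: the same complexity bound from Lemma \ref{admissibility height of tree}, combined with the $1/2$ factors of the Tsirelson operation, should damp the straddler contributions geometrically rather than linearly in $n$. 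With these in place the final estimate is roughly $1$ (from the single expensive level) plus less than $2/3$ (unmatched indices) plus less than $2/3$ (non-matching pieces acting on matched indices) plus a geometric straddler error, which stays strictly below $3$.
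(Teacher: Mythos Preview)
Your plan contains a genuine gap: you never use the essential incomparability of $\{t_{f_\alpha}:\alpha\in\M^1_f\}$ beyond the harmless fact that it is preserved under restriction. That property is the entire engine of the paper's argument, and it replaces your induction with a two-line observation.

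Here is what you are missing. Refine the notion of ``match'' to: $l$ is \emph{effectively matched} if there exists $\alpha\in\M^1_f$ with $w(f_\alpha)=m_{j_l}$ \emph{and} $\supp f_\alpha\cap\Delta_l\neq\emptyset$. (For an $l$ that is only matched in your weaker sense, restricting $f$ to $\range x_l$ already kills every maximal node of weight $m_{j_l}$, so Proposition~\ref{special convex combination upper bound} still gives $|f(x_l)|<2m_{j_l}^{-1}$.) The crucial fact is that \emph{at most one index $l_0$ is effectively matched}. Indeed, if $l_1<l_2$ were both effectively matched via $\alpha_1,\alpha_2\in\M^1_f$, then since $((\rchi_{\Delta_1},m_{j_1}),\ldots,(\rchi_{\Delta_k},m_{j_k}))\in\tT_\xi$ we have $m_{j_{l_1}}<_{\W_\xi}m_{j_{l_2}}$, and by injectivity of $\sigma$ the unique predecessor of $t_{f_{\alpha_2}}$ with weight $m_{j_{l_1}}$ is $(\rchi_{\Delta_{l_1}},m_{j_{l_1}})$. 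Essential incomparability of $\{t_{f_\alpha}:\alpha\in\M^1_f\}$ then forces $\Delta_{l_1}<\supp f_{\alpha_1}$, contradicting $\supp f_{\alpha_1}\cap\Delta_{l_1}\neq\emptyset$. Once you know there is a single $l_0$, the estimate is immediate:
\[
|f(x_1+\cdots+x_k)|\le |f(x_{l_0})|+\sum_{l\neq l_0}|f(x_l)|\le \Big(1+\frac{1}{m_{j_{l_0}}}\Big)+\sum_{l\neq l_0}\frac{2}{m_{j_l}}\le 1+2\sum_{l=1}^k\frac{1}{m_{j_l}}\le 3,
\]
using Propositions~\ref{special convex combination seminormalized} and~\ref{special convex combination upper bound}.

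By contrast, your induction on $|\mathcal{A}|$ is both unnecessary and, as stated, does not close. The assertion that the matched contributions ``total at most $1$, because along any branch of $f$ the matched weights form a chain growing at least quadratically'' is not justified: maximal nodes are leaves, so there is only one per branch, and nothing in your argument prevents many distinct $l$'s from being effectively matched, each contributing a term of order $1$. The reassembly via Proposition~\ref{proposition tree analysis}(ii) produces a functional, but bounding its value on $\sum x_l$ by $1$ would require exactly the input you are trying to establish. The ``straddler bookkeeping'' is likewise a symptom of having missed the uniqueness of $l_0$: with that observation in hand, no induction, no straddlers, and no reassembly are needed.
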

\begin{proof}
	Let $f\in W$ with a tree analysis $(f_\alpha)_{\alpha\in \mathcal{A}}$. Observe that there exists at most one $1\le l_0\le k$ such that there is an $\alpha\in \M^1_f$ with $w(f_\alpha)=m_{j_{l_0}}$ and $\supp f_a\cap \Delta_{l_0}$ is non-empty. Indeed, suppose that there exist $1\le l_1<l_2\le k$ and $\alpha_1,\alpha_2\in \M^1_f$ with $w(f_{\alpha_1})=m_{j_{l_1}}$, $w(f_{\alpha_2})=m_{j_{l_2}}$, $\supp f_{\alpha_1}\cap \Delta_{l_1}\neq\emptyset$ and $\supp f_{\alpha_2}\cap \Delta_{l_2}\neq\emptyset$. Then since $\{t_{f_{\alpha}}:\alpha\in \M^1_f\}$ is essentially incomparable and $m_{j_{l_1}}<_{\W_\xi} m_{j_{l_2}}$ we have that $\Delta_{l_1}<\Delta_{t_{f_{\alpha_1}}}=\supp f_{\alpha_1}$ which is a contradiction.
	
	Therefore, for any $l\neq l_0$, we have that $w(f_\alpha)\neq m_{j_l}$ for every $\alpha\in\M^1_f$ and the previous proposition yields that $|f(x_l)|<2m^{-1}_{j_l}$. Moreover, Proposition \ref{special convex combination seminormalized} yields that $|f(x_{l_0})|\le 1+m^{-1}_{j_{l_0}}$ and hence we conclude that
	\[
	|f(x_1+\cdots+x_k)|\le 1+2\sum_{l=1}^k\frac{1}{m_{j_l}}\le 3.
	\]
\end{proof}

The previous proposition and the fact that the tree $T_\xi(M)$ is of height $\omega^\xi$ yield the following result.

\begin{thm}
	For every $M\in[\N]^\infty$, the space $T^\xi_{ess\-inc}$ contains a $c_0$-tree of height $\omega^\xi$, supported by $(e_j)_{j\in M}$. In particular, the space generated by $(e_j)_{j\in M}$ is not Asymptotic $\ell_1$.
\end{thm}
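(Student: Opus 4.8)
The plan is to read the required $c_0$-tree directly off the tree $T_\xi(M)$ built just above, and then to deduce the non-asymptotic-$\ell_1$ statement by playing the asymptotic game inside that tree. For the first assertion I would assign to each node $t=(x_1,\ldots,x_k)$ of $T_\xi(M)$ the vector $\hat x_t=x_k/\|x_k\|_W$. Proposition \ref{special convex combination seminormalized} gives $1\le\|x_k\|_W\le 1+m_{j_k}^{-1}\le 2$, so each $\hat x_t$ is a well-defined normalized block vector of $(e_j)_{j\in M}$ and $(\hat x_t)_{t\in T_\xi(M)}$ is indexed over a well-founded, infinite-branching tree of height $\omega^\xi$. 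The immediate successors of a non-maximal node are obtained by appending an $(n_{j_{k+1}},m_{j_{k+1}}^{-2})$-special convex combination whose support can be pushed arbitrarily far to the right — by the proposition on the existence of special convex combinations, together with the fact that $\mathcal{S}_{n_{j_{k+1}}}\cap[M]^{<\infty}$ has maximal elements with arbitrarily large minimum — so they form a normalized block sequence, with supports lying beyond $\supp x_k$ and tending to infinity, hence weakly null since $T^\xi_{ess\-inc}$ is reflexive. Finally, along a branch, say with underlying vectors $x_1,\ldots,x_d$, the supports $\Delta_1<\cdots<\Delta_d$ are successive, so for scalars $|a_l|\le 1$ the vector $\sum_{l=1}^d a_l(x_l/\|x_l\|_W)=\sum_{l=1}^d(a_l/\|x_l\|_W)x_l$ is dominated coordinatewise in absolute value by $\sum_{l=1}^d x_l$; by $1$-unconditionality and the previous proposition $\|\sum_{l=1}^d a_l(x_l/\|x_l\|_W)\|_W\le\|\sum_{l=1}^d x_l\|_W\le 3$, while $\|\sum_{l=1}^d a_l(x_l/\|x_l\|_W)\|_W\ge\max_l|a_l|$, again by $1$-unconditionality. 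Thus every branch is $3$-equivalent to the unit vector basis of $c_0$ and $(\hat x_t)_{t\in T_\xi(M)}$ is the desired $c_0$-tree of height $\omega^\xi$ supported by $(e_j)_{j\in M}$.

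For the ``in particular'' part I would argue by contradiction: suppose $Y_M=\cspn\{e_j:j\in M\}$ is $C$-Asymptotic $\ell_1$ and fix $n\in\N$ with $3/n<1/C$. Since $T^\xi_{ess\-inc}$, and hence $Y_M$, is reflexive (by an argument entirely parallel to Proposition \ref{section 1 reflexivity}), I would reduce, via \cite[Lemma 5.18]{AGLM} and at the cost of an arbitrarily small perturbation absorbed by enlarging $n$, to the situation where in the game $G(n,1,C)$ player $(S)$ plays tail subspaces $Y_k=\cspn\{e_j:j\in M,\ j>N_k\}$. Taking the role of player $(V)$, I would construct a branch $(x_1,\ldots,x_n)$ of $T_\xi(M)$ one turn at a time: fix $\min\supp x_1\ge n$ large enough that every $E\in\mathcal{S}_1$ with $\min E\ge\min\supp x_1$ lies in $\mathcal{S}_\xi$, and at turn $k$ let $x_k=m_{j_k}y_k$, where $y_k$ is an $(n_{j_k},m_{j_k}^{-2})$-special convex combination with $\supp y_k\subset M$, $\supp x_{k-1}<\supp y_k$ and $\min\supp y_k>N_k$; such a $y_k$ exists by the existence proposition for special convex combinations applied to a maximal $\mathcal{S}_{n_{j_k}}$-set inside $M$ with sufficiently large minimum, the weight $m_{j_k}=\sigma((\rchi_{\Delta_1},m_{j_1}),\ldots,(\rchi_{\Delta_{k-1}},m_{j_{k-1}}))$ and the index $n_{j_k}$ being forced by the tree structure, and since $k\le n\le\min\supp x_1$ the set $\{\min\supp x_1,\ldots,\min\supp x_k\}$ lies in $\mathcal{S}_1$, hence in $\mathcal{S}_\xi$ by the choice of $\min\supp x_1$, so $(x_1,\ldots,x_k)$ is genuinely a node of $T_\xi(M)$. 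Each $x_k/\|x_k\|_W$ is normalized and belongs to $Y_k$, so the play is legal and $(x_k/\|x_k\|_W)_{k=1}^n$ must be $C$-equivalent to the unit vector basis of $\ell_1^n$; but $(x_1,\ldots,x_n)$ is a branch of $T_\xi(M)$, so the previous proposition gives $\|\sum_{k=1}^n x_k\|_W\le 3$, whence (using $\|x_k\|_W\ge 1$, disjoint supports and $1$-unconditionality as before) $\|\tfrac1n\sum_{k=1}^n x_k/\|x_k\|_W\|_W\le 3/n<1/C$, a contradiction.

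The routine parts, namely the seminormalization of the tree vectors and the $c_0$ upper estimate, fall out at once from Proposition \ref{special convex combination seminormalized}, the previous proposition and $1$-unconditionality; the real work, and what I expect to be the main obstacle, is the bookkeeping in the second paragraph that keeps player $(V)$ inside $T_\xi(M)$ for all $n$ turns. The delicate point is that at each step the weight $m_{j_k}$ and the Schreier index $n_{j_k}$ are rigidly prescribed by $\sigma$, so one must verify that a special convex combination of exactly that weight and index can still be found with support past the moving threshold $N_k$ — this is where the existence proposition for special convex combinations and the abundance of maximal $\mathcal{S}_{n_{j_k}}$-sets inside $M$ enter — while simultaneously the set of bottom indices must be kept inside $\mathcal{S}_\xi$, which is precisely why $\min\supp x_1$ is chosen large at the very start so that this set stays in $\mathcal{S}_1$ on a tail where $\mathcal{S}_1\subset\mathcal{S}_\xi$ (here one uses $\xi\ge 1$). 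The reduction to tail subspaces and the attendant perturbation, as well as the weak nullity of the successor sequences in the tree, are standard consequences of reflexivity of $T^\xi_{ess\-inc}$.
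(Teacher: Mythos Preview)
Your proposal is correct and follows the paper's approach: the paper's proof is a one-line reference to the previous proposition and the height of $T_\xi(M)$, and you simply fill in the details of that reference (normalizing the branch vectors, extracting the $c_0$-equivalence from $1$-unconditionality and the bound $\le 3$, and playing the asymptotic game as in Proposition~\ref{NOT Asymptotic l1}). One small remark: you invoke reflexivity of $T^\xi_{ess\-inc}$, which in the paper is established only \emph{after} this theorem and by a different route (via $c_0$-saturation of the ground space, not an argument parallel to Proposition~\ref{section 1 reflexivity}); this is harmless for the statement as written, which only asks for a \emph{block} $c_0$-tree, so weak nullity and the tail-subspace reduction can be obtained directly from the shrinking/boundedly complete dichotomy or simply postponed.
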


\begin{rem}
	There exist modifications of the ground set $G$ that yield, for any $1<p<\infty$, a space, as in the previous theorem, that contains $\ell_p$-trees instead of $c_0$-trees.
\end{rem}

\begin{thm}
	The space $T^\xi_{ess\-inc}$ is reflexive.
\end{thm}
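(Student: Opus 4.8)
My plan is to follow the pattern of the proof of Proposition~\ref{section 1 reflexivity}. Since $T^\xi_{ess\-inc}$ admits a boundedly complete unconditional Schauder basis (as recorded in the Remark following its definition), it contains no copy of $c_0$ by \cite[Theorem~1.c.10]{LT}; hence, by \cite[Theorem~2]{J1}, it suffices to show that $T^\xi_{ess\-inc}$ does not contain $\ell_1$.

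I would argue by contradiction. If $\ell_1$ embeds, then James's $\ell_1$ distortion theorem \cite{J2} provides a normalized block sequence $(x_k)_k$ of $(e_j)_j$ with $\|\sum_k a_k x_k\|_W \ge \tfrac12 \sum_k |a_k|$ for every finitely supported scalar sequence. I will contradict this by exhibiting, for every sufficiently large $j$, a convex combination $z = \sum_{k\in F}\lambda_k x_k$ with $\|z\|_W \le 2/m_j < \tfrac12$. Fixing $j\ge 2$, the repeated-averages Proposition preceding Lemma~\ref{admissibility height of tree} lets me pick a large $k_0$ and then a finite set $F\subset\{k>k_0\}$ with scalars $\lambda_k\ge 0$, $\sum_{k\in F}\lambda_k=1$, such that $\sum_{k\in F}\lambda_k e_{\min\supp x_k}$ is an $(n_j,m_j^{-2})$-special convex combination; I then set $z=\sum_{k\in F}\lambda_k x_k$.

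The crux is the estimate $|f(z)| \le 2/m_j$ for every $f\in W$, which yields $\|z\|_W\le 2/m_j$ and the desired contradiction. To prove it I would use the ``basic inequality'' technique (see \cite[Chapter~2]{AT}): given $f\in W$ with a tree analysis $(f_\alpha)_{\alpha\in\mathcal{A}}$, after discarding the boundedly many $x_k$ whose support meets the supports of two distinct pieces of the natural decomposition of $f$ (handled exactly as in the proof of Proposition~\ref{section 1 reflexivity}), one bounds $|f(z)|$ by the value of an auxiliary functional — respecting the same $(\mathcal{S}_1,1/2)$ and weighted-$(\mathcal{S}_{n_j},1/m_j)$ structure as $W$, together with the essential-incomparability constraint — evaluated on the special convex combination $\sum_{k\in F}\lambda_k e_{\min\supp x_k}$ of basis vectors. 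At this point the estimates already proved for basis-vector special convex combinations apply: arguing as in Propositions~\ref{special convex combination seminormalized} and~\ref{special convex combination upper bound} and in the Proposition bounding the norm of nodes of $T_\xi(M)$, one splits the auxiliary functional according to whether its pieces have weight $\ge m_j$ or $<m_j$, uses essential incomparability and the spreading property of the Schreier families to see that only a $<m_j^{-2}$ fraction of the $\lambda_k$-mass is reached with weight $\ge m_j$, and collects the weight-$<m_j$ (and $G_0$) pieces into a single admissible functional; summing the two contributions gives $|f(z)|\le 2/m_j$.

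The main obstacle, as in any such construction, is the block-vector ``basic inequality'': transferring the special-convex-combination norm estimates of Propositions~\ref{special convex combination seminormalized}--\ref{special convex combination upper bound} from the basis to arbitrary normalized block vectors, which requires carefully tracking how a single functional of $W$ — its $(\mathcal{S}_1,1/2)$-levels, its weighted levels, and its maximal-node functionals — interacts with a long special convex combination of block vectors, including the boundedly many block vectors straddling several nodes of the tree analysis. This is of the same nature as, though more delicate than, the corresponding step in the proof of Proposition~\ref{section 1 reflexivity}; the remainder of the argument is routine bookkeeping with Schreier families and tree analyses.
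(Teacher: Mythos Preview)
Your approach is genuinely different from the paper's and substantially heavier. The paper's argument is very short: it observes that the completion of $c_{00}(\N)$ under $\|\cdot\|_G$ embeds into $C(\T_\xi)$ (a $C(K)$-space for a countable compact $K$), hence is $c_0$-saturated; since $T^\xi_{ess\-inc}$ has a boundedly complete basis it contains no $c_0$, so the formal identity $(c_{00},\|\cdot\|_W)\to(c_{00},\|\cdot\|_G)$ is strictly singular. Thus every normalized block sequence has a subsequence with $\|x_j\|_G\to 0$, and the Figiel--Johnson argument in the \emph{last paragraph} of Proposition~\ref{section 1 reflexivity} applies verbatim. No special convex combinations, no basic inequality.

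Your route instead tries to transfer the basis-vector estimates of Propositions~\ref{special convex combination seminormalized}--\ref{special convex combination upper bound} to arbitrary normalized block vectors via a basic inequality. This is not the ``corresponding step in the proof of Proposition~\ref{section 1 reflexivity}'' as you suggest: that proof contains no basic inequality at all---the ground set $G_2^\xi$ there admits the one-line Cauchy--Schwarz estimate $\|\sum_j a_j x_j\|_{G_2^\xi}\le(\sum_j a_j^2)^{1/2}$, which already gives $\|x_j\|_{G_2^\xi}\to 0$ along a block subsequence. Here the ground set $G=G_0\cup G_1$ is much more complex: a single $G_1$-functional of weight $m_{j'}$ has support in $\S_{n_{j'}}$, which for $j'>j$ can be far larger than $\S_{n_j}$ and hence can meet the supports of \emph{all} the block vectors $x_k$ in your combination. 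Your sketch (``only a $<m_j^{-2}$ fraction of the $\lambda_k$-mass is reached with weight $\ge m_j$'') treats block vectors as if they were basis vectors at position $\min\supp x_k$, but justifying this replacement is precisely the content of the basic inequality you have not proved. A correct basic inequality in this setting would need to handle the interaction between the $(\S_1,1/2)$-layers, the weighted ground functionals of unbounded Schreier complexity, and the essential-incomparability constraint---this is real work, not bookkeeping. The paper bypasses all of it with the soft $c_0$-saturation observation.
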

\begin{proof}
Note that since $\T_\xi$ is a countable compact space with respect to the pointwise convergence topology, the completion of $c_{00}(\N)$ with respect to $\|\cdot\|_G$ is embedded in $C[\T_\xi]$, i.e., the space of all continuous real functions on $\T_\xi$, and hence is $c_0$-saturated. Furthermore, $T^\xi_{ess\-inc}$ admits a boundedly complete basis and therefore does not contain $c_0$. The above imply that the identity operator $Id: (c_{00}(\N),\|\cdot\|_W)\to (c_{00}(\N),\|\cdot\|_G)$ is strictly singular and hence for any normalized block sequence $(x_j)_j$ in $T^\xi_{ess\-inc}$ there exists a subsequence $(x_j)_{j\in M}$ such that $\lim_{j\in M}\|x_j\|_G=0$. The remainder of the proof is identical to the last paragraph of Proposition \ref{section 1 reflexivity}.
\end{proof}


\section{More non-asymptotic $\ell_p$ spaces with uniformly unique $\ell_p$ joint spreading models}
In this final section we show that, for every  $1<p<\infty$, there is a reflexive Banach space that admits a uniformly unique $\ell_p$ asymptotic model whereas it is not Asymptotic $\ell_p$. This was also observed in \cite[Section 7.2]{BLMS} for a slightly different type of spaces. We show this for a class of spaces very similar to those defined in \cite[Example 4.3]{OS3}.

\begin{dfn}
Let $1<p<\infty$ and denote its conjugate by $q$, i.e., $p^{-1}+q^{-1}=1$. Fix a countable ordinal $\xi$ and define the following norming sets on $c_{00}(\N)$.
\[
G_1^{\xi}=\Big\{\sum_{i\in S}\epsilon_ie^*_i:S\text{ is a segment of }\T_\xi\text{ and }\epsilon_i=\pm1 \Big\}
\]
\begin{align*}
G_{1,p}^{\xi}=\Big\{\sum_{i=1}^mb_if_{i}:m\in\N,\; \sum_{i=1}^m|b_i|^q\le1,&\; f_i\in G^{\xi}_1\text{ for }i=1,\ldots,m\text{ and} \\\supp f_1,&\ldots,\supp f_m\text{ are pairwise disjoint} \Big\}.
	\end{align*}
Denote by $JT^\xi_{1,p}$ the completion of $c_{00}(\N)$ with respect to the norm induced by the norming set ${G^{\xi}_{1,p}}$.
\end{dfn}

We start with some necessary remarks on the above norming sets and a Ramsey type result.

\begin{rem}\label{remark comb lem pointwise limit}
	Let $(f_j)_j$ be a sequence in $G_{1}^{\xi}$ with $f_j=\sum_{i \in S_j}\epsilon^j_ie^*_i$, $j\in\N$, and for each $i,j\in\N$, define $\epsilon_j(i)=\epsilon^i_j$ if $i\in S_j$ and $\epsilon_j(i)=0$ otherwise. Passing to a subsequence, we may assume that $(S_j)_j$ converges pointwise to a segment $S$, since $\T_\xi$ is well-founded, and that $(\epsilon_j)_j$ also converges to some  $\epsilon$ in $\{-1,1\}^\N$. Then, clearly, $(f_j)_j$ converges pointwise to $f=\sum_{i\in S}\epsilon(i)e^*_i$ and $f$ is in $G^\xi_1$.
\end{rem}

\begin{rem}\label{remark inequality norm <= 1}
	Let $x$ be a normalized vector in $JT^\xi_{1,p}$ with finite support.
	\begin{itemize}
		\item[(i)] If for some $\varepsilon>0$ there is a family  $\{f_i\}_{i\in I}$ in $G_{1}^{\xi}$ whose members have pairwise disjoint supports and $|f_{i}(x)|\ge\varepsilon$ for all $i\in I$, then  $\#I\le\varepsilon^{-p}$.
		\item[(ii)] Let $f_1,\ldots,f_m\in G_1^{\xi}$ have pairwise disjoint supports and $\supp f_i \subset\range (x)$  for $i=1,\ldots,m$. Then, for any choice of scalars $b_1,\ldots,b_m$, we have that
		\[
		\Big|\sum_{i=1}^mb_if_{i}(x)\Big|^q\le\sum_{i=1}^m|b_i|^q.
		\]
	\end{itemize}
\end{rem}

\begin{dfn}
	We call a family $(F_j)_j$ of finite subsets of $JT^\xi_{1,p}$ a normalized block family if for any choice of $x_j\in F_j$, $j\in\N$, the sequence $(x_j)_j$ is block and $\|x\|=1$ for any $x\in F_j$ and $j\in\N$. Moreover, for such a family, define $M(F_j)=\max\{\supp x:x\in F_j\}$ and $r(F_j)=\#(M(F_{j-1}),M(F_j)]$, where $M(F_0)=0$.
\end{dfn}

\begin{lem}\label{comb lem ell2 subsequence}
	Let $(F_j)_j$ be a normalized block family in $JT^\xi_{1,p}$ with $\sup_j\# F_j<\infty$. Then, for every $\varepsilon>0$ and $n_0\in \N$, there is an $L\in[\N]^\infty$ such that, for any segment $S$ of $\T_\xi$ with $\min S\le n_0$ and any $f\in G_1^{\xi}$ with $\supp f=S$, there is at most one $j\in L$ with the property that $|f(x)|\ge\varepsilon$ for some $x\in F_j$.
\end{lem}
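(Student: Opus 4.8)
We argue by contradiction. If no $L$ as in the statement exists, then, since this negation is inherited by every infinite subset of $\N$, we may iterate it to obtain a strictly increasing sequence $j^{(1)}_1<j^{(1)}_2<j^{(2)}_1<j^{(2)}_2<\cdots$ in $\N$, segments $S^{(m)}$ of $\T_\xi$ with $\min S^{(m)}\le n_0$, functionals $f^{(m)}\in G_1^\xi$ with $\supp f^{(m)}=S^{(m)}$, and vectors $x^{(m)}_i\in F_{j^{(m)}_i}$ with $|f^{(m)}(x^{(m)}_i)|\ge\varepsilon$ for $i=1,2$. As $\T_\xi$ has only finitely many nodes that are $\le n_0$, after passing to a subsequence we may assume $S^{(m)}=[a,b^{(m)}]$ for a fixed node $a$ and $b^{(m)}\in V_a$; call such data a \emph{bad $a$-chain}. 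Writing $R_j=(M(F_{j-1}),M(F_j)]$ we have $\supp x^{(m)}_i\subseteq R_{j^{(m)}_i}$, so the witnesses' supports tend to infinity; hence a fixed finitely supported functional is eventually zero on the witnesses, a fact used repeatedly below. It thus suffices to prove that \emph{for every node $a$ there is no bad $a$-chain}, and this we do by transfinite induction on $\mathrm{rank}(V_a)$.

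If $\mathrm{rank}(V_a)=0$ then $V_a=\{a\}$, the only eligible functional is $\pm e^*_a$, and $|e^*_a(x)|=|x_a|\ge\varepsilon$ for some $x\in F_j$ forces $a\in R_j$, i.e.\ it holds for at most one $j$; so no bad $a$-chain exists. For the inductive step, assume the claim for all nodes of smaller rank and suppose a bad $a$-chain is given. Since $V_a$ is well founded, an analysis of the segments $([a,b^{(m)}])_m$ in the spirit of Remark~\ref{remark comb lem pointwise limit} shows that, after passing to a subsequence, \emph{either} the $b^{(m)}$ are all equal to a fixed $b$ --- whence, after one more subsequence, $f^{(m)}$ is a fixed functional supported on the finite set $[a,b]$, contradicting $|f^{(m)}(x^{(m)}_1)|\ge\varepsilon$ for all large $m$ --- \emph{or} there are a node $c\in V_a$ and pairwise distinct immediate successors $c^{(m)}$ of $c$ with $[a,b^{(m)}]=[a,c]\sqcup[c^{(m)},b^{(m)}]$, $[c^{(m)},b^{(m)}]\subseteq V_{c^{(m)}}$, and the $V_{c^{(m)}}$ pairwise disjoint. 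In the latter case, after a further subsequence $f^{(m)}|_{[a,c]}$ is a fixed functional $g$ with finite support $[a,c]$, so the restrictions $h^{(m)}:=f^{(m)}|_{[c^{(m)},b^{(m)}]}\in G_1^\xi$ satisfy $|h^{(m)}(x^{(m)}_i)|\ge\varepsilon$ for $i=1,2$ and all large $m$ (as $g$ kills the witnesses). We have thus produced a bad configuration whose functionals $h^{(m)}$ are supported in the pairwise disjoint subtrees $V_{c^{(m)}}$, with $\mathrm{rank}(V_{c^{(m)}})<\mathrm{rank}(V_c)\le\mathrm{rank}(V_a)$.

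When $c\succ a$ strictly, one has $\mathrm{rank}(V_{c^{(m)}})<\mathrm{rank}(V_a)$, and the inductive hypothesis yields a contradiction --- either after retaining a single child subtree, or, more cleanly, after formulating the induction so that it also applies to configurations whose starting nodes form an antichain (observe that such a configuration over $\{c^{(m)}\}$ is turned into a bad $c$-chain by prepending $\pm e^*_c$, $c$ being the common predecessor of the $c^{(m)}$, and $\mathrm{rank}(V_c)<\mathrm{rank}(V_a)$). The genuinely delicate case is $c=a$, which occurs exactly when $\mathrm{rank}(V_a)$ is a limit ordinal and the $b^{(m)}$ scatter immediately among the immediate successors of $a$: then $\mathrm{rank}(V_c)=\mathrm{rank}(V_a)$, the descent merely passes from $a$ to its children, and the ranks $\mathrm{rank}(V_{c^{(m)}})$ may increase to $\mathrm{rank}(V_a)$, so a naive rank induction does not close.

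Resolving this last case is the main obstacle, and it is where the specific combinatorial structure of the Schreier families $\S_\xi$ defining $\T_\xi$ must enter: a branch of $\T_\xi$ whose least node is $\le n_0$ is a concatenation of at most $n_0$ branches of trees of \emph{strictly smaller} Schreier index, so one should run a combined induction that also decreases $\xi$ (using this $\le n_0$-piece decomposition of the segments $S^{(m)}$ to reduce the limit case to instances of smaller index), and one uses the finiteness hypothesis $\sup_j\#F_j<\infty$ together with Remark~\ref{remark inequality norm <= 1}(i) --- which bounds how many disjointly supported elements of $G_1^\xi$ can be simultaneously $\ge\varepsilon$ on a single normalized vector --- to control the pieces. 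I expect this limit-ordinal/Schreier-index bookkeeping, and the correct formulation of the inductive statement so that it covers antichains of starting nodes, to be the principal technical burden; the input from the geometry of $JT^\xi_{1,p}$ itself is light, entering only through Remark~\ref{remark inequality norm <= 1} and the fact that restrictions of elements of $G_1^\xi$ to sub-segments remain in $G_1^\xi$.
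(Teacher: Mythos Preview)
Your proposal is not a complete proof: you explicitly leave the case $c=a$ (the limit-ordinal case of your rank induction) unresolved, calling it ``the principal technical burden,'' and your suggested fix via a ``combined induction that also decreases $\xi$'' is only a speculation. The decomposition you invoke --- that a branch of $\T_\xi$ rooted below $n_0$ splits into at most $n_0$ branches of trees of strictly smaller Schreier index --- is essentially correct for successor $\xi$ (by the very definition of $\S_{\alpha+1}$), but for limit $\xi$ the definition of $\S_\xi$ via the fixed ladder $(\alpha(\xi,j))_j$ does not give such a uniform concatenation bound depending only on $n_0$, and in any case you have not formulated the strengthened inductive hypothesis (handling antichains of starting nodes, carrying along the Schreier parameter) precisely enough to see whether it closes. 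As it stands the argument genuinely does not terminate.

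The paper's proof avoids the transfinite-rank induction entirely and is substantially shorter. It proceeds by a double application of Ramsey's theorem. First, if the conclusion fails, colour pairs $i<j$ according to whether some segment $S_{ij}$ with $\min S_{ij}\le n_0$ witnesses failure for both $F_i$ and $F_j$; failure on every infinite subset forces a monochromatic set on which \emph{every} pair is witnessed (this is much stronger than your sequence of disjoint witnessed pairs). The finiteness of $\sup_j\#F_j$ then lets one fix witness sequences $(x_j)$, $(y_j)$. Second, for triples $i<j<k$ one colours according to whether $S_{ik}\cap S_{jk}\cap\range(y_k)$ is empty; emptiness is ruled out by Remark~\ref{remark inequality norm <= 1}(i) (too many disjointly supported functionals would be $\ge\varepsilon$ on the single unit vector $y_k$), and non-emptiness forces the segments $S_{ij}$ to be compatible enough that, in the limit, Remark~\ref{remark comb lem pointwise limit} produces a single $f\in G_1^\xi$ with finite support and $|f(x_j)|\ge\varepsilon$ for all $j$, a contradiction. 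The whole limit-ordinal difficulty you encountered is absorbed by the Ramsey step, with no rank bookkeeping needed.
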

\begin{proof}
	For a segment $S$ of $\T_\xi$, let $G_S$ denote the set of all $f\in G_1^{\xi}$ with $\supp f=S$. If the conclusion is false for some $\varepsilon>0$ and $n_0\in\N$, then using Ramsey Theorem from \cite{Ra}, there exists an $L\in[\N]^\infty$ such that, for any $i<j$ in $L$, there is a segment $S_{ij}$ with $\min S_{ij}\le n_0$, a functional $f_{ij}\in G_{S_{ij}}$ and $x_{ij}\in F_i$, $y_{ij}\in F_j$ such that $|f_{{ij}}(x_{ij})|\ge\varepsilon$ and $|f_{{ij}}(y_{ij})|\ge\varepsilon$. Assume for convenience that $L=\N$. Since $\#\sup_j F_j<\infty$, using the pigeon hole principle and a diagonal argument we may assume that there exist sequences $(x_j)_j$, $(y_j)_j$ such that $x_j,y_j\in F_j$ and, for every $i<j\in\N$, a segment $S_{ij}$ of $\T_\xi$ with $\min S_{ij}\le n_0$ and an $f_{ij}\in G_{S_{ij}}$ such that $|f_{{ij}}(x_i)|\ge\varepsilon$ and $|f_{{ij}}(y_j)|\ge\varepsilon$.
	
	For each $i<j<k$ in $\N$, define $S_{ijk}= S_{ik}\cap S_{jk}\cap \range(y_k)$. Once more, using Ramsey theorem and passing to a further infinite subset, we may assume that $S_{ijk}$ is either always empty or always non-empty for every $i<j<k$ in $\N$. Item (i) of Remark \ref{remark inequality norm <= 1} and the fact that $\|y_k\|=1$ for all $k\in\N$ rules out the first case and hence $S_{ijk}\neq\emptyset$ for all $i<j<k$ in $\N$. This in particular implies that if we fix $i<j_1<k$ and $i<j_2<k$, then $S_{ij_1k}|_{[n_0,m(x_k))}=S_{ij_2k}|_{[n_0,m(x_k))}$. For any $j\in\N$, take an arbitrary $i$ with $1<i<j$ and set $S_{j}=S_{1ij}|_{[n_0,m(x_j))}$. Then we conclude that, for any $j\in \N$, there is an $f_j\in G_{S_j}$ such that $|f_j(x_i)|\ge\varepsilon$ for all $i<j$, where $\min S_j\le n_0$. This is a contradiction, since Remark \ref{remark comb lem pointwise limit} implies that there exists an $f\in G^{\xi}_1$ with the property that $|f(x_j)|\ge\varepsilon$ for all $j\in\N$, whereas $\supp f$ is finite since $\T_\xi$ is well-founded.
\end{proof}

\begin{lem}\label{no2 comb lem ell2 subsequence}
	Let $\varepsilon>0$ and $(F_j)_j$ be a normalized block family in $JT^\xi_{1,p}$ with $\sup_j\# F_j<\infty$. Then there exists a strictly increasing sequence $(n_j)_j$ of naturals and a decreasing sequence $(\varepsilon_j)_j$ of positive reals such that
	\begin{enumerate}
		\item[(i)] for every $j\in \N$, every segment $S$ of $\T_\xi$ with $\min S\le M(F_{n_j})$ and $f\in G^{\xi}_1$ with $\supp f=S$, there exists at most one $j'>j$ such that $|f(x)|\ge\varepsilon_j$ for some $x\in F_{n_{j'}}$ and
		\item[(ii)] $\sum_{j=1}^\infty r(F_{n_j})\sum_{i=j}^\infty(i+1)\varepsilon_i<\varepsilon$.
	\end{enumerate}
\end{lem}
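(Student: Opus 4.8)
The statement is essentially a diagonalization combining the single-functional dichotomy of Lemma \ref{comb lem ell2 subsequence} with a fast-decay requirement (ii) on the weights $r(F_{n_j})$. The plan is to construct the subsequence $(n_j)$ and the positive reals $(\varepsilon_j)$ simultaneously by an inductive ``nested'' procedure: at stage $j$ we will have already fixed $n_1<\cdots<n_j$ and $\varepsilon_1>\cdots>\varepsilon_j$, and we then choose $\varepsilon_{j+1}$ small enough to handle the tail of the series in (ii) and $n_{j+1}$ large enough so that Lemma \ref{comb lem ell2 subsequence}, applied to the \emph{tail} block family $(F_m)_{m>n_j}$ with the finite threshold parameter taken to be $n_0 = M(F_{n_j})$ and accuracy $\varepsilon = \varepsilon_j$, picks out an infinite set inside which (i) holds. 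Passing to the intersection of all the infinite sets produced this way gives a single subsequence witnessing (i) for every $j$ at once; this is the standard diagonal argument, so I would just indicate it rather than belabor it.

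Concretely, first I would set $\varepsilon_0 = \varepsilon$ (an auxiliary value, not part of the conclusion) and observe that it suffices to arrange, for each $j$, two things: (a) condition (i) at level $j$, which is exactly an instance of Lemma \ref{comb lem ell2 subsequence} applied to the normalized block family $(F_m)_{m \geq n_j+1}$ with $n_0 := M(F_{n_j})$ and $\varepsilon := \varepsilon_j$, yielding an infinite set $L_j$ such that for every segment $S$ with $\min S \leq M(F_{n_j})$ and $f \in G^\xi_1$ with $\supp f = S$ there is at most one index in $L_j$ realizing $|f(x)| \geq \varepsilon_j$ for some $x$ in the corresponding $F$; and (b) a smallness bound on $\varepsilon_{j+1}$. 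For (b), since the quantities $r(F_m)$ depend only on the fixed block family and not on our choices, I would choose the $\varepsilon_i$'s decreasing to zero fast enough that
\[
\sum_{i=1}^\infty r(F_{n_i}) \sum_{k=i}^\infty (k+1)\varepsilon_k < \varepsilon;
\]
for instance, after the $n_i$ are fixed one may pick $\varepsilon_i \leq 2^{-i}\big(\, (i+1)\, i\, \max_{m\le n_i} r(F_m)\,\big)^{-1}$, or more simply interleave the choices so that at stage $j$ the already-committed partial sum plus a crude bound on the not-yet-chosen tail stays below $\varepsilon$. Since $\sum_{i \geq j}(i+1)\varepsilon_i$ can be made as small as we like by shrinking the $\varepsilon_i$'s, and only finitely many $r(F_{n_i})$ enter any partial sum, this is routine.

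The one genuine subtlety — and the step I expect to require the most care — is the order of quantifiers in the simultaneous construction: condition (i) at level $j$ refers to segments with $\min S \leq M(F_{n_j})$, so the infinite set $L_j$ extracted from Lemma \ref{comb lem ell2 subsequence} depends on $n_j$, which in turn must be chosen before we know $L_j$. The resolution is to build $n_{j+1}$ as an element of $L_1 \cap \cdots \cap L_j$ (a nonempty, indeed infinite, intersection), so that the ``at most one'' conclusion of each earlier application of Lemma \ref{comb lem ell2 subsequence} is automatically inherited by the final subsequence; one must check that when we later pass to a sub-subsequence to also secure levels $j+1, j+2, \ldots$, property (i) at level $j$ is not destroyed — but it is not, because ``at most one index with a given property'' is preserved under passing to subsets. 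I would therefore organize the induction so that at stage $j$ I choose $\varepsilon_{j+1}$ (small, per (b)), then $n_{j+1} \in L_1 \cap \cdots \cap L_j$ with $n_{j+1} > n_j$, then apply Lemma \ref{comb lem ell2 subsequence} to $(F_m)_{m > n_{j+1}}$ with parameters $M(F_{n_{j+1}})$ and $\varepsilon_{j+1}$ to obtain $L_{j+1}$, and continue. Taking the diagonal sequence $(n_j)$ then satisfies (i) for all $j$, and the preassigned $(\varepsilon_j)$ satisfies (ii) by construction.
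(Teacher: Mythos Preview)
Your approach is essentially the paper's: an inductive diagonalization that repeatedly invokes Lemma~\ref{comb lem ell2 subsequence} with threshold $M(F_{n_j})$ and accuracy $\varepsilon_j$, inside a nested sequence of infinite index sets, while simultaneously shrinking the $\varepsilon_j$'s fast enough to force (ii). Two small points of execution need tightening. First, as written your sets $L_j$ are obtained from the \emph{full} tail $(F_m)_{m>n_j}$ rather than from the previously extracted subfamily, so there is no reason for $L_1\cap\cdots\cap L_j$ to be infinite; you should apply Lemma~\ref{comb lem ell2 subsequence} at each stage to the family indexed by the current infinite set (equivalently, take $L_{j+1}\in[L_j]^\infty$), exactly as the paper does. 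Second, your induction chooses $\varepsilon_{j+1}$ \emph{before} $n_{j+1}$, but the contribution of $\varepsilon_{j+1}$ to the double sum in (ii) carries the factor $r(F_{n_{j+1}})$, which is not yet known; the fix is to select $n_{j+1}$ first (this is harmless, since in your scheme $n_{j+1}$ does not depend on $\varepsilon_{j+1}$) and only then choose $\varepsilon_{j+1}$ small---this is precisely the order the paper uses, imposing $r(F_{n_{j+1}})(j+2)\varepsilon_{j+1}<\delta_{j+1}$ and $r(F_{n_k})\sum_{i=k}^{j+1}(i+1)\varepsilon_i<\delta_k$ for $k\le j$, where $\sum_j\delta_j<\varepsilon$.
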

\begin{proof}
	Let $(\delta_j)_j$ be a sequence of positive reals such that $\sum_{j=1}^\infty\delta_j<\varepsilon$. We will construct $(n_j)_j$ and $(\varepsilon_j)_j$ by induction, along with a decreasing sequence $(L_j)_j$ of infinite subsets of $\N$. Set $n_1=1$ and $L_1=\N$ and choose $\varepsilon_1>0$ such that $2r(F_1)\varepsilon_1<\delta_1$. Suppose that $n_1,\ldots,n_j$, $\varepsilon_1,\ldots,\varepsilon_j$ and $L_1,\ldots,L_j$ have been chosen for some $j$ in $\N$. Then, the previous lemma yields an $L_{j+1}\in[L_{j}]^\infty$ such that, for every segment $S$ of $\T_\xi$ with $\min S\le M(F_{n_j})$ and every $f\in G_1^{\xi}$ with $\supp f=S$, there is at most one $j'>j$ such that $|f(x)|\ge\varepsilon_{j}$ for some $x\in F_{n_{j'}}$. Choose $n_{j+1}\in L_{j+1}$ with $n_{j+1}>n_j$  and $\varepsilon_{j+1}<\varepsilon_j$ such that
	\begin{enumerate}
		\item[(a)]	$r(F_{n_{j+1}})(j+2)\varepsilon_{j+1}<\delta_{j+1}$ and
		\item[(b)]  $r(F_{n_{k}})\sum_{i=k}^{j+1}(i+1)\varepsilon_i<\delta_k$ for all $k\le j$.
	\end{enumerate}
	It follows quite easily that $(n_j)_j$ and $(\varepsilon_j)_j$ are as desired.
\end{proof}

\begin{prop}\label{upper ell2 for ground set}
	Let $\varepsilon>0$ and $(F_j)_j$ be a normalized block family in $JT^\xi_{1,p}$ with $\sup_j\# F_j<\infty$ satisfying the following.
	\begin{enumerate}
		\item[(i)] For every $j\in \N$, every segment $S$ of $\T_\xi$ with $\min S\le M(F_{n})$ and $f\in G^{\xi}_{1}$ with $\supp f =S$, there exists at most one $j'>j$ such that $|f(x)|\ge\varepsilon_j$ for some $x\in F_{j'}$ and
		\item[(ii)] $\sum_{j=1}^\infty r(F_j)\sum_{i=j}^\infty(i+1)\varepsilon_i<\varepsilon$.
	\end{enumerate}
	Then, for every $n\in\N$, every choice of $x_1,\ldots,x_n$ with $x_j\in F_{j}$ and scalars $a_1,\ldots,a_n$, we have that
	\[
	\Big(\sum_{j=1}^n|a_j|^p\Big)^{\frac{1}{p}}\le\Big\|\sum_{j=1}^na_jx_j\Big\|\le (2^{\frac{1}{q}}+\varepsilon)\Big(\sum_{j=1}^n|a_j|^p\Big)^{\frac{1}{p}}.
	\]
\end{prop}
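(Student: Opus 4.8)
The plan is to establish the two inequalities separately, exploiting the two directions built into the definition of the norming set $G^\xi_{1,p}$. The lower bound $(\sum_{j=1}^n|a_j|^p)^{1/p}\le\|\sum_{j=1}^na_jx_j\|$ is the easy half: for each $j=1,\ldots,n$, since $\|x_j\|=1$ there is a functional $g_j\in G^\xi_{1,p}$ with $g_j(x_j)\ge 1-\delta$ (for any prescribed $\delta$), and we may, by restricting each $g_j$ to $\range(x_j)$ and using item (iii) of the norming set definition (closure under restriction to intervals), assume $\supp g_j\subset\range(x_j)$; in particular the $g_j$ have pairwise disjoint supports. Writing $g_j=\sum_i b^j_i f^j_i$ in the canonical form, one assembles $g=\sum_{j=1}^n \mathrm{sgn}(a_j)|a_j|^{p-1}(\sum_j|a_j|^p)^{-1/q}g_j$ and checks, using $\sum_j(|a_j|^{p-1}(\sum|a_j|^p)^{-1/q})^q=1$ together with the disjointness of supports across different $j$, that $g\in G^\xi_{1,p}$. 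Then $g(\sum_j a_jx_j)\ge(1-\delta)(\sum_j|a_j|^p)^{1/p}$, and since $\delta$ was arbitrary we could instead take $g_j(x_j)=\|x_j\|_{W}$ exactly using compactness of the norming set, or simply absorb the $\delta$; this yields the lower estimate. (Here one should be slightly careful: the norming functional for $\|x_j\|$ may be attained, as $G^\xi_{1,p}$ can be taken pointwise closed, so no $\delta$ is needed.)

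For the upper bound, fix $f\in G^\xi_{1,p}$, say $f=\sum_{k=1}^m b_k h_k$ with $\sum_k|b_k|^q\le 1$ and $h_k\in G^\xi_1$ having pairwise disjoint supports, and estimate $f(\sum_{j=1}^n a_jx_j)=\sum_{j=1}^n a_j\sum_{k=1}^m b_k h_k(x_j)$. The idea is to split, for each fixed $j$, the set of indices $k$ into those $h_k$ that are ``large on $x_j$'' and those that are ``small on $x_j$''. For the small part, item (i) of Remark \ref{remark inequality norm <= 1} controls how many $h_k$ can be simultaneously large on a given $x_j$, and the smallness parameters $\varepsilon_i$ with the summability condition (ii) of the hypothesis let us bound the total contribution of all the ``small'' interactions by $\varepsilon$. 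For the large part, the key combinatorial input is hypothesis (i): for a segment $S$ with $\min S\le M(F_n)$, at most one later block $F_{j'}$ carries a vector on which the corresponding functional is $\ge\varepsilon_j$-large; applied with the $h_k$'s (which are $G^\xi_1$-functionals, i.e.\ supported on single segments) this forces that, after discarding the bounded ``small'' error, each $h_k$ interacts substantially with essentially only one $x_j$. Thus, up to error $\varepsilon$, the functional $f$ behaves like a functional in $G^\xi_{1,p}$ whose constituent $h_k$'s are grouped by which single $x_j$ they hit; applying item (ii) of Remark \ref{remark inequality norm <= 1} to each group and then Hölder across groups produces a bound of the shape $\|x_j\|$-weighted $\ell_q$–$\ell_p$ duality. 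The extra factor $2^{1/q}$ appears because a given $h_k$ may, before we clean it up, straddle two consecutive blocks (the ``boundary'' effect also seen in the reflexivity proof of the Tsirelson space), so one pays a factor of $2$ inside the $\ell_q$ sum of the $|b_k|$'s, i.e.\ $2^{1/q}$ after taking the $q$-th root.

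Concretely, I would proceed in the following order. (1) Record the trivial lower bound as above. (2) Fix $f=\sum_k b_k h_k\in G^\xi_{1,p}$ and, for each $k$, let $S_k=\supp h_k$; partition the $S_k$'s according to the least index $j_k$ with $\min S_k\le M(F_{j_k})$ (equivalently, locate where each segment ``starts'' relative to the blocks). (3) For each block $F_j$, decompose $h_k(x_j)$ into a ``$\ge\varepsilon_{j}$'' part and a ``$<\varepsilon_{j}$'' part; sum the small parts over all $j,k$ and bound the total by $\sum_j r(F_j)\sum_{i\ge j}(i+1)\varepsilon_i<\varepsilon$, using that at most $(i+1)$-many (really $\varepsilon_i^{-p}$, but the bookkeeping is arranged so $(i+1)$ suffices) large functionals hit a fixed vector and that there are $r(F_j)$ coordinates in the relevant window. (4) For the large parts, invoke hypothesis (i) to see that each $h_k$ (after restriction) contributes to at most one — or, counting the boundary straddle, at most two — of the $x_j$; regroup, apply Remark \ref{remark inequality norm <= 1}(ii) within each group to get $|\sum_{k\in I_j}b_k h_k(x_j)|^q\le 2\sum_{k\in I_j}|b_k|^q$, and then Hölder: $|\sum_j a_j(\sum_{k\in I_j}b_kh_k(x_j))|\le(\sum_j|a_j|^p)^{1/p}(\sum_j 2\sum_{k\in I_j}|b_k|^q)^{1/q}\le 2^{1/q}(\sum_j|a_j|^p)^{1/p}$. (5) Combine steps (3) and (4) to get $f(\sum_j a_jx_j)\le(2^{1/q}+\varepsilon)(\sum_j|a_j|^p)^{1/p}$, and take the supremum over $f\in G^\xi_{1,p}$.

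The main obstacle will be step (4): making precise the claim that, after peeling off the $\varepsilon$-small interactions, each $G^\xi_1$-functional $h_k$ appearing in $f$ can be ``charged'' to at most one block $F_j$ (plus one boundary exception). This is exactly where hypothesis (i) of the proposition — at most one later block carries an $\varepsilon_j$-large vector for a given segment — does the work, but one must handle carefully (a) the segments $S_k$ that begin inside an earlier block and cross into later ones, (b) the possibility that $\min S_k > M(F_n)$, in which case the hypothesis gives no control and the contribution must instead be absorbed as a small term, and (c) the correct counting of the boundary factor $2$. Once the ``one block per functional (up to boundaries)'' reduction is in hand, the rest is Hölder's inequality and the elementary estimate in Remark \ref{remark inequality norm <= 1}(ii).
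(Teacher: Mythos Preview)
Your plan matches the paper's proof: lower bound by duality, upper bound by fixing $f=\sum_k b_k h_k\in G^\xi_{1,p}$, locating for each $h_k$ the starting block $j_{k,1}$ and (via hypothesis (i)) the at-most-one later block $j_{k,2}$ where $|h_k(x_j)|\ge\varepsilon_{j_{k,1}}$, treating those two as the ``large'' part (giving $2^{1/q}$ after H\"older and Remark~\ref{remark inequality norm <= 1}(ii)), and bounding the remaining tail by $\varepsilon$ using (ii). Two small corrections to your exposition: the factor $2^{1/q}$ is \emph{not} a consecutive-block boundary effect---it comes from each $h_k$ being charged to both $j_{k,1}$ and $j_{k,2}$, and $j_{k,2}$ can be arbitrarily far from $j_{k,1}$; and the $(i+1)$ in condition (ii) counts how many later \emph{blocks} a fixed $h_k$ can hit with strength in $[\varepsilon_i,\varepsilon_{i-1})$ (this is the paper's observation (b), obtained by applying hypothesis (i) at each level $i$), not how many functionals hit a fixed vector---your concern (b) is also vacuous, since if $\min S_k>M(F_n)$ then $h_k$ vanishes on all the $x_j$.
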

\begin{proof} The lower inequality follows easily from the definition of $G_{1,p}^{\xi}$. Let us first observe that if $(x_j)_j$ is a sequence with each $x_j\in F_j$, then, for any $j\in\N$ and any segment $S$ of $\T_\xi$ with $M(x_{j-1})<\min S\le M(x_j)$ and $f\in G_1^{\xi}$ with $\supp f = S$, the following hold due to (i).
	\begin{enumerate}
		\item[(a)] $\#\{i>j:|f(x_i)|\ge\varepsilon_j\}\le 1$.
		\item[(b)] $\#\{i>j:\varepsilon_{k-1}>|f(x_i)|\ge\varepsilon_k\}\le k$ for all $k>j$.
	\end{enumerate}
	Let $f=\sum_{i=1}^mb_if_i$ be in $G_{1,p}^{\xi}$ with $\supp f_i ={S_i}$, for $i=1,\ldots,m$. For each $i$, we will denote by $j_{i,1}$ the unique $1\le j\le n$ such that $M(x_{j_{i,1}-1})<\min S_i\le M(x_{j_{i,1}})$ and by $j_{i,2}$ the unique, if there exists, $j_{i,1}<j\le n$ such that $|f_{i}(x_{j_{i,2}})|\ge\varepsilon_{j_{i,1}}$. Denote by $f_{i,1}$ the restriction of $f_i$ to ${\range(x_{j_{i,1}})}\cap\range(x_{j_{i,2}})$ and set $f_{i,2}=f_i-f_{i,1}$ for $i=1,\ldots,m$, and $I_j=\{i:j=j_{i,1}\text{ or }j=j_{i,2}\}$ for $j=1,\ldots,n$. Note that, due to (a), each $i$ appears in $I_j$ for at most two $j$ and hence $\sum_{j=1}^n\sum_{i\in I_j}|b_i|^q\le 2$.\linebreak We thus calculate applying item (ii) of Remark \ref{remark inequality norm <= 1}
	\begin{align*}	
	\sum_{i=1}^mb_if_{i,1}\Big(\sum_{j=1}^na_jx_j\Big)&=\sum_{j=1}^na_j\sum_{i\in I_j}b_if_{i,1}(x_j)\le\Big(\sum_{j=1}^n|a_j|^p \Big)^{\frac{1}{p}}\Big(\sum_{j=1}^n\big|\sum_{i\in I_j}b_if_{i,1}(x_j)\big|^q \Big)^{\frac{1}{q}}\\&\le\Big(\sum_{j=1}^n|a_j|^p \Big)^{\frac{1}{p}}\Big(\sum_{j=1}^n\sum_{i\in I_j}|b_i|^q \Big)^{\frac{1}{q}}\le2^{\frac{1}{q}}\Big(\sum_{j=1}^n|a_j|^p \Big)^{\frac{1}{p}}.
	\end{align*}
	Finally, for each $j\in\N$, set $G_j=\{i:M(x_{j_{i,1}-1})<\min S_i\le M(x_{j_{i,1}})\}$. Note that, as follows from (b), $\#G_j\le r(F_j)$ and $|f_{i,2}(\sum_{k=1}^nx_k)|<\sum_{k=i}^\infty(k+1)\varepsilon_k$ for any $i\in G_j$. Hence (ii) yields that $\sum_{i=1}^m|f_{i,2}(\sum_{k=1}^nx_k)|<\varepsilon$ and we conclude that
	\[
	\Big|\sum_{i=1}^mb_if_{i,2}\Big(\sum_{j=1}^na_jx_j\Big)\Big|=\Big|\sum_{j=1}^na_j\sum_{i=1}^mb_if_{i,2}(x_j)\Big|<\varepsilon\Big(\sum_{j=1}^n|a_j|^p \Big)^{\frac{1}{p}}
	\]
	which along with the above calculation yield the desired result.
\end{proof}

\begin{prop}
	The space $JT^\xi_{1,p}$ admits a uniformly unique joint spreading model with respect to $\mathscr{F}_b(JT^\xi_{1,p})$,  equivalent to the unit vector basis of $\ell_p$.
\end{prop}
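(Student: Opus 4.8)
The plan is to show that every array $(x^i_j)_j$, $1\le i\le l$, of normalized block sequences in $JT^\xi_{1,p}$ that generates a joint spreading model $(e^i_j)_j$ satisfies, for all scalars,
\[
\Big(\sum_{i,j}|a_{ij}|^p\Big)^{1/p}\le\Big\|\sum_{i,j}a_{ij}e^i_j\Big\|\le C\Big(\sum_{i,j}|a_{ij}|^p\Big)^{1/p}
\]
for an absolute constant $C$ (in particular independent of $l$); this is precisely the assertion that the joint spreading model is uniformly equivalent to the unit vector basis of $\ell_p$. By $1$-unconditionality of the basis we assume throughout that $a_{ij}\ge 0$, and since a joint spreading model generated by an array is generated by any of its subarrays, we pass at the outset to a subarray with $\supp x^{i_1}_j<\supp x^{i_2}_{j+1}$ for all $i_1,i_2$ and $j$; then for every strict plegma family $(s_i)_{i=1}^l$ the vectors $x^i_{s_i(j)}$, listed lexicographically in $(j,i)$, are successive and form a single normalized block sequence $(w_r)_{r=1}^{ln}$.

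For the lower estimate, fix a strict plegma family and $\varepsilon>0$. Since each $x^i_{s_i(j)}$ is normalized, choose $f^{i,j}=\sum_k b^{i,j}_k g^{i,j}_k\in G^\xi_{1,p}$ with $g^{i,j}_k\in G^\xi_1$ disjointly supported, $\sum_k|b^{i,j}_k|^q\le1$, $\supp f^{i,j}\subseteq\range(x^i_{s_i(j)})$ and $f^{i,j}(x^i_{s_i(j)})\ge1-\varepsilon$. Set $c_{ij}=a_{ij}^{p-1}$, so that $\sum_{i,j}c_{ij}^q=\sum_{i,j}a_{ij}^p$. As the ranges $\range(x^i_{s_i(j)})$ are pairwise disjoint, all the $g^{i,j}_k$ have pairwise disjoint supports, and since $\sum_{i,j,k}|c_{ij}b^{i,j}_k|^q=\sum_{i,j}c_{ij}^q\sum_k|b^{i,j}_k|^q\le\sum_{i,j}a_{ij}^p$, the functional $f=\big(\sum_{i,j}a_{ij}^p\big)^{-1/q}\sum_{i,j}c_{ij}f^{i,j}$ lies in $G^\xi_{1,p}$. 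Evaluating $f$ on $\sum_{i,j}a_{ij}x^i_{s_i(j)}$ and using disjointness of supports gives $\|\sum_{i,j}a_{ij}x^i_{s_i(j)}\|\ge(1-\varepsilon)\big(\sum_{i,j}a_{ij}^p\big)^{1/p}$; letting $\varepsilon\to0$ and passing to the limit along plegma families yields the lower estimate with constant $1$, uniformly in $l$.

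For the upper estimate we bring in the combinatorial results of this section. The plan is to refine the subarray once more so that, in the spirit of Lemmas \ref{comb lem ell2 subsequence} and \ref{no2 comb lem ell2 subsequence} but carried out simultaneously over all $l$ rows, there is a strictly decreasing null sequence $(\varepsilon_r)$ of positive reals with the property that, for every strict plegma family and every $g\in G^\xi_1$ whose support begins inside the range of $w_r$, there is at most one $r'>r$ with $|g(w_{r'})|\ge\varepsilon_r$, with the analogous bound $\#\{r'>r:\varepsilon_k\le|g(w_{r'})|<\varepsilon_{k-1}\}\le k$ for $k>r$, and with $\sum_r\#\big((\max\supp w_{r-1},\max\supp w_r]\big)\sum_{k\ge r}(k+1)\varepsilon_k<\varepsilon$. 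Such a refinement is produced by a Ramsey argument in the style of Lemma \ref{comb lem ell2 subsequence}: a failure would, after diagonalizing over the finitely many choices of triples of rows, yield a single $g\in G^\xi_1$ that is large on infinitely many normalized block vectors, contradicting well-foundedness of $\T_\xi$ (which forces $\supp g$ to be finite) together with Remark \ref{remark inequality norm <= 1}(i). Granting this, let $f=\sum_k b_k g_k\in G^\xi_{1,p}$ be a norming functional. Exactly as in the proof of Proposition \ref{upper ell2 for ground set}, split each $g_k$ into a head piece supported on the one or two vectors $w_r$ on which it is $\varepsilon$-large (the column where it begins, together with the at most one later column where it stays large) and a tail piece whose restriction to each $w_r$ is small; applying Remark \ref{remark inequality norm <= 1}(ii) on each $w_r$ separately to the head pieces — whose $\ell_q$-weights are thereby charged to at most two indices $r$ each, so that the head contribution is at most $2^{1/q}\big(\sum_{i,j}a_{ij}^p\big)^{1/p}$ — and the last displayed estimate to the tail pieces, one obtains $\|\sum_{i,j}a_{ij}x^i_{s_i(j)}\|\le(2^{1/q}+\varepsilon)\big(\sum_{i,j}a_{ij}^p\big)^{1/p}$ for all strict plegma families with $s_1(1)$ sufficiently large. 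Passing to the limit and letting $\varepsilon\to0$ gives the upper estimate with $C=2^{1/q}$, again uniformly in $l$.

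The step I expect to be the main obstacle is precisely the uniformity in $l$ of the upper estimate. A direct use of the column-wise combinatorial lemma (applied to the block family $\{x^i_j:1\le i\le l\}$) only prevents a single long segment from being large on vectors of more than one later column, while still permitting it to be large on all $l$ vectors of that column; this degrades the constant to order $l^{1/q}$. The point of performing the Ramsey refinement at the level of the entire array is exactly to bound, uniformly over all choices of rows, the number of plegma-selected block vectors on which a single segment can be large, so that $(w_r)$ behaves, for the purposes of Proposition \ref{upper ell2 for ground set}, like the block sequence of a single row. Everything else is a routine adaptation of the arguments already established for $JT^\xi_{1,p}$ in this section.
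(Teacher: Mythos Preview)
Your overall strategy matches the paper's: pass to a subarray with successive supports, establish the lower $\ell_p$-estimate directly from the definition of $G^\xi_{1,p}$, and obtain the upper estimate by invoking the combinatorial lemmas together with Proposition \ref{upper ell2 for ground set}. The lower estimate is fine.

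The place where you diverge is in your diagnosis of the ``main obstacle''. You claim that applying Lemmas \ref{comb lem ell2 subsequence}--\ref{no2 comb lem ell2 subsequence} to the block family $F_j=\{x^i_j:1\le i\le l\}$ only yields a constant of order $l^{1/q}$, because a segment might be large on all $l$ vectors of a single column, and you therefore propose a finer Ramsey refinement ``at the level of the entire array''. This concern is misplaced. In a \emph{strict} plegma family $(s_i)_{i=1}^l\in S\text{-}Plm_l([L]^k)$ the indices $s_i(j)$, $1\le i\le l$, $1\le j\le k$, are pairwise distinct, so the selected vectors $x^i_{s_i(j)}$ come from pairwise distinct columns $F_{s_i(j)}$: each column contributes \emph{at most one} vector to $(w_r)_r$. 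Consequently, condition (i) of Proposition \ref{upper ell2 for ground set} for the family $(F_j)_j$ already says that any $g\in G^\xi_1$ whose segment begins before $M(F_{m})$ is large on vectors from at most one later column --- hence on at most one later $w_r$. Proposition \ref{upper ell2 for ground set} therefore applies verbatim to the sequence $(w_r)_r$ and gives the upper bound $(2^{1/q}+\varepsilon)$, uniformly in $l$, with no further work.

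This is exactly what the paper does: it applies Lemma \ref{no2 comb lem ell2 subsequence} to the family $F_j=\{x^i_j:1\le i\le l\}$ (legitimate since $\sup_j\#F_j=l<\infty$), and then reads off the two-sided estimate from Proposition \ref{upper ell2 for ground set}, because every strict plegma selection is a choice of one vector per column. Your proposed extra Ramsey step is unnecessary, and as written it is also not fully specified (you do not explain how a single refinement of the array could control, simultaneously for \emph{all} plegma families, the thresholds $\varepsilon_r$ indexed by the plegma-dependent enumeration $(w_r)$; the block-family formulation is precisely the device that makes this uniform).
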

\begin{proof}
	Let $(x^1_j)_j,\ldots,(x^l_j)_j$ be normalized block sequences in $JT^\xi_{1,p}$ and let $\varepsilon>0$.\linebreak Applying Lemma \ref{no2 comb lem ell2 subsequence} and passing to a subsequence, we may assume that $F_j=\{x^i_j:i=1,\ldots,l\}$ is a normalized block family in $JT^\xi_{1,p}$ satisfying items (i) and (ii) of Proposition \ref{upper ell2 for ground set}.  Then, for every $k\in\N$, every  $s=(s_i)_{i=1}^l$ in $S$-$Plm_l([L]^k)$ and any choice of scalars $(a_{ij})_{i=1,j=1}^{l,k}$, we calculate
	\[
	\Big(\sum_{i=1}^l\sum_{j=1}^k|a_{ij}|^p\Big)^{\frac{1}{p}}\le\Big\|\sum_{i=1}^l\sum_{j=1}^ka_{ij}x^i_{s_i(j)}\Big\|\le (2^{\frac{1}{q}}+\varepsilon)\Big(\sum_{i=1}^l\sum_{j=1}^k|a_{ij}|^p\Big)^{\frac{1}{p}}.
	\]
	A diagonal argument then yields that there exists $L\in[\N]^\infty$ such that $((x^i_j)_{j\in L})_{i=1}^l$ generates a joint spreading model $2^\frac{1}{q}$-equivalent to the unit vector basis of $\ell_p$.
\end{proof}

\begin{prop}
			The space $JT^\xi_{1,p}$  is reflexive.
\end{prop}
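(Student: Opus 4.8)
The plan is to follow the same two-pronged strategy used for the reflexivity proofs of $T^\xi_{inc}$ and $T^\xi_{ess\-inc}$: show that $JT^\xi_{1,p}$ contains neither $c_0$ nor $\ell_1$, and then invoke the classical James characterization of reflexivity for spaces with an unconditional basis (see \cite[Theorem 1.c.10]{LT} for the $c_0$ part and \cite[Theorem 2]{J1} together with \cite[Theorem 1]{J1} for the $\ell_1$ part). First I would observe that $(e_j)_j$ is a $1$-unconditional basis for $JT^\xi_{1,p}$, so reflexivity reduces to showing the basis is both boundedly complete and shrinking, equivalently that the space contains no isomorphic copy of $c_0$ or $\ell_1$.

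For the $c_0$ part, I would argue that $JT^\xi_{1,p}$ has a lower $\ell_p$-estimate on block sequences: any normalized block sequence $(x_j)_j$ satisfies $\|\sum a_j x_j\|\ge (\sum|a_j|^p)^{1/p}$, simply because for each $j$ one can pick $f_j\in G_1^\xi$ with $f_j(x_j)$ close to $1$ and with $\supp f_j\subset\range(x_j)$, and then $\sum_j b_j f_j\in G_{1,p}^\xi$ whenever $\sum|b_j|^q\le 1$; choosing $b_j$ appropriately (and using that the $f_j$ have disjoint supports) gives the lower $\ell_p$ bound, $p>1$, which rules out $c_0$. Alternatively, since $JT^\xi_{1,p}$ admits $\ell_p$ as a uniformly unique spreading model (a consequence of the previous proposition, restricting to arrays with $l=1$), the basis is boundedly complete, so $c_0$ does not embed.

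For the $\ell_1$ part, the key tool is Proposition \ref{upper ell2 for ground set}: it shows that after passing to an appropriate subsequence, any normalized block sequence drawn from a normalized block family with uniformly bounded cardinalities satisfies an upper $\ell_p$-estimate with constant close to $2^{1/q}$. Concretely, I would suppose toward a contradiction that $\ell_1$ embeds; by James's distortion theorem there is a normalized block sequence $(x_j)_j$ with $\|\sum_{j} a_j x_j\|\ge(1-\varepsilon)\sum|a_j|$ for all scalars. Applying Lemma \ref{no2 comb lem ell2 subsequence} to the normalized block family $F_j=\{x_j\}$ (so $\sup_j \#F_j=1<\infty$) and passing to the resulting subsequence, Proposition \ref{upper ell2 for ground set} gives $\|\sum_{j=1}^n a_j x_j\|\le (2^{1/q}+\varepsilon)(\sum|a_j|^p)^{1/p}$ for that subsequence. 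Taking $a_j=1$ for $j=1,\dots,n$ we get $(1-\varepsilon)n\le (2^{1/q}+\varepsilon)n^{1/p}$, i.e. $(1-\varepsilon)n^{1/q}\le 2^{1/q}+\varepsilon$, which fails for large $n$ since $q<\infty$. This contradiction shows $\ell_1$ does not embed, so the basis is shrinking.

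The main obstacle is really already dispatched by the preparatory lemmas: the combinatorial content of ruling out $\ell_1$ — the fact that a single $G_1^\xi$-functional cannot be large on infinitely many members of a block sequence because $\T_\xi$ is well-founded (Remark \ref{remark comb lem pointwise limit} and Lemma \ref{comb lem ell2 subsequence}) — is exactly what powers Proposition \ref{upper ell2 for ground set}. So the proof itself is short: assemble the $c_0$ exclusion from bounded completeness, assemble the $\ell_1$ exclusion from James distortion plus the upper $\ell_p$-estimate, and conclude reflexivity via James. I would write it in a couple of lines mirroring the closing arguments of the $T^\xi_{inc}$ and $T^\xi_{ess\-inc}$ reflexivity proofs, citing \cite{J1} and \cite[Theorem 1.c.10]{LT}, and I would explicitly flag that the only place the hypothesis $p>1$ (equivalently $q<\infty$) is used is in making the $n^{1/q}\to\infty$ step work.
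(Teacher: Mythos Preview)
Your proposal is correct and follows essentially the same approach as the paper: unconditional basis plus no $c_0$ (via bounded completeness from the lower $\ell_p$-estimate/unique $\ell_p$ spreading model) plus no $\ell_1$ (via the upper $\ell_p$-estimate of Proposition \ref{upper ell2 for ground set}), then James \cite{J1}. The paper's proof is just the two-line version of this; your invocation of James distortion for the $\ell_1$ part is unnecessary but harmless, and note that in your first $c_0$ argument the norming functional for $x_j$ should be taken in $G_{1,p}^\xi$ rather than $G_1^\xi$ (your alternative via bounded completeness avoids this quibble).
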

\begin{proof}
	Note that the unit vector basis of $c_{00}(\N)$ forms a boundedly complete unconditional Schauder basis for $JT^\xi_{1,p}$, that is, it does not contain $c_0$. Moreover, Proposition \ref{upper ell2 for ground set} yields that it does not contain $\ell_1$ and hence Theorem 2 from \cite{J1} yields the desired result.
\end{proof}

\begin{prop}
	The space $JT^\xi_{1,p}$ is not Asymptotic $\ell_p$.
\end{prop}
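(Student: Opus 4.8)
The plan is to follow the proof that $T^\xi_{inc}$ is not Asymptotic $\ell_1$, with the exponent $1$ replaced by $p$. Suppose, towards a contradiction, that $JT^\xi_{1,p}$ is $C$-Asymptotic $\ell_p$ and fix $n\in\N$ with $n>C^q$, equivalently $Cn^{-1/q}<1$. Since $JT^\xi_{1,p}$ is reflexive, by \cite[Lemma 5.18]{AGLM} we may assume that in the game $G(n,p,C)$ player (S) only selects tail subspaces $Y_k=\cspn\{e_j:j\ge m_k\}$. Taking the role of player (V), the aim is to answer each move with a basis vector $e_{j_k}\in Y_k$ in such a way that $S=\{j_1<\cdots<j_n\}$ is a segment of $\T_\xi$.

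First I would check that such a play is available against tail subspaces. This is the same combinatorial fact used in the proof that $T^\xi_{inc}$ is not Asymptotic $\ell_1$: because $\T_\xi$ is infinite branching and of height $\omega^\xi\ge\omega$, for every $m\in\N$ there is a node $j_1\ge m$ such that every maximal segment of $\T_\xi$ with minimum $j_1$ has length at least $n$; and every non-maximal node of $\T_\xi$ has immediate successors of arbitrarily large index (using that $\N=\cup_jN_j$ is a partition into infinite sets together with standard properties of the Schreier families and of $\T_\xi$). So player (V) chooses $j_1\ge m_1$ as above, and once $j_1<\cdots<j_k$ have been selected (for $k<n$) responds to $Y_{k+1}$ by picking an immediate successor $j_{k+1}\ge m_{k+1}$ of $j_k$. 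The resulting set $S=\{j_1,\dots,j_n\}$ is then a segment of $\T_\xi$, and $e_{j_k}\in Y_k$ for every $k$.

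Finally I would run the norm estimate. The functional $f=\sum_{i\in S}e^*_i$ lies in $G^\xi_1\subseteq G^\xi_{1,p}$, so $\big\|\sum_{k=1}^ne_{j_k}\big\|\ge f\big(\sum_{k=1}^ne_{j_k}\big)=n$, while the triangle inequality (together with $\|e_j\|=1$, which holds since $e^*_j\in G^\xi_1$) gives the reverse bound; hence $\big\|\tfrac1n\sum_{k=1}^ne_{j_k}\big\|=1$. On the other hand, since player (S) was following a winning strategy, the outcome $(e_{j_k})_{k=1}^n$ must be $C$-equivalent to the unit vector basis of $\ell_p^n$, so
\[
1=\Big\|\frac1n\sum_{k=1}^ne_{j_k}\Big\|\le C\Big(\sum_{k=1}^n\frac1{n^p}\Big)^{1/p}=Cn^{1/p-1}=Cn^{-1/q}<1,
\]
a contradiction. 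I expect the only step with any real content to be the combinatorial one in the second paragraph — that player (V) can realize a full segment of the basis while respecting the tail subspaces — and this is handled exactly as in the $T^\xi_{inc}$ case; the remainder is bookkeeping with the definition of $G^\xi_{1,p}$ and the identity $1/p-1=-1/q$.
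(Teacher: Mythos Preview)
Your proof is correct and follows essentially the same approach as the paper: both argue by contradiction, invoke reflexivity to reduce to tail subspaces, have player (V) select basis vectors along a segment of $\T_\xi$ (exactly as in the $T^\xi_{inc}$ case), and then derive a contradiction from the fact that $(e_{j_k})_{k=1}^n$ is isometric to the $\ell_1^n$ basis. The only cosmetic difference is normalization: the paper applies the estimate to $n^{-1/p}\sum_k e_{j_k}$ and compares $n^{1/q}$ with $C$, whereas you apply it to $n^{-1}\sum_k e_{j_k}$ and compare $1$ with $Cn^{-1/q}$; these are algebraically equivalent.
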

\begin{proof}
	Suppose that $JT^\xi_{1,p}$ is $C$-Asymptotic $\ell_p$ and let $n\in\N$ be such that $C\le n^\frac{1}{q}$. Then, following the same arguments as in Proposition \ref{NOT Asymptotic l1}, in the final outcome of $G(n,p,C)$ we, as player (V), have chosen elements of the basis $e_{j_1},\ldots,e_{j_n}$ such that $\{j_1,\ldots,j_n\}$ is a segment of $\T_\xi$ and hence $\{e_{j_1},\ldots,e_{j_n}\}$ is isometric to $\ell_1^n$. We then calculate
	\[
	\Big\| {n^{-\frac{1}{p}}}\sum_{i=1}^n e_{j_i}\Big\|=\Big\| {n^{-\frac{1}{p}}}\sum_{i=1}^n e_{j_i}\Big\|_{G_1^\xi}=n^\frac{1}{q}
	\]
	whereas, since $JT^\xi_{1,p}$ is $C$-Asymptotic $\ell_p$, we have that
	\[
	\Big\| {n^{-\frac{1}{p}}}\sum_{i=1}^n e_{j_i}\Big\|\le C
	\]
	and this is a contradiction.
\end{proof}

\begin{rem}
	We may also define a conditional version of $JT^\xi_{1,p}$,  denoted as $JT^\xi_{p}$, by replacing the norming set $G_1^\xi$ with
	\[
	G^\xi_{sum}=\Big\{\sum_{i \in S}e^*_i:S\text{ is a segment of } \T_\xi \Big\}.
	\]
	Note that the above results hold for $JT^\xi_{p}$.  For the reflexivity part, notice that it suffices to show that $(e_j)_j$ is shrinking for $JT^\xi_{p}$.  If not, then there is an $x^*\in (JT^\xi_{p})^{*}\setminus\overline{\text{span}}\{e^*_j \}_{j=1}^\infty$ and  an $x^{**}\in (JT^\xi_{p})^{**}$ with $x^{**}(e^*_j)=0$ for all $j\in\N$ and $x^{**}(x^*)=1$. Then, from Odell-Rosenthal Theorem \cite{OR} and the fact that $x^{**}(e^*_j)=0$, $j\in\N$, we may find a seminormalized block sequence $(x_j)_j$ in $JT^\xi_{p}$ with $w^*$-$\lim_jx_j=x^{**}$ and, passing to a subsequence, we may assume that it also satisfies items (i) and (ii) of Proposition \ref{upper ell2 for ground set} for some $\varepsilon>0$. Since $x^{**}(x^*)=1$, there exists $n_0\in\N$ such that $x^*(x_n)\ge1/2$ for all $n\ge n_0$. Then, for $k\in\N$ such that $(2^\frac{1}{q}+\varepsilon)k^{-\frac{1}{q}}<1/2$, Proposition \ref{upper ell2 for ground set} yields that
		\[
		x^*\Big( \frac{x_{n_0+1}+\ldots+x_{n_0+k}}{k}\Big)\le(2^\frac{1}{q}+\varepsilon)k^{-\frac{1}{q}}
		\]
		which is a contradiction.
	\end{rem}
	
	\begin{rem}
		Note that by replacing the norming set $G_1^\xi$ with
		\[
		G_r^{\xi}=\Big\{\sum_{i\in S}b_ie^*_i:S\text{ is a segment of }\T_\xi\text{ and }\sum_{i \in S}|b_i|^{r'}\le1 \Big\}
		\]		
		where $r^{-1}+r'^{-1}=1$ and $1<r<p$, we define the spaces $JT^\xi_{r,p}$  whose norm is described in \eqref{jtqp}. These spaces are also reflexive, admit a  unique $\ell_p$ asymptotic model and are not Asymptotic $\ell_p$.
	\end{rem}

	\begin{rem}
	The approach used in \cite{BLMS} can be used to show that the spaces ${JT}^\xi_{r,p}$ and ${JT}^\xi_{p}$ have the property that any joint spreading model generated by an array of weakly null sequences is isometrically equivalent to the unit vector basis of $\ell_p$. That approach provides less insight and has no potential to apply to cases with a non-isometric result, e.g., the space from Section \ref{ell1section}.
	\end{rem}
	
\section*{Acknowledgement}
The authors would like to thank the anonymous referees whose useful remarks helped us identify and correct a mistake in the original proof of Lemma \ref{combinatorial lemma essentially incomparable}.

\end{document}